\numberwithin{equation}{section}
\newcommand{\CC}{\mathbb{C}}
\newcommand{\RR}{\mathbb{R}}
\newcommand{\ZZ}{\mathbb{Z}}
\newcommand{\PP}{\mathbb{P}}
\newcommand{\EE}{\mathbb{E}}
\newcommand{\LL}{\mathbb{L}}
\newcommand{\calS}{\mathcal{S}}
\newcommand{\SP}{\mathcal{P}}
\newcommand{\SD}{\mathcal{D}}
\newcommand{\SO}{\mathcal{O}}
\newcommand{\SM}{\mathcal{M}}
\newcommand{\SN}{\mathcal{N}}
\newcommand{\SL}{\mathcal{L}}
\newcommand{\SE}{\mathcal{E}}
\newcommand{\SF}{\mathcal{F}}
\newcommand{\bfD}{\mathbf{D}}
\newcommand{\bfQ}{\mathbf{Q}}
\newcommand{\bfE}{\mathbf{E}}
\newcommand{\sfE}{\mathsf{E}}
\newcommand{\sfL}{\mathsf{L}}
\newcommand{\rmI}{\mathrm{I}}
\newcommand{\rmb}{\mathrm{b}}
\newcommand{\rmR}{{\mathrm{R}}}
\newcommand{\rmE}{{\mathrm{E}}}
\newcommand{\rmc}{\mathrm{c}}
\newcommand{\op}{\mathrm{op}}
\newcommand{\an}{\mathrm{an}}
\newcommand{\id}{\mathrm{id}}
\DeclareMathOperator{\Ker}{Ker}
\DeclareMathOperator{\rank}{rk}
\DeclareMathOperator{\supp}{supp}
\DeclareMathOperator{\Char}{char}
\DeclareMathOperator{\GL}{GL}
\DeclareMathOperator{\CF}{CF}
\renewcommand{\Re}{\operatorname{Re}}
\newcommand{\Dotimes}{\overset{D}{\otimes}}
\newcommand{\Potimes}{\overset{+}{\otimes}}
\newcommand{\rhom}{{\rm R}{\mathcal{H}}om}
\newcommand{\shom}{{\mathcal{H}}om}
\newcommand{\rsect}{{\mathrm{R}}\Gamma}
\newcommand{\SolE}{Sol^{\mathrm{E}}}
\newcommand{\ssupp}{\operatorname{sing}\!.\!\operatorname{supp}}
\newcommand{\BDC}{{\mathbf{D}}^{\mathrm{b}}}
\newcommand{\Mod}{\mathrm{Mod}}
\newcommand{\Db}{{\bf D}^{\mathrm{b}}}
\newcommand{\Dbc}{{\bf D}_{\mathrm{c}}^{\mathrm{b}}}
\newcommand{\Modcoh}{\mathrm{Mod}_{\mathrm{coh}}}
\newcommand{\Modhol}{\mathrm{Mod}_{\mathrm{hol}}}
\newcommand{\Modrh}{\mathrm{Mod}_{\mathrm{rh}}}
\newcommand{\BDCcoh}{{\mathbf{D}}^{\mathrm{b}}_{\mathrm{coh}}}
\newcommand{\BDChol}{{\mathbf{D}}^{\mathrm{b}}_{\mathrm{hol}}}
\newcommand{\BDCrh}{{\mathbf{D}}^{\mathrm{b}}_{\mathrm{rh}}}
\newcommand{\BEC}{{\mathbf{E}}^{\mathrm{b}}}
\newcommand{\ECrc}{\mathbf{E}_{\mathbb{R}{\text -}\mathrm c}}
\newcommand{\BECrc}{\mathbf{E}_{\mathbb{R}{\text -}\mathrm c}^{\mathrm{b}}}
\renewcommand{\(}{\left(}
\renewcommand{\)}{\right)}
\newcommand{\bs}{\left.\right\backslash} 
\newcommand{\vbar}{\left.\right|} 
\DeclareRobustCommand{\longtwoheadrightarrow}{\relbar\joinrel\twoheadrightarrow} 
\DeclarePairedDelimiter{\abs}{\lvert}{\rvert} 
\DeclarePairedDelimiter{\innpro}{\langle}{\rangle} 
\DeclarePairedDelimiterX{\Set}[2]{\lbrace}{\rbrace}{#1\ \delimsize\vert\ #2} 
\newcommand{\simto}{\overset{\sim}{\longrightarrow}}
\newcommand{\longhookrightarrow}{\lhook\joinrel\longrightarrow}
\newcommand{\var}[1]{\overline{#1}}
\newcommand{\tl}[1]{\widetilde{#1}}
\newcommand{\inj}{``\varinjlim"} 
\newcommand{\Modleq}{\mathrm{Mod}_{\mathrm{hol}}^{\leq1}} 
\newcommand{\CCE}{\mathbb{C}^\mathrm{E}} 
\newcommand{\F}{\mathcal{F}}
\newcommand{\G}{\mathcal{G}}
\renewcommand{\(}{\left(}
\renewcommand{\)}{\right)}
\renewcommand{\dim}{{\rm dim}}
\newcommand{\dsum}{\displaystyle \sum}
\newcommand{\dint}{\displaystyle \int}
\newcommand{\dprod}{\displaystyle \prod}
\newtheorem{theorem}{Theorem}[section]
\newtheorem{corollary}[theorem]{Corollary}
\newtheorem{lemma}[theorem]{Lemma}
\newtheorem{proposition}[theorem]{Proposition}
\theoremstyle{definition}
\newtheorem{definition}[theorem]{Definition}
\theoremstyle{remark}
\newtheorem{remark}[theorem]{\sc Remark}
\newtheorem{example}[theorem]{\sc Example}
\title{On the monodromies at infinity of Fourier transforms
of holonomic {$\SD$}-modules
\footnote{{\bf 2010 Mathematics Subject Classification:
}32C38, 32S40, 34M35, 34M40, 35A27}}
\author{Kazuki KUDOMI
\footnote{Mathematical Institute, Tohoku University,
Aramaki Aza-Aoba 6-3, Aobaku, Sendai, 980-8578, Japan.
E-mail: kazuki.kudomi.q3@dc.tohoku.ac.jp}
and Kiyoshi TAKEUCHI
\footnote{Mathematical Institute, Tohoku University,
Aramaki Aza-Aoba 6-3, Aobaku, Sendai, 980-8578, Japan.
E-mail: takemicro@nifty.com} }
\begin{document}

\maketitle

\begin{abstract}
Based on the recent progress in the irregular
Riemann-Hilbert correspondence,
we study the monodromies at infinity of
the holomorphic solutions of
Fourier transforms of holonomic
D-modules in some situations.
Formulas for their eigenvalues
are obtained by applying the theory of
monodromy zeta functions to
our previous results on the enhanced
solution complexes of the Fourier transforms.
In particular, in dimension one we thus find
a reciprocity law between
the monodromies at infinity of holonomic D-modules
and their Fourier transforms.
\end{abstract}

\section{Introduction}\label{intro}

First of all 
we recall the definition of 
Fourier transforms of algebraic $\SD$-modules. 
Let $X=\CC_z^N$ be a complex vector space 
and $Y=\CC_w^N$ its dual. 
We regard them as algebraic varieties and 
use the notations $\SD_X$ and $\SD_Y$ for 
the sheaves of rings of ``algebraic'' differential operators on them. 
Denote by $\Modcoh(\SD_X)$ (resp. $\Modhol(\SD_X)$, $\Modrh(\SD_X)$) 
the category of coherent (resp. holonomic, regular holonomic) 
$\SD_X$-modules on $X$. 
Let $W_N := \CC[z, \partial_z]\simeq\Gamma(X; \SD_X)$ and 
$W^\ast_N := \CC[w, \partial_w]\simeq\Gamma(Y; \SD_Y)$ 
be the Weyl algebras over $X$ and $Y$, respectively. 
Then by the ring isomorphism 
\begin{equation*}
W_N\simto W^\ast_N\hspace{30pt}
(z_i\mapsto-\partial_{w_i},\ \partial_{z_i}\mapsto w_i) 
\end{equation*}
we can endow a left $W_N$-module $M$ with 
a structure of a left $W_N^\ast$-module.
We call it the Fourier transform of $M$ 
and denote it by $M^\wedge$. 
For a ring $R$ we denote by $\Mod_f(R)$ 
the category of finitely generated $R$-modules. 
Recall that for the affine algebraic varieties $X$ 
and $Y$ we have the equivalences of categories 
\begin{align*} 
\Modcoh(\SD_X)
&\simeq
\Mod_f(\Gamma(X; \SD_X)) = \Mod_f(W_N),\\
\Modcoh(\SD_Y)
&\simeq
\Mod_f(\Gamma(Y; \SD_Y)) = \Mod_f(W^\ast_N)
\end{align*}
(see e.g. \cite[Propositions 1.4.4 and 1.4.13]{HTT08}).
For a coherent $\SD_X$-module $\SM\in\Modcoh(\SD_X)$
we thus can define its Fourier 
transform $\SM^\wedge\in\Modcoh(\SD_Y)$. 
For $m,n>$ let $M(m,n,W_N)$ (resp. $M(m,n,W_N^*)$) be the 
set of $m \times n$ matrices with entries in 
$W_N$ (resp. $W_N^*$). Then by the Fourier transform 
the system $P \vec{u} = \vec{0}$ of partial 
differential equations for a matrix 
$P=(P_{ij}) \in M(m,n,W_N)$ on $X$ corresponds to the one 
$P^{\wedge} \vec{v} = \vec{0}$ for 
$P^{\wedge}=(P_{ij}^{\wedge}) \in M(m,n,W_N^*)$ on $Y$.
There exists also an equivalence of categories
\begin{equation*}
( \cdot )^\wedge : \Modhol(\SD_X)\simto \Modhol(\SD_Y)
\end{equation*}
between the categories of holonomic $\SD$-modules 
(see e.g. \cite[Proposition 3.2.7]{HTT08}).

Although this definition of Fourier transforms of $\SD$-modules 
is simple, it is very hard to describe their basic 
properties. In particular, we do not 
know so far much of their global properties, 
such as the monodromies and the Stokes
matrices of their holomorphic solutions, even in
the simplest case $N=1$ of dimension one (see
for example, D'Agnolo-Hien-Morando-Sabbah
\cite{DHMS20} and Hohl \cite{Hoh22}). 
For the results in higher dimensions, see
Brylinski \cite{Bry86}, Daia \cite{Dai00},
Ito-Takeuchi \cite{IT20a}, \cite{IT20b}, 
Kashiwara-Schapira \cite {KS97} 
and Takeuchi \cite{Tak22}. After some
pioneering works by Malgrange in \cite{Mal88} and
\cite{Mal91}, inspired from the theory of
Fourier transforms of $l$-adic sheaves in positive
characteristic, Bloch-Esnault \cite{BE04b} and
Garc{\'i}a L{\'o}pez \cite{Gar04} introduced
the so-called local Fourier transforms of
algebraic holonomic
$\SD$-modules $\SM$ on the affine line $X= \CC_z$.
Then in the one dimensional case $N=1$ their new
method enables us to describe the formal structure
i.e. the exponential factors of the Fourier transform
$\SM^\wedge$ of $\SM$ in terms of that of $\SM$ (for 
an explicit description, see Fang \cite{Fan09}, 
Graham-Squire \cite{Gra13} 
and Sabbah \cite{Sab08}). Subsequently, based on this
result, Mochizuki \cite{Mochi10}, \cite{Mochi18} gave also
a description of the Stokes structure of $\SM^\wedge$.
We thus now know that the exponential factors
of $\SM^\wedge$ are obtained by the Legendre
transform from those of $\SM$. We call it
the stationary phase method. Later, by using
the new theories of the irregular Riemann-Hilbert
correspondence established by D'Agnolo-Kashiwara
\cite{DK16} and the enhanced Fourier-Sato transforms
of Kashiwara-Schapira \cite{KS16a} adapted to it, in \cite{DK18} and \cite{DK23}
D'Agnolo and Kashiwara reformulated and reproved it more elegantly. 
See also Kudomi-Takeuchi \cite{KT23} for a recent 
improvement of their results and some new applications. 

In this paper, we study the monodromies at infinity of 
the holomorphic solutions of Fourier transforms of 
holonomic $\SD$-modules. First in the case $N=1$,  
we prove a reciprocity law between 
the monodromies at infinity of algebraic holonomic $\SD$-modules on 
the affine line $X=\CC_z$ and their Fourier transforms. 
For an algebraic holonomic $\SD$-modules $\SM$ on $X=\CC_z$ 
we take a large enough $R\gg0$ such that 
\begin{equation}
\ssupp(\SM)\subset\{z\in X^{\an} =\CC\mid\abs{z}<R\}, 
\end{equation} 
where $X^{\an} \simeq \CC$ stands for 
the underlying complex manifold of $X$. 
Then the solution complex $Sol_X(\SM)\in\Dbc(\CC_{X^\an})$ of $\SM$ 
is a local system on the large circle 
\begin{equation}
C_R\coloneq\{z\in X^{\an} =\CC\mid\abs{z}=R\}
\quad \subset X^{\an} =\CC
\end{equation} 
and we can define the monodromy at infinity of 
(the holomorphic solutions) to $\SM$
as follows.
Take a point $a\in C_R$ of $C_R$ and let $\Psi_a$ be 
the $\CC$-linear automorphism of the stalk 
\begin{equation}
Sol_X(\SM)_a\simeq
\shom_{\SD_{X^\an}}(\SM^\an,\SO_{X^\an})_a
\end{equation}
at $a$ induced by the analytic continuations along the loop $C_R$ 
in the ``counterclockwise" direction. 
We denote by $\rank\SM$ the generic rank of 
the holonomic $\SD_X$-modules $\SM$ so that we have 
$Sol_X(\SM)_a\simeq\CC^{\rank{\SM}}$.
Then the square matrix $A$ of size $\rank\SM$ representing 
the automorphism $\Psi_a$ is defined up to conjugacy. 
We call it the monodromy (matrix) at infinity of $\SM$.
It is clear that the conjugacy class of $A$ in 
$\GL_{\rank\SM}(\CC)$ does not depend on 
the choices of $R\gg0$ and $a\in C_R$.
Similarly, also for the Fourier transform $\SM^\wedge$ of 
$\SM$ we define its monodromy at infinity.
In order to relate the monodromy 
at infinity of $\SM$ to that of $\SM^\wedge$, 
we require the following condition at infinity.
Let $\var{X}\simeq\PP^1$ be 
the projective compactification of $X=\CC$ and 
$i_X\colon X\hookrightarrow\var{X}$ the inclusion map.
We set $\tl{\SM}\coloneq\bfD i_{X\ast}\SM\simeq i_{X\ast}\SM
\in\Modhol(\SD_{\var{X}})$.
Note that $\tl{\SM}$ is an algebraic meromorphic connection 
on a (Zariski) open neighborhood of the point 
$\infty\in\var{X}=\PP^1$.

\begin{definition}
We say that the algebraic holonomic $\SD_X$-module $\SM$ 
on $X=\CC$ has a moderate irregularity at infinity 
if the pole order of any exponential factor of 
$\tl{\SM}$ at $\infty\in\var{X}=\PP^1$ is $\leq1$. 
\end{definition}
We denote by $\Modleq(\SD_X)$ the full subcategory of 
$\Modhol(\SD_X)$ consisting of holonomic $\SD_X$-modules 
with a moderate irregularity at infinity.
Then by the multiplicity test functor 
in \cite[Section 6.3]{DK18} we see that 
it is an abelian subcategory of $\Modhol(\SD_X)$.
It is clear that $\Modrh(\SD_X)$ is a subcategory of $\Modleq(\SD_X)$.
Moreover by the classical stationary phase method, 
we can easily show that $\SM\in\Modhol(\SD_X)$ has 
a moderate irregularity at infinity 
if and only if $\SM^\wedge\in\Modhol(\SD_Y)$ has it. 
Thus it turns out that the Fourier transform $( \cdot )^\wedge$ induces 
an equivalence of categories 
\begin{equation}
( \cdot )^\wedge\colon\Modleq(\SD_X)\overset{\sim}{\longrightarrow}
\Modleq(\SD_Y).
\end{equation}
For a non-zero complex number $\lambda\in\CC^\ast=\CC\bs\{0\}$ 
we denote by $\mu (\SM,\lambda)$ (resp. $\mu (\SM^\wedge,\lambda))$ 
the multiplicity of the eigenvalue 
$\lambda$ in the monodromy at infinity of $\SM$ (resp. $\SM^\wedge$).
Then our first main theorem in this paper is 
the following reciprocity law between the monodromies at infinity of 
$\SM$ and $\SM^\wedge$.

\begin{theorem}\label{MT-1} 
Assume that the holonomic $\SD_X$-module $\SM\in\Modhol(\SD_X)$ has 
a moderate irregularity at infinity i.e. $\SM \in \Modleq(\SD_X)$.  
Then for any non-zero complex number 
$\lambda\in\CC^\ast=\CC\bs\{0\}$ such that $\lambda\neq1$ we have an equality
\begin{equation}
\mu (\SM,\lambda)= \mu (\SM^\wedge,\lambda^{-1}).
\end{equation}
\end{theorem}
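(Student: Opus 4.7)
The strategy is to translate the equality of multiplicities into an identity of monodromy zeta functions at infinity, and to compute both sides by combining the authors' previous description of the enhanced solution complex of the Fourier transform in \cite{KT23} with an A'Campo-style monodromy zeta function calculation.

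For $\SN\in\Modleq(\SD_Z)$ with $Z\in\{X,Y\}$, I introduce the monodromy zeta function at infinity
\[
\zeta_\infty(\SN;t)\;:=\;\det\bigl(\id-t\cdot\Psi_a\bigm|\Sol_Z(\SN)_a\bigr)\;=\;\prod_{\lambda\in\CC^\ast}(1-\lambda t)^{\mu(\SN,\lambda)}.
\]
For any $\lambda\neq 1$ the integer $\mu(\SN,\lambda)$ is precisely the order of vanishing of $\zeta_\infty(\SN;t)$ at $t=\lambda^{-1}$. Hence the asserted reciprocity is equivalent to the statement that $\zeta_\infty(\SM^\wedge;t)$ and $\zeta_\infty(\SM;t^{-1})$ coincide as rational functions in $t$ up to a nonzero scalar and factors of the form $t^{m}$ and $(1-t)^{k}$ for some $m,k\in\ZZ$. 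Since $\zeta_\infty$ is multiplicative in short exact sequences, $\Modleq(\SD_X)$ is an abelian subcategory, and the Fourier transform $(\cdot)^\wedge$ is exact on it, I may reduce to the case of simple $\SM$, whose formal structure at $\infty$ is governed by a single exponential factor $e^{cz}$ together with a single regular piece.

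Next, by the authors' previous work \cite{KT23}, under the moderate irregularity assumption the enhanced solution complex $\Sol^{\rm E}_Y(\SM^\wedge)$ is described explicitly as an enhanced Fourier-Sato transform of $\Sol^{\rm E}_X(\tilde\SM)$ across $\bar X\simeq\PP^1$. Passing to the ordinary solution complex and restricting to a large circle $C_{R'}\subset Y^\an$ exhibits $\Sol_Y(\SM^\wedge)|_{C_{R'}}$ as a direct sum of local pieces indexed by the finite singular points of $\SM$ in $X^\an$ and by the exponential factors of $\tilde\SM$ at $\infty$. To each such piece I apply an A'Campo-type formula to read off its contribution to $\zeta_\infty(\SM^\wedge;t)$. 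The geometric heart of this step is that the Fourier kernel $e^{-zw}$ reverses orientation in the stationary-phase picture relating $C_{R'}\subset Y^\an$ to the corresponding loops in $X^\an$, so that every formal monodromy eigenvalue $\lambda$ attached to $\SM$ appears inverted in $\zeta_\infty(\SM^\wedge;t)$. A direct computation of $\zeta_\infty(\SM;t)$ from the same formal monodromy data of $\SM$ then yields the identity of rational functions required in the previous paragraph.

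The main obstacle I anticipate is controlling the eigenvalue $\lambda=1$ contribution. The A'Campo / zeta function formalism detects the monodromy spectrum only modulo the trivial eigenvalue, and the perverse truncations, push-forwards and tensor products entering the enhanced Fourier-Sato transform each deposit Euler characteristic corrections that tend to accumulate precisely in the $\lambda=1$ piece. Carefully bookkeeping these corrections — so as to show that they indeed lie entirely in the $\lambda=1$ part and do not leak into nontrivial eigenvalues — will be the heart of the argument, and their very presence is exactly the reason the statement must exclude $\lambda=1$.
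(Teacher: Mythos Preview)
Your overall architecture matches the paper's: pass to monodromy zeta functions, invoke the enhanced Fourier--Sato description from \cite{KT23}, exploit an orientation reversal to get $\lambda\mapsto\lambda^{-1}$, and accept that all the debris lands in the $\lambda=1$ piece. Two of your intermediate claims, however, are not correct as stated and would not survive a detailed write-up.

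First, the reduction to simple $\SM$ does not buy you what you assert. Irreducibility is a global condition on $X$; it in no way forces the formal decomposition of $\tl\SM$ at $\infty$ to have a single exponential factor $e^{cz}$. A simple object of $\Modleq(\SD_X)$ can perfectly well have formal type $\bigoplus_i e^{c_i z}\otimes R_i$ at infinity with several distinct $c_i$ and nontrivial regular pieces $R_i$. The paper does not reduce to simple objects at all; it reduces instead to the case where $\SM$ is a meromorphic connection (localized along its singular support), which is the reduction that actually simplifies the enhanced-sheaf picture.

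Second, and more seriously, the sentence ``$\Sol_Y(\SM^\wedge)|_{C_{R'}}$ is a direct sum of local pieces indexed by the finite singular points of $\SM$ and the exponential factors at $\infty$'' is false in general. That restriction is a single local system of rank $\rank\SM^\wedge$, and there is no canonical splitting of its monodromy into such blocks. What the paper does instead is introduce an auxiliary sheaf $K(w,t)\subset L$ on $U^{\an}$ (the kernel of a surjection $L\twoheadrightarrow M(w,t)$ built from the Fourier kernel), and it is the \emph{support} of $K(w,t)$, for $t\gg0$, that breaks up into a disjoint union of pieces near the $a_i$ and near $a_\infty$. One then compares $Sol_Y(\SM^\wedge)$ to $\rmR q_{2!}K$ via a distinguished triangle whose third term $\rmR q_{2!}\tl L$ has constant (hence monodromy-trivial) cohomology; the compactly supported cohomology of each local piece of $K_\theta$ can then be analyzed separately. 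The pieces near the finite $a_i$ are shown to carry trivial monodromy because, with $t_0\gg R$, the level sets of $\Re f^{Re^{i\theta}}$ barely move as $\theta$ varies; only the piece near $a_\infty$ contributes, and there the moderate-irregularity hypothesis makes the relevant region a perturbation of a half-plane $V_\theta$ that rotates \emph{clockwise} as $\theta$ runs counterclockwise, producing $\lambda^{-1}$. Your ``direct sum'' shortcut skips exactly the mechanism (the sheaf $K$ and its support decomposition) that makes this localization legitimate.
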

In addition to the use of the machineries 
in \cite{DK16} and \cite{KS16a}, for the 
proof of Theorem \ref{MT-1} we apply the theory of monodromy 
zeta functions in singularity theory (see Section \ref{sec:zeta} for a 
review of it)  
to the precise description  
of the enhanced solution complex of $\SM^{\wedge}$ obtained in 
\cite{KT23}. It seems that such a result can be obtained 
most neatly through the theories of 
the irregular Riemann-Hilbert 
correspondence. 
In Section \ref{dim-N}, we partially extend Theorem \ref{MT-1} to the 
higher-dimensional case $N \geq 1$ and show that the 
eigenvalues $\lambda \neq 1$ of the 
monodromies at infinity of Fourier transforms of 
regular holonomic $\SD$-modules on $X= \CC^N$ can be calculated 
explicitly. Recall that by \cite[Corollary 4.5]{IT20a} for 
a regular holonomic $\SD$-module $\SM$ on $X= \CC^N$ 
we obtain a $\CC^*$-conic open subset 
$\Omega \not= \emptyset$ of $Y= \CC^N$ such that 
the restriction of $Sol_Y( \SM^\wedge )$ to it 
is a local system. Then we define the monodromy 
at infinity of the Fourier transform $\SM^\wedge$ to 
be the monodromy of the local system along a large 
circle in a $\CC^*$-orbit in $\Omega$. 
For the precise definition, see Section \ref{dim-N}. 
Also, for similar results on a slightly different type of 
monodromies at infinity of Fourier transforms of 
some special regular holonomic $\SD$-modules, see 
\cite{AET15} and \cite{Tak10}.

\section{Preliminaries}\label{prelim}

In this section, we recall some basic notions 
and results which will be used in this paper.
We assume here that the reader is familiar with 
the theory of sheaves and functors in the 
framework of derived categories.
For them we follow the terminologies in \cite{KS90} etc.
For a topological space $X$ denote by $\BDC(\CC_X)$  
the derived category consisting of bounded 
complexes of sheaves of $\CC$-vector spaces on it. If 
$X$ is a complex manifold, we denote by $\Dbc ( \CC_X)$ 
the full subcategory of $\BDC(\CC_X)$ 
consisting of objects with constructible cohomologies.

\subsection{Enhanced sheaves}\label{sec:enh}
We refer to \cite{DK16} and \cite{KS16b} 
for the details of this subsection.
Let $X$ be a complex manifold and 
we consider the maps 
\begin{align}
X\times\RR^2 \xrightarrow{p_1,p_2,\mu} 
X\times\RR \overset{\pi}{\longrightarrow} X,
\end{align}
where $p_1,p_2,\pi$ are the projections and we set 
$\mu(x,t_1,t_2)\coloneq(x,t_1+t_2)$.
Then we define the bounded derived 
category of enhanced sheaves $\BEC(\CC_X)$ on $X$ by
\begin{align}
\BEC(\CC_X) \coloneq \BDC(\CC_{X\times\RR})/\pi^{-1}\BDC(\CC_X).
\end{align}
The convolution functor $\Potimes$ in 
$\BDC(\CC_{X\times\RR})$ is defined by 
\begin{equation}
F\Potimes G \coloneq \rmR\mu_!(p_1^{-1}F\otimes p_2^{-1}G) 
\end{equation} 
and it induces a convolution functor in $\BEC(\CC_X)$, 
which we denote also by $\Potimes$. In this situation, 
there exists a quotient functor 
\begin{align}
\bfQ \colon \BDC(\CC_{X\times\RR})\longrightarrow\BEC(\CC_X). 
\end{align}
For a morphism of complex manifolds $f\colon X\rightarrow Y$,
we define the direct image functor $\bfE f_\ast\colon 
\BEC(\CC_X)\rightarrow\BEC(\CC_Y)$
so that the following diagram commutes:
\begin{equation}
\vcenter{
\xymatrix@M=7pt@C=36pt@R=24pt{
\BDC(\CC_{X\times\RR}) \ar[r]^-{\rmR(f\times\id_{\RR})_\ast} 
\ar[d]^-{\bfQ} & \BDC(\CC_{Y\times\RR}) \ar[d]^-{\bfQ} & \\
\BEC(\CC_X) \ar[r]^-{\bfE f_\ast} & \BEC(\CC_Y). &
}}
\end{equation} 
The proper direct image and (proper) inverse image functors
\begin{align}
\bfE f_! &\colon 
\BEC(\CC_X)\longrightarrow\BEC(\CC_Y), \\
\bfE f^{-1},\bfE f^! &\colon 
\BEC(\CC_Y)\longrightarrow\BEC(\CC_X)
\end{align}
are defined similarly. We have a natural embedding 
$\epsilon\colon\BDC(\CC_X)\rightarrow\BEC(\CC_X)$ defined by
\begin{align}
\epsilon(L) \coloneq \bfQ(\CC_{\{t\geq0\}}\otimes\pi^{-1}L),
\end{align}
where $\{t\geq0\}$ stands for 
$\Set*{\(x,t\)\in X\times\RR}{t\geq0}$.

\subsection{Enhanced ind-sheaves}\label{sec:enhind}

We briefly recall some notions and results on 
enhanced ind-sheaves without giving detailed definitions.
We refer to \cite{KS01} for ind-sheaves, 
to \cite{DK16} for ind-sheaves on bordered spaces, and 
to \cite{DK16} and \cite{KS16b} for enhanced ind-sheaves.
Let $X$ be a complex manifold and $\RR_\infty$ 
the bordered space $(\RR, \var{\RR}\coloneq\RR\sqcup\{\pm\infty\})$.
Denote by $\BDC(\rmI\CC_X)$ and $\BDC(\rmI\CC_{X\times\RR_\infty})$ 
the bounded derived categories of ind-sheaves on $X$ and 
ind-sheaves on the bordered space $X\times\RR_\infty$, respectively.
We define the the bounded derived 
category of enhanced ind-sheaves $\BEC(\rmI\CC_X)$ on $X$ by 
\begin{equation}
\BEC(\rmI\CC_X) \coloneq 
\BDC(\rmI\CC_{X\times\RR_\infty})/\pi^{-1}\BDC(\rmI\CC_X),
\end{equation}
where $\pi\colon X\times\RR_\infty\rightarrow X$ 
is the projection of bordered spaces.
Then there exists a quotient functor 
\begin{align}
\bfQ \colon \BDC(\rmI\CC_{X\times\RR_\infty})
\longrightarrow\BEC(\rmI\CC_X). 
\end{align}
Furthermore, as in the case of enhanced sheaves 
we can define the convolution functor
\begin{align}
\Potimes \colon \BEC(\rmI\CC_X)\times\BEC(\rmI\CC_X)
\longrightarrow\BEC(\rmI\CC_X)
\end{align}
and the operations of (proper) direct and inverse images
\begin{align}
\bfE f_\ast,\bfE f_{!!} &\colon \BEC(\rmI\CC_X)
\longrightarrow\BEC(\rmI\CC_Y), \\
\bfE f^{-1},\bfE f^! &\colon \BEC(\rmI\CC_Y)
\longrightarrow\BEC(\rmI\CC_X)   
\end{align} 
for a morphism of complex manifolds $f\colon X\rightarrow Y$. 
We have a natural embedding $\epsilon\colon\BDC(\rmI\CC_X)
\rightarrow\BEC(\rmI\CC_X)$ defined by
\begin{align}
\epsilon(\SL) \coloneq 
\bfQ(\CC_{\{t\geq0\}}\otimes\pi^{-1}\SL). 
\end{align}
We set $\CC_X^\rmE\coloneq\bfQ\Bigl(\underset{\alpha \to+\infty}
{\inj}\CC_{\{t\geq \alpha \}}\Bigr)\in\BEC(\rmI\CC_X)$, where 
the symbol $\inj$ stands for the inductive limit  
in $\BDC(\rmI\CC_{X\times\var{\RR}})$.

\subsection{Exponential enhanced (ind-)sheaves}\label{sec:exp-enhind}

Let $X$ be a complex manifold.
Denote by $\BECrc(\CC_X)$ and $\BECrc(\rmI\CC_X)$ 
the triangulated categories of $\RR$-constructible 
enhanced (ind-)sheaves on $X$
(see \cite{DK16}).
Let $U\subset X$ be an open subset and $\phi \colon U\to\RR$ 
a continuous function on it. 
For a locally closed subset $Z\subset U$, we define the exponential 
enhanced (ind-)sheaves $\sfE_{Z\vbar X}^\phi \in \BEC (\CC_X)$ 
and $\EE_{Z\vbar X}^\phi \in \BEC (\rmI\CC_X)$ by 
\begin{align}
\sfE_{Z\vbar X}^\phi \coloneq \bfQ(\CC_{\{t+\phi\geq0\}}), \quad 
\EE_{Z\vbar X}^\phi \coloneq \CC_X^\rmE\Potimes\sfE_{Z\vbar X}^\phi, 
\end{align}
where $\{t+\phi\geq0\}$ stands for 
$\Set*{(x,t)\in X\times\RR}{x \in Z, t+\phi(x)\geq0}$.

\subsection{$\SD$-modules}\label{sec:Dmod}

Let us recall some notions and results on 
$\SD$-modules on a complex manifold $X$
(we refer to \cite{Kas03} and \cite{HTT08} etc.).
Denote by $\SO_X,\Omega_X$ and $\SD_X$ the sheaves 
of holomorphic functions, holomorphic differential forms 
of top degree and holomorphic differential operators on $X$, respectively.
Let $\Mod(\SD_X)$ be the abelian category of left $\SD_X$-modules.
Then we can define $\Modcoh(\SD_X)$ (resp. $\Modhol(\SD_X), \Modrh(\SD_X)$) 
to be the subcategory of $\Mod(\SD_X)$ consisting of 
coherent (resp. holonomic, regular holonomic) $\SD_X$-modules.
We write $\BDC(\SD_X)$ for the bounded derived category of 
left $\SD_X$-modules and denote by $\BDCcoh(\SD_X)$,  
$\BDChol(\SD_X)$ and $\BDCrh(\SD_X)$ its full triangulated subcategories of 
objects which have coherent,
holonomic and regular holonomic cohomologies  respectively.
The symbols $\Dotimes,\bfD f_\ast,\bfD f^\ast$ stand for 
the standard operations for $\SD$-modules associated to 
a morphism of complex manifolds $f\colon X\to Y$.
The solution functor is defined by
\begin{align}
Sol_X \colon \BDCcoh(\SD_X)^{\op}\longrightarrow
\BDC(\CC_X), \qquad \SM\longmapsto\rhom_{\SD_X}(\SM,\SO_X). 
\end{align}
Let $D\subset X$ be a closed hypersurface and 
denote by $\SO(\ast D)$ the sheaf of 
meromorphic functions on $X$ with poles in $D$.
Then for $\SM\in\BDC(\SD_X)$, we set 
\begin{align}
\SM(\ast D)\coloneq\SM\Dotimes\SO_X(\ast D)
\end{align}
and for $f\in\SO_X(\ast D)$ and $U\coloneq X\bs D$, set
\begin{align}
\SD_X e^f &\coloneq \SD_X / 
\Set{P\in\SD_X}{P e^f\vbar_U=0}, \\
\SE_{U\vbar X}^f &\coloneq \SD_X e^f(\ast D).
\end{align}
Note that $\SE_{U\vbar X}^f$ is holonomic.

In \cite{DK16}, D'Agnolo and Kashiwara constructed 
the enhanced solution functor on a complex manifold $X$
\begin{align}
Sol_X^\rmE\colon\BDChol(\SD_X)^{\op}
\longrightarrow\BECrc(\rmI\CC_X)
\end{align}
and proved that it is fully faithful.
Instead of giving its definition, we recall some 
of their properties.

\begin{proposition}\label{prop-K2}
Let $D\subset X$ be a closed hypersurface in 
$X$ and set $U\coloneq X\bs D$.
\begin{enumerate}
\item[{\rm(i)}] If $\SM\in\BDChol(\SD_X)$, 
then there exists an isomorphism in $\BEC(\rmI\CC_X)$
\begin{align}Sol_X^{\rmE}(\SM(\ast D)) \simeq 
\pi^{-1}\CC_U\otimes Sol_X^{\rmE}(\SM).
\end{align}
\item[{\rm(ii)}] Let $f\colon X\to Y$ 
be a morphism of complex manifolds.
If $\SN\in\BDChol(\SD_Y)$, then there 
exists an isomorphism in $\BEC(\rmI\CC_X)$
\begin{equation}
Sol_X^{\rmE}(\bfD f^\ast\SN)\simeq\bfE f^{-1}Sol_Y^{\rmE}(\SN).
\end{equation}
\item[{\rm(iii)}] Let $f\colon X\to Y$ 
be a morphism of complex manifolds.
If $\SM\in\BDChol(\SD_X)$ and $\supp(\SM)$ is proper over $Y$, then there 
exists an isomorphism in $\BEC(\rmI\CC_Y)$
\begin{equation}
Sol_X^{\rmE}(\bfD f_\ast\SM)[d_Y]\simeq\bfE f_\ast Sol_Y^{\rmE}(\SM)[d_X]
\end{equation}
where $d_X$ (resp. $d_Y$) is the complex dimension of $X$ (resp. $Y$).
\item[{\rm(iv)}] If $f\in\SO_X(\ast D)$, 
then there exists an isomorphism in $\BEC(\rmI\CC_X)$
\begin{align}
Sol_X^{\rmE}(\SE_{U\vbar X}^f) 
\simeq \EE_{U\vbar X}^{\Re f}.
\end{align}
\item[{\rm(v)}] Let $\SM$ be 
a regular holonomic $\SD_X$-module and 
set $L\coloneq Sol_X(\SM)$.
Then we have an isomorphism in $\BEC(\rmI\CC_X)$
\begin{align}
Sol_X^{\rmE}(\SM)\simeq\CC_X^{\rmE}\Potimes\epsilon(L).
\end{align}
\end{enumerate}
\end{proposition}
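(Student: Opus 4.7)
The plan is to derive each of (i)--(v) from the foundational properties of the enhanced solution functor $Sol_X^{\rmE}$ established in \cite{DK16}, by tracing their content back to properties of the complex of tempered holomorphic functions from which $Sol_X^{\rmE}$ is built. I would treat them not in the listed order but in the order of logical dependence: first the functorial items (ii) and (iii); then the concrete computations (iv) and (v); and finally deduce (i) from (iv).

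For (ii) and (iii), the compatibility of $Sol_X^{\rmE}$ with D-module inverse and (proper) direct images is an enhanced lift of the corresponding compatibilities of the tempered solution complex with $\bfD f^\ast$ and $\bfD f_\ast$. Once one unwinds the definitions of $\bfE f^{-1}$ and $\bfE f_\ast$ through the ordinary sheaf-theoretic operations on $X \times \RR_\infty$ modulo the subcategory $\pi^{-1}\BDC(\rmI\CC_X)$, (ii) reduces to an ordinary base-change statement for tempered holomorphic functions, while (iii) reduces to the properly supported direct image theorem for them, the hypothesis that $\supp(\SM)$ is proper over $Y$ playing the same role as in the classical holonomic setting. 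The cohomological shift $[d_Y - d_X]$ in (iii) reflects the standard Poincar\'e--Verdier duality relating the solution functor on the source to the de Rham functor appearing on the target.

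For (iv), I would carry out the computation directly. A local section of $Sol_X^{\rmE}(\SE_{U\vbar X}^f)$ over a piece of $X \times \RR_\infty$ is a tempered holomorphic function annihilated by the operators defining $\SE_{U\vbar X}^f$; away from $D$ this forces it to be a scalar multiple of $e^{t f(x)}$ with $t$ the enhanced parameter, and the enhanced tempered growth condition becomes equivalent to the inequality $t + \Re f \geq 0$, which is exactly the support condition defining $\EE_{U\vbar X}^{\Re f}$. For (v), the enhanced regular Riemann--Hilbert correspondence implies that when $\SM$ is regular holonomic, and therefore carries no exponential factors contributing at infinity in $t$, its enhanced solutions are recovered from the classical solution complex $L = Sol_X(\SM)$ as the convolution $\CC_X^{\rmE} \Potimes \epsilon(L)$.

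Finally, (i) is obtained from (iv) applied with $f = 0$: this identifies $\SE_{U\vbar X}^0$ with $\SO_X(\ast D)$ and yields $Sol_X^{\rmE}(\SO_X(\ast D)) \simeq \pi^{-1}\CC_U \otimes \CC_X^{\rmE}$; combined with the compatibility of $Sol_X^{\rmE}$ with the derived tensor product of D-modules (which converts $\Dotimes$ into $\Potimes$) applied to $\SM(\ast D) = \SM \Dotimes \SO_X(\ast D)$, together with a projection-formula-type identity relating convolution with $\CC_X^{\rmE}$ to ordinary tensor product by $\pi^{-1}\CC_U$, one deduces (i). The main technical obstacle is (iv), where one must control tempered estimates for $e^f$ uniformly as one approaches the pole divisor $D$; this is the content of the relevant sections of \cite{DK16}, after which the remaining statements follow essentially formally from the six-functor calculus on bordered spaces.
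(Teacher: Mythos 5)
The paper itself offers no proof of this proposition: it is explicitly a recollection of properties of the enhanced solution functor, with the citation of \cite{DK16} (see also \cite{KS16b}) serving as the justification, so there is no in-paper argument to compare yours against. Judged as an outline of how these facts are established in \cite{DK16}, your sketch follows the standard route (functoriality of the tempered/enhanced solution complex for $\bfD f^\ast$ and proper $\bfD f_\ast$, the explicit computation for exponential modules, the regular case, and localization), and your reduction of (i) to (iv) with $f=0$ together with the identity $\epsilon(\CC_U)\Potimes F\simeq\pi^{-1}\CC_U\otimes F$ is a legitimate alternative to quoting (i) directly.

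Three caveats, in decreasing order of importance. In (iv) your description of the sections is not right as stated: on $U$ the ordinary solutions of $\SE_{U\vbar X}^f$ are constant multiples of $e^{f}$, and the enhanced parameter $t$ does not multiply $f$ (there is no ``$e^{tf}$''); $t$ enters additively through the auxiliary equation in the extra variable, and the substance of the statement is that the tempered-growth condition in $(x,t)$, uniformly up to the pole divisor $D$, translates into the support condition $\{t+\Re f\geq 0\}$ --- this uniform estimate near $D$ is precisely the nontrivial analytic input of \cite{DK16}, so (iv) is not a routine local computation. In (v), appealing to ``the enhanced regular Riemann--Hilbert correspondence'' is essentially circular, since (v) is a constituent of that correspondence; what is actually used is Kashiwara's theorem that tempered and ordinary solutions coincide for regular holonomic modules, from which the formula $Sol_X^{\rmE}(\SM)\simeq\CC_X^{\rmE}\Potimes\epsilon(L)$ follows by the construction of $Sol_X^{\rmE}$. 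Finally, the compatibility you invoke for (i), converting $\Dotimes$ into $\Potimes$ (at least in the special case $\SN=\SO_X(\ast D)=\SE_{U\vbar X}^{0}$), is itself a theorem of \cite{DK16}, obtained from the external-product formula for enhanced solutions combined with item (ii); it does not follow ``essentially formally'' from the six-functor calculus on bordered spaces, so if you take this route you should cite it on the same footing as the other items.
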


\subsection{Puiseux germs and normal forms of 
enhanced (ind-)sheaves}\label{sec:K6}

In this subsection, we recall some definitions on 
Puiseux germs and 
normal forms of enhanced (ind-)sheaves 
in \cite[Section 5]{DK18} to describe 
the Hukuhara-Levelt-Turrittin theorem 
in terms of enhanced ind-sheaves.
Let $X$ be a complex manifold of dimension one.
For $a\in X$, denote by $\varpi_a\colon \tl{X_a}\to X$ 
the real oriented blow-up of $X$ along $a$ and consider 
the commutative diagram
\begin{equation}
\vcenter{
\xymatrix@M=5pt{
S_aX \ar[r]^-{\tl{\imath}_a} & \tl{X_a} \ar[d]^-{\varpi_a}& \\
X\bs\{a\} \ar[r]^-{j_a} \ar[ur]^-{\tl{\jmath}_a} & X, 
}}\end{equation}
where we set $S_aX\coloneq\varpi_a^{-1}(a)\simeq 
S^1$ and $\tl{\imath}_a,\tl{\jmath}_a,j_a$ 
are the natural embeddings.
We denote by $\SP_{\tl{X_a}}$ the subsheaf of 
$\tl{\jmath}_{a\ast} j_a^{-1}\SO_X$ whose sections are defined by 
\begin{multline*}
\Gamma(\Omega;\SP_{\tl{X_a}}) \coloneq 
\{f\in\Gamma(\Omega;\tl{\jmath}_{a\ast} 
j_a^{-1}\SO_X)\mid \textit{For any $\theta\in\Omega\cap S_aX$,} \\ 
\textit{$f$ admits a Puiseux expansion at $\theta$.}\}
\end{multline*}
for open subsets $\Omega\subset\tl{X_a}$.
Then we define the sheaf of 
Puiseux germs $\SP_{S_aX}$ on $S_aX$ to be  
\begin{align}
\SP_{S_aX}\coloneq\tl{\imath}_a^{\,-1}\SP_{\tl{X_a}}.
\end{align}
Let $z_a$ be a local coordinate centered at $a$ and  
denote by $\SP_{S_aX}^\prime$ the subsheaf of 
$\SP_{S_aX}$ consisting of sections locally contained in
\begin{align}
\bigcup_{p\in\ZZ_{\geq1}}z_a^
{-\frac{1}{p}}\CC[z_a^{-\frac{1}{p}}]
\end{align}
for some (hence, any) branch of $z_a^{1/p}$.
Let $\SM$ be a holonomic $\SD_X$-module. For $a\in X$
the enhanced solution complex $Sol_X^{\rmE}(\SM)$ has 
the following decomposition by some 
exponential enhanced ind-sheaves on 
sufficiently small open sectors along $a$.

\begin{lemma}[see e.g. {\cite[Corollary 3.7]{IT20a}}]\label{lem-ITa}
Let $\SM$ be a holonomic $\SD_X$-module and let $a\in X$.
Then for any $\theta\in S_aX$, 
there exist its sectorial neighborhood 
$V_\theta \subset X \setminus \{ a \} \subset \tl{X_a}$ 
and holomorphic functions 
$f_1,\dots,f_m\in\Gamma(V_\theta;\SP_{\tl{X_a}})$ such that
\begin{align}
\pi^{-1}\CC_{V_\theta}\otimes Sol_X^{\rmE}(\SM) \simeq 
\bigoplus_{i=1}^m\EE_{V_\theta\vbar X}^{\Re f_i}.
\end{align}
\end{lemma}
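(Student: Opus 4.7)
The plan is to combine the sectorial Hukuhara--Levelt--Turrittin theorem for meromorphic connections with the enhanced solution properties recorded in Proposition~\ref{prop-K2}.

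First I would reduce to the case where $\SM$ is a meromorphic connection with pole along $\{a\}$. Since $V_\theta \subset X\setminus\{a\}$, Proposition~\ref{prop-K2}(i) applied to $D=\{a\}$ yields
\[
\pi^{-1}\CC_{V_\theta}\otimes Sol_X^{\rmE}(\SM) \simeq \pi^{-1}\CC_{V_\theta}\otimes Sol_X^{\rmE}(\SM(\ast a)).
\]
After shrinking $X$ to a small disc around $a$, we may therefore replace $\SM$ by $\SM(\ast a)$, which is a meromorphic connection with pole contained in $\{a\}$.

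Next I would invoke the sectorial form of the Hukuhara--Levelt--Turrittin theorem (Hukuhara--Turrittin--Sibuya--Malgrange). Possibly after passing to a ramified cover $z = w^p$ at $a$ and restricting to a small simply connected sectorial neighborhood $V_\theta$ of $\theta$ on which a branch of $z^{1/p}$ has been chosen (so that all Puiseux series become single-valued holomorphic functions), it provides Puiseux germs $f_1,\dots,f_m \in \Gamma(V_\theta;\SP_{\tl{X_a}})$ --- the exponential factors of $\SM$, listed with multiplicities so that $m=\rank\SM$ --- and an isomorphism of $\SD_{V_\theta}$-modules
\[
\SM^{\an}|_{V_\theta} \simeq \bigoplus_{i=1}^m \bigl(\SO_{V_\theta}\cdot e^{f_i} \otimes_{\SO_{V_\theta}} \R_i\bigr),
\]
where each $\R_i$ is a regular integrable connection on $V_\theta$. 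Because $V_\theta$ is simply connected and avoids $a$, each $\R_i$ trivializes to $(\SO_{V_\theta}, d)$ as a $\SD_{V_\theta}$-module, and the decomposition reduces to a direct sum of the rank-one exponential summands $\SO_{V_\theta}\cdot e^{f_i}$.

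Finally I would compute the enhanced solution term by term. By Proposition~\ref{prop-K2}(iv), each rank-one exponential $\SO_{V_\theta}\cdot e^{f_i}$ --- regarded locally as the restriction of $\SE_{X\setminus\{a\}\vbar X}^{f_i}$ after extending $f_i$ meromorphically on a suitable ramified open set --- has enhanced solution complex isomorphic to $\EE_{V_\theta\vbar X}^{\Re f_i}$. Combining this with the additivity of $Sol_X^{\rmE}$ and the first-step reduction yields the claimed isomorphism. The main obstacle is the second step: the sectorial Turrittin decomposition requires Sibuya's asymptotic lifting theorem to promote the purely formal decomposition at $a$ to an analytic isomorphism on a narrow sector $V_\theta$, and one must handle the ramified local coordinates with care so that the exponential factors $f_i$ descend to single-valued holomorphic functions on $V_\theta$. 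Once this classical input is in place, the remaining steps are routine applications of Proposition~\ref{prop-K2}.
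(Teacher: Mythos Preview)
The paper does not supply its own proof of this lemma: it is simply quoted from \cite[Corollary~3.7]{IT20a}, so there is no in-paper argument to compare against. Your outline---localize to $\SM(\ast a)$ via Proposition~\ref{prop-K2}(i), invoke the sectorial Hukuhara--Levelt--Turrittin decomposition (formal classification plus Sibuya's asymptotic lifting), trivialize the regular pieces on the simply connected sector, and then compute each exponential summand via Proposition~\ref{prop-K2}(iv)---is exactly the standard route and is what underlies the cited result.

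One small point worth tightening in step three: Proposition~\ref{prop-K2}(iv) is stated for $f\in\SO_X(\ast D)$, whereas your $f_i$ are Puiseux germs that are only holomorphic on $V_\theta$. The cleanest way to handle this is to work on the ramified cover $\rho\colon z_a=w^p$, where the $f_i$ become honest meromorphic functions; apply Proposition~\ref{prop-K2}(iv) upstairs; and then use Proposition~\ref{prop-K2}(ii) and (iii) for $\rho$ (finite, hence proper) to descend. You gesture at this (``on a suitable ramified open set''), but making the descent explicit is the one place where a reader might otherwise object.
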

In \cite{DK18}, D'Agnolo and Kashiwara introduced some 
notions on normal forms of enhanced sheaves to refine the above decomposition.
We recall their  definitions in a slightly modified form as follows.

\begin{definition}
Let $a\in X$ and let 
$N\colon\SP_{S_aX}^\prime\to(\ZZ_{\geq0})_{S_aX}$ be 
a morphism of sheaves of sets.
Then the morphism $N$ is said to be a multiplicity at $a$ if 
$N_\theta^{>0}\coloneq N_\theta^{-1}(\ZZ_{>0})
\subset\SP_{S_aX,\theta}^\prime$ is a finite set  
for any $\theta\in S_aX$.
\end{definition}

\begin{definition}
Let $\SF\in\ECrc^{\rmb}(\rmI\CC_X)$ be an $\RR$-constructible 
enhanced ind-sheaf and $N\colon\SP_{S_aX}^\prime\to(\ZZ_{\geq0})_{S_aX}$ 
a multiplicity at $a\in X$.
Then we say that $\SF$ has 
a normal form at $a\in X$ for $N$ if any $\theta\in S_aX$ has 
a sectorial open neighborhood $V_\theta \subset 
X \setminus \{ a \} \subset \tl{X_a}$ for which we have 
an isomorphism
\begin{align}
\pi^{-1}\CC_{V_\theta}\otimes \SF\simeq 
\bigoplus_{f\in N_\theta^{>0}}
(\EE_{V_\theta\vbar X}^{\Re f})^{N_\theta(f)}.
\end{align}
\end{definition}
Enhanced sheaves on $X$ 
having a normal form at $a\in X$ are defined similarly. 
Note that if an enhanced ind-sheaf  
$\SF\in\ECrc^{\rmb}(\rmI\CC_X)$ has 
a normal form at $a\in X$, then the multiplicity $N$
for which $\SF$ has a normal form
at $a$ is uniquely determined (see {\cite[Lemma 7.15]{Mochi22}}, 
{\cite[Corollary 5.2.3]{DK18}} and {\cite[Proposition 3.16]{IT20a}}). 
Then we obtain the following proposition.

\begin{proposition}[{\cite[Lemma 5.4.4]{DK18}}]\label{prop-K4}
Let $\SM$ be a holonomic $\SD_X$-module.
Then for any point $a\in X$ the enhanced solution complex 
$Sol_X^{\rmE}(\SM)\in\ECrc^{\rmb}(\rmI\CC_X)$ of $\SM$ 
has a normal form at $a$.
\end{proposition}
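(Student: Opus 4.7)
The plan is to deduce Proposition \ref{prop-K4} from the sectorial decomposition of Lemma \ref{lem-ITa} by showing that the exponential factors can be organized into a coherent sheaf-theoretic datum over $S_aX$. First, I would fix $a\in X$ and a local coordinate $z_a$ centered at $a$, and apply Lemma \ref{lem-ITa}. This produces, for every $\theta\in S_aX$, a sectorial neighborhood $V_\theta\subset\tl{X_a}$ and holomorphic functions $f^{(\theta)}_1,\dots,f^{(\theta)}_{m(\theta)}\in\Gamma(V_\theta;\SP_{\tl{X_a}})$ such that
\begin{equation*}
\pi^{-1}\CC_{V_\theta}\otimes Sol_X^{\rmE}(\SM)\simeq\bigoplus_{i=1}^{m(\theta)}\EE_{V_\theta\vbar X}^{\Re f^{(\theta)}_i}.
\end{equation*}
Since $\EE^{\Re f}_{V_\theta|X}\simeq\EE^{\Re g}_{V_\theta|X}$ whenever $f-g$ is bounded on $V_\theta$, I may subtract off the convergent tails of the Puiseux expansions and assume each $f^{(\theta)}_i$ represents an element $\bar f^{(\theta)}_i\in\SP_{S_aX,\theta}^\prime$, i.e.\ a finite Puiseux polynomial in $z_a^{-1/p}$ with negative exponents.

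Next, I would define a candidate multiplicity $N\colon\SP_{S_aX}^\prime\to(\ZZ_{\geq0})_{S_aX}$ stalkwise by
\begin{equation*}
N_\theta(f)\;\coloneq\;\#\{\,i\in\{1,\dots,m(\theta)\}\mid \bar f^{(\theta)}_i=f\text{ in }\SP_{S_aX,\theta}^\prime\,\},
\end{equation*}
for $f\in\SP_{S_aX,\theta}^\prime$. Finiteness of $N_\theta^{>0}$ is then immediate from the finiteness of $m(\theta)$. The content is to check that $N$ is well-defined (independent of the choice of sectorial decomposition) and actually assembles into a morphism of sheaves of sets. For the first point, I would invoke the uniqueness of multiplicities of normal forms noted immediately after the definition (\cite[Lemma 7.15]{Mochi22}, \cite[Corollary 5.2.3]{DK18}, \cite[Proposition 3.16]{IT20a}): once $\pi^{-1}\CC_{V_\theta}\otimes Sol_X^{\rmE}(\SM)$ admits a decomposition as above, the collection of Puiseux germs with their multiplicities is intrinsic. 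For sheafiness, given two overlapping sectors $V_{\theta_1}$ and $V_{\theta_2}$ with $\theta\in V_{\theta_1}\cap V_{\theta_2}\cap S_aX$, the two decompositions restrict to decompositions of $\pi^{-1}\CC_{V_{\theta_1}\cap V_{\theta_2}}\otimes Sol_X^{\rmE}(\SM)$, so by uniqueness they assign identical germs with identical multiplicities at $\theta$, showing that $N$ is locally constant on strata of $S_aX$ determined by the Stokes structure of $\SM$.

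Finally, with $N$ in hand, the local decomposition above reads, on each $V_\theta$,
\begin{equation*}
\pi^{-1}\CC_{V_\theta}\otimes Sol_X^{\rmE}(\SM)\;\simeq\;\bigoplus_{f\in N_\theta^{>0}}\bigl(\EE_{V_\theta\vbar X}^{\Re f}\bigr)^{N_\theta(f)},
\end{equation*}
which is exactly the defining condition for $Sol_X^{\rmE}(\SM)$ to have a normal form at $a$ for $N$.

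The main obstacle is the compatibility/uniqueness step in the middle paragraph: a priori Lemma \ref{lem-ITa} gives actual holomorphic representatives $f^{(\theta)}_i$ on sectors, but different sectors may yield representatives that differ by holomorphic functions which are merely bounded, and one must identify which part of the data is intrinsic (the Puiseux germ in $\SP_{S_aX}^\prime$) and which is not. This is precisely the content of the multiplicity test functor of \cite[Section 6.3]{DK18}, and invoking it — together with the Hukuhara--Levelt--Turrittin formal decomposition underlying Lemma \ref{lem-ITa} — is what makes the germwise definition of $N$ unambiguous and forces the sectorial decompositions to glue into a normal form.
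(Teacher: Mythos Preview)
The paper does not prove this proposition; it is quoted with a direct citation to \cite[Lemma~5.4.4]{DK18} and no argument is given. So there is nothing in the present paper to compare your sketch against.

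On its own merits your outline is essentially correct. Two remarks. First, the passage from $f_i^{(\theta)}\in\Gamma(V_\theta;\SP_{\tl{X_a}})$ to a class in $\SP'_{S_aX,\theta}$ works automatically: by definition a Puiseux expansion has exponents bounded below, so the principal part (negative exponents) is already a finite sum and hence lies in $z_a^{-1/p}\CC[z_a^{-1/p}]$; you need not invoke Hukuhara--Levelt--Turrittin separately, since it is the input to Lemma~\ref{lem-ITa} itself. Second, be careful with the uniqueness step: the statements you cite are phrased for objects that already possess a normal form at $a$, whereas you want to apply them sector by sector before any global $N$ exists. The underlying arguments in those references are in fact local (two exponential decompositions of $\pi^{-1}\CC_V\otimes\SF$ on a common sector $V$ determine the same multiset of germs in $\SP'_{S_aX,\theta}$), so the reasoning is sound, but it would be cleaner to isolate and use that local uniqueness explicitly rather than invoke the global statement whose hypothesis is precisely what you are establishing. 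Also, your phrase ``$\theta\in V_{\theta_1}\cap V_{\theta_2}\cap S_aX$'' is slightly off, since in this paper $V_\theta\subset X\setminus\{a\}$ does not meet $S_aX$; what you mean is that $\theta$ lies in the common boundary arc of the two sectors in $\tl{X_a}$.
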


Let $\SM$ be a holonomic $\SD_X$-module and $N$ the 
multiplicity for which $Sol_X^{\rmE}(\SM)$ has a normal form at $a\in X$.
Then we call the sections of 
$N^{>0}:=N^{-1}( \ZZ_{>0} )\subset\SP_{S_aX}^\prime$ 
the exponential factors of $\SM$ at $a$. 

\begin{proposition}[{\cite[Proposition 5.4.5]{DK18}}]\label{prop:hariaw}
Let $\SM\in\Modhol(\SD_X)$ and $a\in X$.
If $Sol_X^{\rmE}(\SM)\in\ECrc^{\rmb}(\rmI\CC_X)$ has a normal 
form at $a\in X$ for a multiplicity $N\colon
\SP_{S_aX}^\prime\to(\ZZ_{\geq0})_{S_aX}$, then there exist an open neighborhood 
$\Omega$ of $a$ and
an enhanced sheaf $F(a)\in\ECrc^0(\CC_X)$ such that 
$F(a)$ has a normal form at $a$ for the multiplicity $N$ and there exists an isomorphism
\begin{align}
\pi^{-1}\CC_{\Omega\bs\{a\}}\otimes Sol_X^{\rmE}(\SM) 
\simeq \CC_X^{\rmE}\Potimes F(a).
\end{align}
\end{proposition}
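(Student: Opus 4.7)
The plan is to build $F(a)$ by gluing together the exponential enhanced sheaves appearing in the local normal form of $Sol_X^{\rmE}(\SM)$. Taking $\Omega$ to be a small open disk around $a$, and using the finiteness of $N_\theta^{>0}$ together with the compactness of $S_aX\simeq S^1$, I can find a finite cover $\{V_i\}_{i=1}^k$ of $\Omega\bs\{a\}$ by connected sectorial open subsets on each of which Proposition \ref{prop-K4} yields an isomorphism
\[
\alpha_i\colon\pi^{-1}\CC_{V_i}\otimes Sol_X^{\rmE}(\SM)\simto\CC_X^{\rmE}\Potimes F_i,
\qquad
F_i\coloneq\bigoplus_{f\in N_{\theta_i}^{>0}}\bigl(\sfE_{V_i\vbar X}^{\Re f}\bigr)^{N_{\theta_i}(f)}\in\ECrc^0(\CC_X),
\]
where I have used the identity $\EE_{V_i\vbar X}^{\Re f}=\CC_X^{\rmE}\Potimes\sfE_{V_i\vbar X}^{\Re f}$ to extract the convolution with $\CC_X^{\rmE}$. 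By construction each $F_i$ has a normal form at $a$ for the restriction of the multiplicity $N$ to $V_i$.

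To glue the $F_i$, on each overlap $V_{ij}\coloneq V_i\cap V_j$ I would consider the composition $\alpha_j\circ\alpha_i^{-1}\colon\CC_X^{\rmE}\Potimes F_i|_{V_{ij}}\simto\CC_X^{\rmE}\Potimes F_j|_{V_{ij}}$. The key input is the uniqueness of the multiplicity associated to a normal form (cited in the paragraph preceding Proposition \ref{prop-K4}, following Mochizuki \cite{Mochi22}, D'Agnolo--Kashiwara \cite[Corollary 5.2.3]{DK18} and Ito--Takeuchi \cite[Proposition 3.16]{IT20a}): combined with the elementary computation $\Hom(\sfE_{V\vbar X}^{\Re f},\sfE_{V\vbar X}^{\Re g})\simeq\CC$ or $0$ on a connected sector $V$, this shows that the functor $\CC_X^{\rmE}\Potimes(\cdot)$ is fully faithful on the full subcategory of $\ECrc^0(\CC_X)$ spanned by the finite direct sums of $\sfE^{\Re f}$. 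Consequently $\alpha_j\circ\alpha_i^{-1}$ descends to a unique isomorphism $\varphi_{ij}\colon F_i|_{V_{ij}}\simto F_j|_{V_{ij}}$, and the cocycle relation $\varphi_{jk}\circ\varphi_{ij}=\varphi_{ik}$ on triple overlaps is inherited from the obvious cocycle property of the compositions $\alpha_j\circ\alpha_i^{-1}$.

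The main obstacle is the fully faithfulness statement in the previous paragraph, because the normal form decomposition is a priori formulated in the larger category of enhanced ind-sheaves, whereas we need to descend to honest enhanced sheaves $F_i\in\ECrc^0(\CC_X)$. I expect to handle it by adapting the uniqueness arguments of \cite[Corollary 5.2.3]{DK18} and \cite[Proposition 3.16]{IT20a}, which already carry out essentially the required Hom computations between the $\sfE^{\Re f}$ and their convolutions with $\CC_X^{\rmE}$ on sufficiently thin sectors. Once this is in place, the cocycle $\{\varphi_{ij}\}$ allows standard Čech-type gluing inside the abelian category $\ECrc^0(\CC_X)$ to produce $F(a)\in\ECrc^0(\CC_X)$ with $F(a)|_{V_i}\simeq F_i$. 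By construction $F(a)$ has a normal form at $a$ for the multiplicity $N$, and the local isomorphisms $\alpha_i$ assemble into the required global isomorphism $\pi^{-1}\CC_{\Omega\bs\{a\}}\otimes Sol_X^{\rmE}(\SM)\simeq\CC_X^{\rmE}\Potimes F(a)$.
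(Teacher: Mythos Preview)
The paper does not supply its own proof of this proposition: it is quoted verbatim as \cite[Proposition 5.4.5]{DK18} and used thereafter as a black box. Your outline (cover $\Omega\setminus\{a\}$ by finitely many sectors, extract local models $F_i$, descend the transition isomorphisms from $\BEC(\rmI\CC_X)$ to $\BEC(\CC_X)$ via the Hom computations behind \cite[Corollary 5.2.3]{DK18}, then glue) is the natural strategy and is essentially the argument of the original reference, so there is no substantive divergence to compare.

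One point worth sharpening: you invoke ``Čech-type gluing inside the abelian category $\ECrc^0(\CC_X)$,'' but $\BEC(\CC_X)$ is a Verdier quotient and does not come equipped with a sheaf-style gluing axiom. What actually happens is that each $F_i$ has a canonical representative $\bigoplus_f\CC_{\{x\in V_i,\ t+\Re f(x)\ge0\}}$ in $\BDC(\CC_{X\times\RR})$, and the full-faithfulness step you correctly flag as the crux shows that the $\varphi_{ij}$ lift uniquely to isomorphisms of these representing sheaves. The gluing then takes place as ordinary sheaf gluing on $X\times\RR$, and only afterwards does one pass to the quotient. Phrasing it this way closes the apparent gap.
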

In \cite[Section 3]{KT23} we refined Proposition \ref{prop:hariaw} 
to get also a global explicit
description of the enhanced solution complex of a localized holonomic 
$\SD$-module i.e. a meromorphic connection 
on a complex manifold of dimension one.

\subsection{Theory of monodromy zeta functions}\label{sec:zeta}

In this subsection, we recall the 
theory of monodromy zeta functions, which will be 
effectively used to prove our main theorems. In what follows, let 
$X$ be a complex manifold and $f\colon X \longrightarrow \CC$ 
a non-constant holomorphic function on it.  Set 
$X_0:=f^{-1}(0)\subsetneqq X$ and let 
\begin{equation}
\psi_f( \cdot ): \Db(\CC_X) \longrightarrow \Db(\CC_{X_0})
\end{equation}
be Deligne's nearby cycle functor associated to $f$ 
(see e.g. \cite[Section 4.2]{Dim04} and 
\cite[Section 2]{Tak23} for 
the details). 
As it preserves the construtibility, we obtain also 
a functor 
\begin{equation}
\psi_f( \cdot ): \Dbc(\CC_X) \longrightarrow \Dbc(\CC_{X_0}). 
\end{equation}
For $\F \in \Db (\CC_X)$ there exists also a monodromy automorphism 
\begin{equation}
\Phi ( \F ) \colon \psi_f( \F ) \simto \psi_f( \F )
\end{equation} 
of $\psi_f( \F )$ in $\Db (\CC_{X_0})$ 
(See e.g. \cite[Section 4.2]{Dim04} and 
\cite[Section 2]{Tak23} for the definition. 
See also Remark \ref{remmon} below). 
Recall that by Milnor's fibration theorem for $0<\eta\ll\varepsilon\ll 1$ the 
restriction 
\begin{equation}
f^{\circ}: B(x;\varepsilon)\cap f^{-1}(D_{\eta}^*) \longrightarrow D_{\eta}^* 
\quad (\subset\CC^*) 
\end{equation}
of $f$ is a fiber bundle, where $ B(x;\varepsilon) \subset X$ is 
an open ball in $X$ with radius $\varepsilon >0$ centered at $x \in X_0$ and 
we set $D_{\eta}^*:= 
\{t\in\CC \mid 0<|t|<\eta \} \subset \CC^*= \CC \setminus \{ 0 \}$. We call a 
fiber of this fiber bundle the Milnor fiber of 
$f$ at $x \in X_0$ and denote it by $F_x \subset X \setminus X_0$. 
In this situation, for any $j \in \ZZ$ the direct image 
sheaf $H^j {\rm R}f^{\circ}_* \CC_X$ is a local system on 
the punctured disk $D_{\eta}^* 
\subset\CC^*$ and there exist isomorphisms 
\begin{equation}
(H^j {\rm R}f^{\circ}_* \CC_X)_t \simeq H^j((f^{\circ})^{-1}(t) 
; \CC) \qquad 
(t \in D_{\eta}^*). 
\end{equation}
Then by the parallel translations of the sections of the 
local system $H^j {\rm R}f^{\circ}_* \CC_X$ along a 
loop $C$ in $D_{\eta}^*$ in the ``counterclockwise" 
direction, for a point $t \in C$ and $F_x=(f^{\circ})^{-1}(t)$ 
we obtain an automorphism of the cohomology group 
$H^j(F_x; \CC )$. We call it the (cohomological) 
Milnor monodromy (in degree $j \in \ZZ$). It is 
uniquely determined up to conjugacy. 
By the following basic results we can study 
Milnor fibers and their monodromies via nearby cycle functors. 

\begin{theorem}\label{thm.7.2.004}
{\rm (see e.g. Dimca \cite[Proposition 4.2.2]{Dim04} and 
\cite[Theorem 2.6]{Tak23})} 
For a point 
$x \in X_0=f^{-1}(0)\subset X$ of 
$X_0$ let $F_x \subset X \setminus X_0$ be the 
Milnor fiber of $f$ at $x$. Then there exist isomorphisms  
\begin{equation}\label{Mil-isom} 
H^j(F_x; \CC ) \simeq H^j \psi_f( \CC_X )_x \qquad (j\in\ZZ). 
\end{equation}
Moreover these isomorphisms are compatible 
with the automorphisms of both sides 
induced by the monodromies. 
\end{theorem}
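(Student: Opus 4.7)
The plan is to compute the stalk $H^j\psi_f(\CC_X)_x$ directly from the definition of the nearby cycle functor and identify it with the singular cohomology of the Milnor fiber. Recall that $\psi_f$ is defined using the universal cover $e\colon \widetilde{\CC^*}\to\CC^*$: setting $X^*:=X\setminus X_0$ and forming the fiber product $\widetilde{X^*}:=X^*\times_{\CC^*}\widetilde{\CC^*}$ with the natural maps $p\colon\widetilde{X^*}\to X^*$, $j\colon X^*\hookrightarrow X$, $i\colon X_0\hookrightarrow X$, one has
$$\psi_f(\F)\;=\;i^{-1}R(j\circ p)_*\,p^{-1}j^{-1}\F.$$

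First I would localize to the Milnor data. For a fundamental neighborhood basis $\{B(x;\varepsilon)\}$ of $x$ with $0<\eta\ll\varepsilon\ll 1$, the stalk formula for $R(j\circ p)_*$ combined with cofinality in $\varepsilon,\eta$ yields
$$H^j\psi_f(\CC_X)_x\;\simeq\;\varinjlim_{\varepsilon}H^j\bigl(\widetilde{U_\varepsilon};\CC\bigr),\qquad \widetilde{U_\varepsilon}:=\bigl(B(x;\varepsilon)\cap f^{-1}(D_\eta^*)\bigr)\times_{D_\eta^*}\widetilde{D_\eta^*},$$
where I have used that $B(x;\varepsilon)\cap X^*=B(x;\varepsilon)\cap f^{-1}(D_\eta^*)$ for small $\varepsilon$. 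Then I invoke Milnor's fibration theorem: $f^\circ\colon B(x;\varepsilon)\cap f^{-1}(D_\eta^*)\to D_\eta^*$ is a locally trivial bundle with fiber $F_x$. Since the universal cover $\widetilde{D_\eta^*}$ is contractible (homeomorphic to a half-plane), the pulled-back bundle $\widetilde{U_\varepsilon}\to\widetilde{D_\eta^*}$ is globally trivial, so $\widetilde{U_\varepsilon}\simeq F_x\times\widetilde{D_\eta^*}$ is homotopy equivalent to $F_x$. Combining the two steps yields the desired isomorphism $H^j(F_x;\CC)\simeq H^j\psi_f(\CC_X)_x$.

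For the compatibility with the monodromies, I would identify the sheaf-theoretic automorphism $\Phi(\CC_X)$ on $\psi_f(\CC_X)$ with the action induced by the deck transformation $\tau$ of $\widetilde{\CC^*}\to\CC^*$ associated to the counterclockwise generator of $\pi_1(\CC^*)$. Under the trivialization $\widetilde{U_\varepsilon}\simeq F_x\times\widetilde{D_\eta^*}$, the deck transformation $\tau$ acts only on the second factor, so after reidentifying the translated reference fiber with the original $F_x$ via parallel transport along a counterclockwise loop in $D_\eta^*$, the induced action on $F_x$ is exactly the geometric Milnor monodromy of $f^\circ$. Passing to cohomology matches the two automorphisms.

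The main technical obstacle is the monodromy comparison rather than the cohomological isomorphism itself. One must track sign and orientation conventions carefully, verifying that the counterclockwise generator of $\pi_1(\CC^*)$ used to define $\Phi(\F)$ via deck transformations is the same loop used to define the Milnor monodromy, and that the chosen trivialization of the pulled-back bundle is coherent with the retraction onto the reference Milnor fiber. Once these conventions are fixed, the identification is forced; the cohomology-level isomorphism is essentially formal once the universal-cover setup is in place.
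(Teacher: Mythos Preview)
The paper does not prove this theorem; it is stated as a known result with references to Dimca \cite[Proposition 4.2.2]{Dim04} and \cite[Theorem 2.6]{Tak23}. Your sketch follows the standard argument found in those references: compute the stalk of $\psi_f(\CC_X)$ via the universal-cover definition, use Milnor's fibration to trivialize the pulled-back bundle over the contractible $\widetilde{D_\eta^*}$, and identify the deck-transformation action with the geometric monodromy. This is correct, and there is nothing in the paper to compare it against beyond the citations.
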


\begin{remark}\label{remmon} 
We choose the monodromy automorphism $\Phi ( \CC_X )$ of 
the nearby cycle sheaf 
$\psi_f( \CC_X )$ so that the isomorphisms in 
\eqref{Mil-isom} are compatible with the 
Milnor monodromies on $H^j(F_x; \CC )$. 
\end{remark}
We have also the following more general result. 

\begin{theorem}\label{thm.7.2.006}
{\rm (see e.g. Dimca \cite[Proposition 4.2.2]{Dim04} and 
\cite[Theorem 2.7]{Tak23})} 
In the situation of Theorem \ref{thm.7.2.004}, 
for any constructible sheaf $\F \in \Dbc (\CC_X)$ on $X$ 
there exist isomorphisms 
\begin{equation}
H^j(F_x; \F ) \simeq H^j \psi_f( \F )_x 
 \qquad (j\in\ZZ).
\end{equation}
\end{theorem}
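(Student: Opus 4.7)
The plan is to invoke Deligne's universal-cover construction of the nearby cycle functor and combine it with Milnor's fibration theorem, adapting the argument underlying Theorem \ref{thm.7.2.004} from constant coefficients to arbitrary constructible $\F$.

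First, I would recall Deligne's description of $\psi_f$. Let $j\colon X\setminus X_0\hookrightarrow X$ and $i\colon X_0\hookrightarrow X$ be the natural inclusions, and let $p\colon \widetilde{X^\ast}\to X\setminus X_0$ be the pullback of the exponential universal cover $\widetilde{\CC^\ast}\to\CC^\ast$ along $f|_{X\setminus X_0}$. Setting $\tl{p} := j\circ p$, one has a canonical isomorphism
\begin{equation}
\psi_f(\F)\simeq i^{-1}\mathrm{R}\tl{p}_{\ast}\tl{p}^{-1}\F,
\end{equation}
so that the stalk at $x\in X_0$ is
\begin{equation}
H^j\psi_f(\F)_x\simeq \varinjlim_U H^j\bigl(\tl{p}^{-1}(U);\tl{p}^{-1}\F\bigr),
\end{equation}
where $U$ runs over a cofinal family of open neighborhoods of $x$ in $X$.

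Next, I would apply Milnor's fibration theorem: for $0<\eta\ll\varepsilon\ll 1$ the map $f\colon B(x;\varepsilon)\cap f^{-1}(D_\eta^\ast)\to D_\eta^\ast$ is a locally trivial fiber bundle with fiber $F_x$. Pulling this bundle back along the universal cover $\widetilde{D_\eta^\ast}\to D_\eta^\ast$ trivializes it, and since $\widetilde{D_\eta^\ast}\simeq\CC$ is contractible, the total space $\tl{p}^{-1}(B(x;\varepsilon))$ deformation retracts onto a single copy of the Milnor fiber $F_x\hookrightarrow B(x;\varepsilon)\setminus X_0$. Because the balls $B(x;\varepsilon)$ are cofinal among neighborhoods of $x$ in the above limit, taking cohomology of the pulled-back sheaf and passing to the limit gives
\begin{equation}
H^j\psi_f(\F)_x\simeq H^j(F_x;\F|_{F_x}),
\end{equation}
which is the desired isomorphism.

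The main technical point is to upgrade the homotopy equivalence from the constant-coefficient situation to arbitrary constructible $\F$: one needs the deformation retract $\tl{p}^{-1}(B(x;\varepsilon))\simeq F_x$ to be realized as a map of spaces over $X$ (through $\tl{p}$), so that $\tl{p}^{-1}\F$ restricts to $\F|_{F_x}$ and the two cohomology groups really coincide. A clean way to handle this is via a dévissage: since both sides of the sought isomorphism are triangulated and cohomological in $\F\in\Dbc(\CC_X)$, one may fix a Whitney stratification of a neighborhood of $x$ that is simultaneously adapted to $\F$ and to the Milnor fibration of $f$, so that $\F$ is built from shifts of sheaves of the form $\CC_Z$ for locally closed strata $Z\subset X$ by distinguished triangles; the result then reduces, stratum by stratum, to the constant-coefficient version Theorem \ref{thm.7.2.004}, applied on the restriction of $F_x$ to each stratum.
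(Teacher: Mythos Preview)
The paper does not prove this statement; it only cites Dimca \cite[Proposition 4.2.2]{Dim04} and \cite[Theorem 2.7]{Tak23}, so there is no in-paper argument to compare against. Your overall strategy---Deligne's universal-cover description of $\psi_f$ plus Milnor's fibration to identify the stalk with $H^j(F_x;\F)$---is exactly the standard one in those references, and your first two paragraphs are correct.

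The issue is in the last paragraph. You correctly isolate the real difficulty: the homotopy equivalence $\tl{p}^{-1}(B(x;\varepsilon))\simeq F_x$ must be compatible with the map to $X$ so that $\tl{p}^{-1}\F$ pulls back to $\F|_{F_x}$. But your proposed d\'evissage does not actually bypass this. Reducing to $\F=\CC_Z$ for a stratum $Z$ does \emph{not} reduce to Theorem \ref{thm.7.2.004} ``applied on the restriction of $F_x$ to each stratum'': Theorem \ref{thm.7.2.004} concerns $\psi_f(\CC_X)$ on a smooth $X$, whereas what you now need is $H^j\psi_f(\CC_Z)_x\simeq H^j(F_x\cap Z;\CC)$, which is precisely the statement you are trying to prove for the particular sheaf $\CC_Z$. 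If instead you try to pass to $\psi_{f|_{\overline Z}}$ via proper base change, you are thrown onto the possibly singular space $\overline Z$, where Theorem \ref{thm.7.2.004} as stated does not apply. In short, the d\'evissage is circular.

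The fix is to carry out your \emph{first} suggestion rather than the second: choose a Whitney stratification adapted to $\F$ and to $f$, and use Thom's first isotopy lemma to make the Milnor fibration a \emph{stratified} fiber bundle. Then the trivialization over the contractible cover $\widetilde{D_\eta^\ast}$ is stratum-preserving, so $\tl{p}^{-1}\F$ is genuinely the pullback of $\F|_{F_x}$ along the projection to $F_x$, and the K\"unneth/contractibility argument gives $H^j(\tl{p}^{-1}(B(x;\varepsilon));\tl{p}^{-1}\F)\simeq H^j(F_x;\F)$ directly, with no d\'evissage needed. This is how the cited references proceed.
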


As was first observed by A'Campo \cite{A'Campo}, 
Milnor fibers and their monodromies can be precisely studied 
by resolutions of singularities of 
$X_0=f^{-1}(0) \subset X$. Here we use his idea 
in the more general framework 
of constructible sheaves. 
Let $\CC(t)^*=\CC(t) \setminus \{0\}$ be the 
multiplicative group of the function field $\CC(t)$ of 
one variable $t$. 

\begin{definition}\label{dfn:2-79}
For a point $x \in X_0=f^{-1}(0)$ of $X_0$ we define 
the monodromy zeta function $\zeta_{f,x}(t) \in \CC(t)^*$ 
of $f$ at $x$ by 
\begin{equation}
\zeta_{f,x}(t):=\prod_{j=0}^{\infty} \ 
 \Bigl\{ \det(\id -t \cdot \Phi_{j,x}) \Bigr\}^{(-1)^j} 
\quad \in \CC(t)^*, 
\end{equation}
where the $\CC$-linear maps 
$\Phi_{j,x} \colon H^j(F_x; \CC ) \simto H^j(F_x; \CC )$ are 
the Milnor monodromies at $x$. 
\end{definition}

This monodromy zeta function $\zeta_{f,x}(t)$ is 
related to the characteristic polynomials of the 
Milnor monodromies $\Phi_{j,x} \colon H^j(F_x; \CC ) \simto H^j(F_x; \CC )$ 
($j=0,1,2, \ldots$) as follows. 
For a point $x \in X_0=f^{-1}(0)$ of $X_0$ we set 
\begin{equation}
\widetilde{\zeta}_{f,x}(t):=\prod_{j=0}^{\infty} \ 
 \Bigl\{ \det( t \cdot \id - \Phi_{j,x}) \Bigr\}^{(-1)^j} 
\quad \in \CC(t)^*. 
\end{equation}
Then for the Euler characteristic $\chi (F_x)
= {\rm deg} \zeta_{f,x}$ of the Milnor fiber 
$F_x$ of $f$ at $x$ we can easily show the following equalities 
\begin{equation}
\widetilde{\zeta}_{f,x}(t) = t^{\chi (F_x)} \cdot 
\zeta_{f,x} \Bigl( \frac{1}{t} \Bigr), 
\qquad 
\zeta_{f,x}(t) = t^{\chi (F_x)} \cdot 
\widetilde{\zeta}_{f,x} \Bigl( \frac{1}{t} \Bigr). 
\end{equation}
The following lemma was obtained by A'Campo \cite{A'Campo} 
(see also Oka \cite[Chapter I, Example (3.7)]{Oka97} for a 
precise explanation). 

\begin{lemma}\label{lem:2-ac-21}
For $1 \leq k \leq n$ let $h: \CC^n \longrightarrow \CC$ be  
the function on $\CC^n$ defined by $h(z)=z_1^{m_1}z_2^{m_2} 
\cdots z_k^{m_k}$ 
($m_i \in \ZZ_{>0}$) for $z=(z_1,z_2, \ldots, z_n) \in \CC^n$. 
Then we have 
\begin{equation}
\zeta_{h,0}(t)=
\begin{cases}
\ 1-t^{m_1} & (k=1) \\
& \\
\ 1 & (k>1) \\
\end{cases}
\end{equation} 
\end{lemma}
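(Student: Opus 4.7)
My plan is to compute the Milnor fiber $F_0$ of $h$ at the origin together with the Milnor monodromies $\Phi_{j,0}$, and then apply Definition \ref{dfn:2-79} via Theorem \ref{thm.7.2.004}. First I would reduce to $n=k$: since $h$ depends only on $z_1,\dots,z_k$, the projection $\CC^n\to\CC^k$ restricts to a trivial bundle on $F_0$ with contractible fibers (small open balls in $\CC^{n-k}$), so both the cohomologies and the monodromies coincide with those of $h|_{\CC^k}$.

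For $k=1$, the equation $z_1^{m_1}=t$ has $m_1$ simple roots for $t\neq 0$, so $F_0$ is the disjoint union of $m_1$ contractible pieces, giving $H^0(F_0;\CC)\simeq\CC^{m_1}$ and $H^j(F_0;\CC)=0$ for $j\geq1$. As $t$ traverses a small loop counterclockwise, the roots $t^{1/m_1}\zeta^j$ (with $\zeta=e^{2\pi i/m_1}$) are cyclically permuted, so $\Phi_{0,0}$ is a cyclic permutation matrix of order $m_1$ whose eigenvalues are the $m_1$-th roots of unity. Hence $\det(\id-t\Phi_{0,0})=\prod_{j=0}^{m_1-1}(1-\zeta^j t)=1-t^{m_1}$, giving $\zeta_{h,0}(t)=1-t^{m_1}$.

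For $k\geq 2$ (with $n=k$), I would exploit a non-trivial $S^1$-symmetry of $h$. The $S^1$-action on $\CC^k$ given by $\lambda\cdot(z_1,z_2,z_3,\dots,z_k)=(\lambda^{m_2}z_1,\lambda^{-m_1}z_2,z_3,\dots,z_k)$ preserves $h$ (since $m_1m_2-m_2m_1=0$) and the ball $B(0,\varepsilon)$ (since $|\lambda|=1$), hence acts on $F_0$ with finite stabilizers of order dividing $\gcd(m_1,m_2)$. Averaging any vector field trivializing $h$ over the compact group $S^1$, one can arrange that the Milnor monodromy commutes with this action, so $p\colon F_0\to F_0/S^1$ is a Seifert circle fibration compatible with the monodromy. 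The Leray spectral sequence then yields $E_2^{p,q}=H^p(F_0/S^1;\CC)\otimes H^q(S^1;\CC)$ with $\Phi$ acting as $\bar\Phi^{\,p}\otimes\id$, so the alternating product $\prod_{p,q}\det(\id-t\Phi^{p,q})^{(-1)^{p+q}}$ collapses to $\prod_p\det(\id-t\bar\Phi^{\,p})^{(-1)^p\chi(S^1)}=1$, because $\chi(S^1)=0$.

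The main obstacle is this $k\geq 2$ case, where one must verify that the individual factors $\det(\id-t\Phi_{j,0})$ really cancel in all cohomological degrees, not merely that $\chi(F_0)=0$. A cleaner alternative, if one is willing to invoke A'Campo's general formula $\zeta_{f,0}(t)=\prod_i(1-t^{N_i})^{\chi(\mathring{E}_i\cap\pi^{-1}(0))}$ associated with a log resolution $\pi\colon Y\to\CC^n$ of $f^{-1}(0)$, is as follows: here $h^{-1}(0)=\bigcup_{i=1}^k\{z_i=0\}$ is already simple normal crossing, so $\pi=\id$ with divisors $E_i=\{z_i=0\}$ of multiplicity $m_i$. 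Then $\mathring{E}_i\cap\{0\}=\emptyset$ whenever $k\geq 2$ (some coordinate $z_j$ with $j\ne i$ is forced to be nonzero), while for $k=1$ we have $\mathring{E}_1\ni 0$ contributing a single factor $(1-t^{m_1})^1$, recovering both cases at once.
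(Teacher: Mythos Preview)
The paper does not supply its own proof of this lemma; it simply attributes the result to A'Campo \cite{A'Campo} and refers to Oka \cite[Chapter~I, Example~(3.7)]{Oka97} for details. Your direct argument therefore goes beyond what the paper offers, and there is nothing to compare against.

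Your treatment of the case $k=1$ and the reduction from $\CC^n$ to $\CC^k$ are correct. For $k\geq 2$, the $S^1$-equivariant argument is sound: the action preserves both $h$ and the round ball, and on $F_0$ (where every $z_i\neq 0$) the isotropy is exactly $\mu_{\gcd(m_1,m_2)}$ at \emph{every} point, so after dividing by this ineffective kernel the action is free and $p\colon F_0\to F_0/S^1$ is a genuine principal $S^1$-bundle, not merely a Seifert fibration. Averaging the lifting vector field of the Milnor fibration over $S^1$ makes the geometric monodromy $S^1$-equivariant, and your Leray computation then goes through. One small remark: you are implicitly using that the alternating product $\prod_{p,q}\det(\id-t\,\Phi|_{E_r^{p,q}})^{(-1)^{p+q}}$ is the same on every page of the spectral sequence; this holds because each differential $d_r$ yields $\Phi$-equivariant short exact sequences on which this multiplicative invariant is trivial, so computing on $E_2$ is legitimate without knowing degeneration.

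One caution about your alternative via A'Campo's general resolution formula: that formula is customarily \emph{proved} from the present lemma (the normal crossing local computation) together with the push-forward theorem for zeta functions (Theorem~\ref{prp:2-99} here). Applying it with $\pi=\id$ is therefore circular in spirit, though not formally so once the formula is taken as a black box. Your $S^1$-action argument is the honest standalone proof.
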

The classical notion of monodromy zeta functions can be generalized as follows.

\begin{definition}\label{dfn:2-8}
For a constructible sheaf $\F \in \Dbc(\CC_X)$ on $X$ and 
a point $x \in X_0=f^{-1}(0)$ of $X_0$ we define 
the monodromy zeta function $\zeta_{f,x}(\F) \in \CC(t)^*$ of 
$\F$ along $f$ at $x$ by 
\begin{equation}
\zeta_{f,x}(\F)(t):=\prod_{j \in \ZZ} \ 
\Bigl\{ \det\left(\id -t \cdot \Phi(\F)_{j,x}\right) \Bigr\}^{(-1)^j} 
\quad \in \CC(t)^*, 
\end{equation}
where the $\CC$-linear maps 
$\Phi(\F)_{j,x} \colon H^j(\psi_f(\F))_x \simto H^j(\psi_f(\F))_x$ are induced by 
$\Phi(\F)$.
\end{definition}
In the situation of Definition \ref{dfn:2-8}, for a 
point $x \in X_0=f^{-1}(0)$ of $X_0$ we set 
\begin{equation}
\widetilde{\zeta}_{f,x}(\F)(t):=\prod_{j \in \ZZ} \ 
\Bigl\{ \det\left( t \cdot \id - \Phi(\F)_{j,x}\right) \Bigr\}^{(-1)^j} 
\quad \in \CC(t)^*
\end{equation}
and 
\begin{equation}
\chi_x( \psi_f(\F) )
:=\sum_{j \in \ZZ} \ (-1)^j {\rm dim} H^j \psi_f(\F)_x 
= {\rm deg} \zeta_{f,x} ( \F ) \qquad \in \ZZ. 
\end{equation}
Then similarly we obtain the following equality 
\begin{equation}\label{invrel} 
\widetilde{\zeta}_{f,x}(\F)(t) = t^{\chi_x( \psi_f(\F) )} \cdot 
\zeta_{f,x}(\F)  \Bigl( \frac{1}{t} \Bigr). 
\end{equation}
If for some $p \in \ZZ$ we have $H^j \psi_f(\F)_x \simeq 0$ 
($j \not= p$) and $H^p \psi_f(\F)_x \not= 0$, then by 
\eqref{invrel} we obtain the characteristic polynomial 
\begin{equation}
\det\left( t \cdot \id - \Phi(\F)_{p,x}\right) \qquad 
\in \ZZ
\end{equation}
of the only non-trivial monodromy operator 
\begin{equation}
\Phi(\F)_{p,x} \colon H^p(\psi_f(\F))_x \simto H^p(\psi_f(\F))_x
\end{equation}
by multiplying some powers of $t$ to 
$\Bigl\{ \zeta_{f,x}(\F) \Bigl( \frac{1}{t} \Bigr) \Bigr\}^{(-1)^p}$. 
As in A'Campo \cite{A'Campo} (see also Oka \cite[Chapter I]{Oka97}) 
we can easily prove the following very useful result. 

\begin{lemma}\label{lemma:2-8}
Let $\F^{\prime} \longrightarrow \F \longrightarrow \F^{\prime \prime}
 \overset{+1}{\longrightarrow}$ be a distinguished triangle 
in $\Dbc (\CC_X)$. Then for any point $x \in X_0=f^{-1}(0)$ of $X_0$ we have 
\begin{equation}
\zeta_{f,x}( \F )(t)= \zeta_{f,x}( \F^{\prime} )(t) 
\cdot \zeta_{f,x}( \F^{\prime \prime} )(t). 
\end{equation}
\end{lemma}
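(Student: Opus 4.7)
The plan is to reduce the claim to the standard multiplicativity of characteristic polynomials on long exact sequences, via the functoriality of the nearby cycle functor and its monodromy.

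First, I would apply $\psi_f$ to the distinguished triangle $\F' \to \F \to \F'' \xrightarrow{+1}$. Since $\psi_f \colon \Dbc(\CC_X) \to \Dbc(\CC_{X_0})$ is a triangulated functor, this yields a distinguished triangle
\begin{equation*}
\psi_f(\F') \longrightarrow \psi_f(\F) \longrightarrow \psi_f(\F'') \overset{+1}{\longrightarrow}
\end{equation*}
in $\Dbc(\CC_{X_0})$. Moreover, the monodromy automorphism $\Phi(\cdot)$ is functorial in its argument, so the three monodromies $\Phi(\F')$, $\Phi(\F)$, $\Phi(\F'')$ fit into a morphism from this distinguished triangle to itself.

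Next, I would take stalks at the point $x \in X_0$ and pass to the associated long exact sequence of cohomology:
\begin{equation*}
\cdots \to H^{j-1}\psi_f(\F'')_x \to H^j\psi_f(\F')_x \to H^j\psi_f(\F)_x \to H^j\psi_f(\F'')_x \to H^{j+1}\psi_f(\F')_x \to \cdots
\end{equation*}
By the functoriality just noted, the endomorphisms $\Phi(\F')_{j,x}$, $\Phi(\F)_{j,x}$, $\Phi(\F'')_{j,x}$ form a commutative ladder with this long exact sequence. Each vector space appearing is finite-dimensional because $\psi_f$ preserves constructibility.

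Finally, I would invoke the elementary linear-algebra fact: given a bounded long exact sequence of finite-dimensional $\CC$-vector spaces equipped with endomorphisms compatible with all the arrows, the alternating product of $\det(\id - t\phi_i)$ over all terms equals $1$. This fact itself follows by splitting the long exact sequence into short exact sequences and using the multiplicativity $\det(\id-t\phi_V) = \det(\id-t\phi_{V'})\det(\id-t\phi_{V''})$ for a compatible short exact sequence $0 \to V' \to V \to V'' \to 0$, which is standard (choose a basis adapted to the filtration). Rearranging the resulting identity according to which term of the triangle it comes from gives
\begin{equation*}
\zeta_{f,x}(\F)(t) = \zeta_{f,x}(\F')(t) \cdot \zeta_{f,x}(\F'')(t),
\end{equation*}
as desired.

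The only subtle point — and the one I would be most careful about — is the compatibility of the monodromy automorphisms $\Phi(\cdot)$ with the connecting morphisms of the long exact sequence. This is really a statement about the construction of $\Phi$: since $\Phi$ arises from a natural transformation $\psi_f \Rightarrow \psi_f$ (induced by monodromy around the puncture of $D_\eta^*$), it commutes with every morphism in $\Dbc(\CC_{X_0})$ coming from a morphism in $\Dbc(\CC_X)$, including the connecting morphism of the triangle. Once this naturality is recorded, the rest is purely formal.
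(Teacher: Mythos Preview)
Your argument is correct and is the standard proof of this multiplicativity. The paper does not actually give a proof of this lemma; it simply states that the result ``can be easily proved'' as in A'Campo \cite{A'Campo} (see also Oka \cite[Chapter I]{Oka97}). What you have written is precisely the argument one finds in those references: apply the triangulated functor $\psi_f$, take stalks to obtain a long exact sequence equipped with compatible monodromy endomorphisms, and invoke the multiplicativity of $\det(\id - t\,\cdot)$ on short exact sequences. Your attention to the naturality of the monodromy automorphism $\Phi$ (so that it commutes with the connecting maps) is exactly the point one must check, and your justification is the right one.
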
 

\begin{remark}\label{dist-lem}
Lemma \ref{lemma:2-8} holds true also for 
the rational functions $\widetilde{\zeta}_{f,x}( \cdot ) 
\in \CC (t)^*$. 
\end{remark}

For an abelian group $G$ and a complex analytic space $Z$,  
we shall say that a $G$-valued function 
$\varphi :Z \longrightarrow G$ on $Z$ is constructible if 
there exists a stratification $Z=\bigsqcup_{\alpha}Z_{\alpha}$ of $Z$ 
such that $\varphi |_{Z_{\alpha}}$ is 
constant for any $\alpha$. We denote by $\CF_G(Z)$ 
the abelian group of $G$-valued constructible functions on $Z$. 
In this paper, we consider $\CF_G(Z)$ only for 
the multiplicative group $G= \CC(t)^*$. 
Then for the complex manifold $X$ and the non-constant 
holomorphic function $f :X \longrightarrow \CC$ on it, 
by Theorem \ref{thm.7.2.006}  
we can easily see that the $\CC(t)^*$-valued function 
$\zeta_{f}(\F) :X_0 \longrightarrow \CC(t)^*$ on $X_0=f^{-1}(0)$ 
defined by 
\begin{equation}
\zeta_{f}(\F)(x):=\zeta_{f,x}(\F)(t) \quad \in \CC(t)^* \qquad 
(x \in X_0=f^{-1}(0))
\end{equation}
is constructible. 
For a $G$-valued constructible function $\varphi \colon Z \longrightarrow G$ on 
a complex analytic space $Z$, by taking 
a stratification $Z=\bigsqcup_{\alpha}Z_{\alpha}$ of 
$Z$ such that $\varphi |_{Z_{\alpha}}$ is 
constant for any $\alpha$, we set 
\begin{equation}
\int_Z \varphi :=\sum_{\alpha}\chi(Z_{\alpha}) \cdot 
\varphi (z_{\alpha}) \quad \in G, 
\end{equation}
where $\chi ( \cdot )$ stands for the 
topological Euler characteristic and $z_{\alpha}$ is a reference point in 
the stratum $Z_{\alpha}$. 
By the following lemma $\int_Z \varphi \in G$ does not depend on the choice of 
the stratification $Z=\bigsqcup_{\alpha} Z_{\alpha}$ of $Z$. 
We call it the topological (or Euler) integral of 
$\varphi$ over $Z$. 

\begin{lemma}\label{lem:2-str-1}
Let $Y$ be a complex analytic space and 
$Y=\bigsqcup_{\alpha}Y_{\alpha}$ a stratification of $Y$. 
Then we have 
\begin{equation}
\chi_{{\rm c}}(Y) = \dsum_{\alpha} \chi_{{\rm c}}(Y_{\alpha}), 
\end{equation}
where $\chi_{{\rm c}}( \cdot )$ stands for the Euler 
characteristic with compact supports. Moreover, 
for any $\alpha$ we have 
$\chi_{{\rm c}}(Y_{\alpha})=\chi (Y_{\alpha})$. 
\end{lemma}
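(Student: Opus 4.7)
The plan is to establish the additivity of $\chi_{{\rm c}}$ by induction on the number of strata, and then to deduce the identity $\chi_{{\rm c}}(Y_\alpha)=\chi(Y_\alpha)$ for each stratum from Poincar\'e duality.

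For the additivity, the basic input is the long exact sequence in compactly supported cohomology associated to the decomposition of a space $W$ into an open subset $U$ and its closed complement $Z=W\setminus U$:
\begin{equation*}
\cdots\to H^j_{{\rm c}}(U;\CC)\to H^j_{{\rm c}}(W;\CC)\to H^j_{{\rm c}}(Z;\CC)\to H^{j+1}_{{\rm c}}(U;\CC)\to\cdots
\end{equation*}
Finite-dimensionality of all these groups is guaranteed since $Y$ and all the $Y_\alpha$, being subanalytic, admit a finite triangulation compatible with the stratification. Taking alternating sums of dimensions then yields $\chi_{{\rm c}}(W)=\chi_{{\rm c}}(U)+\chi_{{\rm c}}(Z)$. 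I would then order the finitely many strata by repeatedly peeling off an open stratum: choose $Y_{\alpha_0}$ open in $Y$, set $W_1:=Y\setminus Y_{\alpha_0}$, and iterate the procedure on $W_1$. Applying the additivity at each step produces $\chi_{{\rm c}}(Y)=\sum_\alpha\chi_{{\rm c}}(Y_\alpha)$.

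For the second assertion, the key point is that each stratum $Y_\alpha$ is a complex manifold, hence an orientable real manifold of even real dimension $2d_\alpha$. Poincar\'e--Lefschetz duality then gives isomorphisms
\begin{equation*}
H^j_{{\rm c}}(Y_\alpha;\CC)\simeq H^{2d_\alpha-j}(Y_\alpha;\CC)^\ast
\end{equation*}
for every $j\in\ZZ$, and since $2d_\alpha$ is even the two resulting alternating sums of dimensions agree, giving $\chi_{{\rm c}}(Y_\alpha)=\chi(Y_\alpha)$.

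The only delicate point I anticipate is the existence of an ordering $\alpha_0,\alpha_1,\dots$ of the strata such that each $Y_{\alpha_i}$ is open inside $Y\setminus\bigsqcup_{j<i}Y_{\alpha_j}$; this is standard for finite subanalytic (or complex analytic) stratifications, but should be made explicit before the inductive step is invoked. Once this ordering is fixed, the entire argument is a formal consequence of the two long exact sequences above and routine counting of dimensions.
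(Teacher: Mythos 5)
The paper offers no proof of this lemma at all---it is invoked as a standard fact about (sub)analytic stratifications---so there is no internal argument to compare yours against; assessed on its own terms, your proof is correct and is the standard one. Additivity of $\chi_{{\rm c}}$ from the compact-support long exact sequence for an open stratum and its closed complement, iterated over the strata, together with Poincar\'e duality $H^j_{{\rm c}}(Y_\alpha;\CC)\simeq H^{2d_\alpha-j}(Y_\alpha;\CC)^\ast$ on each stratum (a complex, hence oriented and even-real-dimensional, manifold) is exactly how this is usually proved. Two points you should make explicit. First, the finiteness you rely on (finitely many strata, a finite subanalytic triangulation compatible with them) is not literally in the statement; it is what makes all the groups finite-dimensional and both Euler characteristics well defined, and it holds in every application in the paper (fibres of proper maps, algebraic strata such as $\tl{H}\setminus(D_1\cup D_2)$), so record it as a running hypothesis. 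Second, the ordering of strata that you flag as the delicate point: if the stratification satisfies the frontier condition, then the union of strata of dimension at most $k$ is closed, so listing the strata by decreasing dimension gives at each stage a stratum open in what remains; for a partition not satisfying the frontier condition, refine it to one that does, apply the previous case both to $Y$ and to each $Y_\alpha$ (the refinement induces a frontier-condition stratification of each locally closed $Y_\alpha$), and sum---this removes any circularity. With these two remarks spelled out, your argument is complete; the duality step needs no change beyond applying it componentwise when a stratum is disconnected.
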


More generally, 
for any morphism $\rho \colon Z \longrightarrow W$ of complex analytic spaces 
and any $G$-valued constructible function 
$\varphi \in \CF_G(Z)$ on $Z$, we define the 
push-forward $\int_{\rho} \varphi \in \CF_G(W)$ of 
$\varphi$ by 
\begin{equation}
\Bigl( \int_{\rho} \varphi \Bigr) (w) :=\int_{\rho^{-1}(w)} \varphi \qquad 
(w \in W). 
\end{equation}
We thus obtain a homomorphism 
\begin{equation}
\int_{\rho} : \CF_G(Z) \longrightarrow \CF_G(W)
\end{equation}
of abelian groups. Then we have the 
following very useful result.   

\begin{theorem}\label{prp:2-99}
{\rm (Dimca \cite[Proposition 4.2.11]{Dim04} 
and Sch\"urmann \cite[Chapter 2]{Sch03})}  
Let $\rho \colon Y  \longrightarrow X$ be a 
proper morphism of complex manifolds  
and $f\colon X  \longrightarrow \CC$ a non-constant 
holomorphic function on $X$. We set 
$g:=f\circ\rho\colon Y  \longrightarrow \CC$ and 
\begin{equation}
X_0=f^{-1}(0)\subset X,\quad Y_0=g^{-1}(0)=\rho^{-1}(X_0)\subset Y. 
\end{equation}
Let $\rho|_{Y_0}\colon Y_0 \longrightarrow X_0$ be the 
restriction of $\rho$ to $Y_0\subset Y$ and 
assume that $\G\in \Db (\CC_Y)$ is constructible. 
Then we have 
\begin{equation}
\int_{ \rho|_{Y_0}} \zeta_g(\G) =\zeta_f({\rm R} \rho_*\G)
\end{equation}
in $\CF_{\CC(t)^*}(X_0)$, where $\int_{\rho|_{Y_0}}\colon \CF_{\CC(t)^*}(Y_0) 
\longrightarrow \CF_{\CC(t)^*}(X_0)$ is the push-forward of 
$\CC(t)^*$-valued constructible functions by 
$\rho|_{Y_0} \colon Y_0 \longrightarrow X_0$.
\end{theorem}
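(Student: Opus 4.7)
The plan is to verify the equality of the two constructible functions on $X_0$ pointwise, by fixing $x\in X_0$ and reducing the statement to a Lefschetz-type trace formula for constructible sheaves equipped with a monodromy endomorphism over the compact fiber $\rho^{-1}(x)$. The two ingredients are (i) the compatibility of nearby cycles with proper direct images, and (ii) a dévissage of the trace formula along a stratification using the multiplicativity of monodromy zeta functions in distinguished triangles (Lemma \ref{lemma:2-8}) together with the additivity of $\chi_{{\rm c}}$ (Lemma \ref{lem:2-str-1}).

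\textbf{Step 1 (proper base change for nearby cycles).} Since $\rho\colon Y\to X$ is proper, one has a canonical isomorphism
\begin{equation*}
\psi_f({\rm R}\rho_*\G) \simeq {\rm R}(\rho|_{Y_0})_* \psi_g(\G)
\end{equation*}
in $\Dbc(\CC_{X_0})$ which, moreover, intertwines the monodromy automorphism $\Phi({\rm R}\rho_*\G)$ on the left with the automorphism induced by $\Phi(\G)$ on the right. Taking stalks at $x\in X_0$ and using properness, one gets
\begin{equation*}
\psi_f({\rm R}\rho_*\G)_x \simeq {\rm R}\Gamma\bigl(\rho^{-1}(x);\,\psi_g(\G)|_{\rho^{-1}(x)}\bigr),
\end{equation*}
compatibly with the monodromies. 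This identifies $\zeta_{f,x}({\rm R}\rho_*\G)$ with the zeta function of the monodromy acting on the hypercohomology of the constructible complex $\SH := \psi_g(\G)|_{\rho^{-1}(x)}$ over the compact space $Z := \rho^{-1}(x)$.

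\textbf{Step 2 (reduction to a Lefschetz-type trace formula).} It now suffices to prove the following general statement: if $Z$ is a compact complex analytic space and $\SH\in\Dbc(\CC_Z)$ is equipped with an automorphism $\Phi\colon\SH\simto\SH$, and if we set $\zeta_\Phi(\SH)(z):=\prod_j\det(\id-t\cdot\Phi^j_z)^{(-1)^j}\in\CC(t)^*$ for $z\in Z$, then
\begin{equation*}
\int_Z \zeta_\Phi(\SH) \;=\; \prod_{j}\det\bigl(\id - t\cdot H^j(\Phi)\bigr)^{(-1)^j}
\end{equation*}
where the right-hand side is computed on ${\rm R}\Gamma(Z;\SH)$. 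Applied to $Z=\rho^{-1}(x)$ and $\SH$ as above, the right-hand side is $\zeta_{f,x}({\rm R}\rho_*\G)$ by Step 1, and the left-hand side is precisely $\bigl(\int_{\rho|_{Y_0}}\zeta_g(\G)\bigr)(x)$, yielding the theorem.

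\textbf{Step 3 (dévissage over a stratification).} To prove the general trace formula, choose a Whitney stratification $Z=\bigsqcup_\alpha Z_\alpha$ adapted to $(\SH,\Phi)$, i.e.\ each cohomology sheaf $H^j(\SH)|_{Z_\alpha}$ is a local system and is stable under $\Phi$. Listing the strata in increasing dimension and repeatedly using the excision distinguished triangles $(j_{Z_\alpha})_!\SH|_{Z_\alpha}\to\SH|_{\overline{Z_\alpha}\cup\cdots}\to\SH|_{\overline{Z_\alpha}\cup\cdots\setminus Z_\alpha}\xrightarrow{+1}$ together with the multiplicativity of zeta functions in distinguished triangles (Lemma \ref{lemma:2-8}) and the additivity $\chi(Z)=\sum_\alpha\chi_{{\rm c}}(Z_\alpha)$ (Lemma \ref{lem:2-str-1}), the identity reduces to the stratum-wise claim
\begin{equation*}
\prod_{j}\det\bigl(\id - t\cdot H^j_c(\Phi|_{Z_\alpha})\bigr)^{(-1)^j}
\;=\; \zeta_\Phi(\SH)(y_\alpha)^{\chi_{{\rm c}}(Z_\alpha)}
\end{equation*}
for any base point $y_\alpha\in Z_\alpha$. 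Note that the right-hand side is well-defined because $\zeta_\Phi(\SH)$ is constant on the connected stratum $Z_\alpha$.

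\textbf{Step 4 (main obstacle: Lefschetz multiplicativity on a single stratum).} The stratum-level identity is the hardest step. After passing to the algebraic closure of $\CC$ and decomposing each local system $H^j(\SH)|_{Z_\alpha}$ into generalized eigensubsystems of $\Phi$ corresponding to eigenvalues $\lambda\in\CC^*$, the question reduces to showing that for each eigen-local-system $\SL_\lambda$ of rank $r_\lambda$ on the complex manifold $Z_\alpha$ one has $\chi_{{\rm c}}\bigl({\rm R}\Gamma_c(Z_\alpha;\SL_\lambda)\bigr)=r_\lambda\cdot\chi_{{\rm c}}(Z_\alpha)$ with the unipotent-part traces multiplying correctly — the classical Euler-characteristic identity for local systems on analytic manifolds, which can be established either by induction on a triangulation compatible with the local system, or by appealing directly to Sch\"urmann's framework of characteristic cycles and Lefschetz numbers for constructible sheaves with endomorphisms (see Sch\"urmann \cite[Chapter 2]{Sch03}, which packages this calculation). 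This completes the reduction, and hence the theorem.
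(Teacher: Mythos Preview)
Your proof is correct and follows the standard approach found in the references the paper cites; note that the paper itself does not give a proof of this theorem but simply refers the reader to Dimca \cite[p.170--173]{Dim04} and Sch\"urmann \cite[Chapter 2]{Sch03}. The ingredients you use---proper base change $\psi_f({\rm R}\rho_*\G)\simeq{\rm R}(\rho|_{Y_0})_*\psi_g(\G)$ compatible with monodromy, followed by a d\'evissage over a stratification of the compact fiber reducing to the identity $\chi_c(Z_\alpha;L)=\rank(L)\cdot\chi_c(Z_\alpha)$ for local systems---are precisely those underlying the arguments in the cited sources.
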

For the proof of this theorem, 
see for example, \cite[p.170-173]{Dim04} and 
\cite[Chapter 2]{Sch03}. 

\begin{corollary}\label{cpr:2-99}
In the situation of Theorem \ref{prp:2-99}, 
for a point $x \in X_0=f^{-1}(0)$ of $X_0$ let 
$\rho^{-1}(x)= \sqcup_{\alpha} Z_{\alpha}$ be 
a stratification of $Z:= \rho^{-1}(x) \subset 
Y_0=g^{-1}(0)$ such that for any $\alpha$ 
the $\CC(t)^*$-valued constructible function 
$\zeta_{g}( \G ) \in \CF_{\CC(t)^*}(Y_0)$ is 
constant on $Z_{\alpha}$. Then we have 
\begin{equation}
\zeta_{f,x}( {\rm R} \rho_* \G )(t)= 
\dprod_{\alpha} \Bigl\{
\zeta_{g, y_{\alpha}}( \G )(t) 
\Bigr\}^{\chi ( Z_{\alpha} )}, 
\end{equation}
where $y_{\alpha}$ is a reference point of $Z_{\alpha}$. 
\end{corollary}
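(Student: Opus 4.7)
The plan is to apply Theorem \ref{prp:2-99} directly and then unwind the definitions of the push-forward of a constructible function and the topological Euler integral, being careful that the coefficient group $G=\CC(t)^*$ is written multiplicatively rather than additively.

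First, I would invoke Theorem \ref{prp:2-99} to obtain the equality of constructible functions
\begin{equation*}
\zeta_f({\rm R}\rho_*\G) \ = \ \int_{\rho|_{Y_0}} \zeta_g(\G) \qquad \in \CF_{\CC(t)^*}(X_0).
\end{equation*}
Evaluating both sides at our fixed point $x\in X_0$ gives
\begin{equation*}
\zeta_{f,x}({\rm R}\rho_*\G)(t) \ = \ \Bigl(\int_{\rho|_{Y_0}} \zeta_g(\G)\Bigr)(x).
\end{equation*}

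Second, by the very definition of the push-forward of a $\CC(t)^*$-valued constructible function, the right-hand side equals the topological Euler integral of $\zeta_g(\G)$ over the analytic subspace $\rho^{-1}(x) \subset Y_0$:
\begin{equation*}
\Bigl(\int_{\rho|_{Y_0}} \zeta_g(\G)\Bigr)(x) \ = \ \int_{\rho^{-1}(x)} \zeta_g(\G).
\end{equation*}
Since $\zeta_g(\G)$ is a $\CC(t)^*$-valued constructible function on $Y_0$, its restriction to $\rho^{-1}(x)$ is constructible, and by hypothesis it is constant on every stratum $Z_\alpha$ of the given stratification $\rho^{-1}(x) = \sqcup_\alpha Z_\alpha$ with value $\zeta_{g,y_\alpha}(\G)(t)$.

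Third, I would apply the definition of the Euler integral with respect to this stratification, taking into account that the target group $\CC(t)^*$ is multiplicative: the formal sum $\sum_\alpha \chi(Z_\alpha)\cdot \varphi(z_\alpha)$ in the additive notation of an abelian group becomes the product $\prod_\alpha \varphi(z_\alpha)^{\chi(Z_\alpha)}$ in $\CC(t)^*$. Combining the three steps yields
\begin{equation*}
\zeta_{f,x}({\rm R}\rho_*\G)(t) \ = \ \prod_\alpha \bigl\{\zeta_{g,y_\alpha}(\G)(t)\bigr\}^{\chi(Z_\alpha)},
\end{equation*}
which is the desired formula. There is no real obstacle here: the only point that needs attention is the translation between additive and multiplicative notation for the coefficient group, and the well-definedness of the Euler integral independently of the chosen stratification, which follows from Lemma \ref{lem:2-str-1} and is already part of the setup preceding Theorem \ref{prp:2-99}.
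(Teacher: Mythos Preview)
Your proposal is correct and is precisely the intended argument: the paper states this corollary without proof because it follows immediately from Theorem~\ref{prp:2-99} by evaluating at $x$ and unwinding the definitions of the push-forward and the topological integral. Your careful remark about the multiplicative notation for $\CC(t)^\ast$ is the only point requiring attention, and you handle it correctly.
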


The following two lemmas are useful to calculate monodromy 
zeta functions of constructible sheaves by 
Corollary \ref{cpr:2-99}. Indeed, 
by resolutions of singularities, we can reduce the 
problem to the situations treated in them. 

\begin{lemma}\label{lem:2-99}
{\rm (\cite[Proposition 5.2]{MT11a})} 
For $1 \leq k \leq n$ let $L$ be a $\CC$-local system 
of rank $r>0$ on $( \CC^*)^k \times \CC^{n-k} \subset \CC^n$. 
Let $j: ( \CC^*)^k \times \CC^{n-k} \hookrightarrow \CC^n$
be the inclusion map and $h: \CC^n \longrightarrow \CC$ 
the function on $\CC^n$ defined by $h(z)=z_1^m$ 
($m \in \ZZ_{>0}$) for $z=(z_1,z_2, \ldots, z_n) \in \CC^n$. 
\begin{enumerate}
\item[\rm{(i)}] Assume moreover that $k>1$. Then we have 
$\psi_h(j_! L)_0 \simeq 0$ and hence $\zeta_{h,0}(j_! L )(t)=1$.  
\item[\rm{(ii)}]  Assume moreover that $k=1$ and 
let $A \in {\rm GL}_r( \CC )$ be the monodromy matrix of 
$L$ along the loop $C:= \{ (e^{\sqrt{-1} \theta}, 0, \ldots, 0) 
\ | \ 0 \leq \theta \leq 2 \pi \}$ in 
$( \CC^*)^k \times \CC^{n-k}= 
\CC^* \times \CC^{n-1}$ (defined up to conjugacy). 
Then we have 
\begin{equation}
\zeta_{h,0}(j_! L )(t)= 
{\rm det} \bigl( {\rm id}- t^m A \bigr) \quad \in \CC (t)^*. 
\end{equation}
\end{enumerate}
\end{lemma}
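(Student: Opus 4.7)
The plan is to apply Theorem \ref{thm.7.2.006}, which identifies $H^\bullet\psi_h(j_!L)_0$ with the cohomology $H^\bullet(F_0;j_!L)$ of the Milnor fiber $F_0=h^{-1}(t)\cap B(0;\varepsilon)$ of $h(z)=z_1^m$ at the origin. For $0<|t|\ll\varepsilon\ll1$, the fiber $F_0$ is the disjoint union of $m$ connected components $F_0^{(1)},\ldots,F_0^{(m)}$, indexed by the $m$th roots $\zeta_1,\ldots,\zeta_m$ of $t$; the projection $(z_1,\ldots,z_n)\mapsto(z_2,\ldots,z_n)$ identifies each $F_0^{(a)}$ with a small open ball $B^{n-1}\subset\CC^{n-1}$ centered at the origin. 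Since $z_1=\zeta_a\neq0$ is automatic on $F_0^{(a)}$, the restriction of $j_!L$ there is the extension by zero of a rank-$r$ local system from $B^{n-1}\cap\{z_2\cdots z_k\neq0\}$.

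For part (ii) with $k=1$, the ball $B^{n-1}$ already lies in the open stratum $U=\CC^*\times\CC^{n-1}$, so the restriction of $L$ to each simply connected $F_0^{(a)}$ is a trivial local system of rank $r$; hence $H^0(F_0;j_!L)\simeq\CC^{mr}$ and all higher cohomologies vanish. To identify the Milnor monodromy $M$ on $\CC^{mr}$, I would track how the loop $t=t_0 e^{2\pi i s}$ ($s\colon 0\to 1$) rotates the $m$th roots $\zeta_a e^{2\pi is/m}$, inducing the cyclic permutation $F_0^{(a)}\mapsto F_0^{(a+1)}$; after $m$ iterations each component returns to itself and the $z_1$-path traces a single loop once around $0\in\CC^*$, so $M^m$ is conjugate to the block-diagonal matrix whose blocks are all equal to the monodromy $A$ of $L$. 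In a suitable basis $M$ assumes the block companion form with $A$ in the top-right corner and identity matrices just below the diagonal, whose characteristic polynomial is $\det(tI-M)=\det(t^mI-A)$; a standard substitution yields $\det(\id-tM)=\det(\id-t^m A)$, and since only degree zero contributes, this is $\zeta_{h,0}(j_!L)(t)$.

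For part (i) with $k\geq 2$, I aim to prove $\rsect(F_0^{(a)};j_!L)\simeq0$ on every component. The projection $\pi\colon B^{n-1}\to B_1\subset\CC^{k-1}$ onto $(z_2,\ldots,z_k)$ has contractible fibers, each lying either entirely in $U$ (over $(B_1)^*:=B_1\cap\{z_2\cdots z_k\neq 0\}$) or entirely outside $U$. Because $L$ is pulled back from a local system on $(\CC^*)^k$ along the projection $U\to(\CC^*)^k$ (using that $\CC^{n-k}$ is simply connected), the direct image $\rmR\pi_*(j_!L)$ is concentrated in degree zero and equals $j'_!L''$ for the inclusion $j'\colon(B_1)^*\hookrightarrow B_1$ and a rank-$r$ local system $L''$ on $(B_1)^*$; it then suffices to prove $\rsect(B_1;j'_!L'')\simeq 0$, which I would show by induction on $k-1\geq 1$. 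The base case $k-1=1$ uses the distinguished triangle $j'_!L''\to\rmR j'_*L''\to i_*i^{-1}\rmR j'_*L''\to[1]$ with $i\colon\{0\}\hookrightarrow B_1$: the restriction map $\rsect(B_1;\rmR j'_*L'')\to(\rmR j'_*L'')_0$ is an isomorphism because both sides compute $\rsect(\CC^*;L'')$ via the radial deformation retract of $B_1\setminus\{0\}$ onto an arbitrarily small punctured neighborhood of $0$, and the long exact sequence forces the vanishing. The inductive step projects $B_1\subset\CC^{k-1}$ onto its first $k-2$ coordinates: the fibers are disks in the remaining coordinate $z_k$, and applying the base case fiberwise (to the local system with monodromy $A_k$ around $z_k=0$) yields the stalkwise vanishing of $\rmR\pi'_*(j'_!L'')$, so Leray gives the required $\rsect(B_1;j'_!L'')\simeq 0$.

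The main obstacle is the local-system case of part (i), where one must handle the conic structure of $B_1$ at its vertex $0$ simultaneously with the nontrivial monodromies of $L''$ around each of the $k-1$ coordinate hyperplanes. The decisive input is the base-case isomorphism $\rsect(B_1;\rmR j'_*L'')\simeq(\rmR j'_*L'')_0$, valid for arbitrary monodromy; once this geometric fact is secured, the inductive projection argument becomes routine, and one never needs to track the individual eigenvalues of the monodromies.
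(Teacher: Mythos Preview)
The paper itself does not prove this lemma; it is quoted from \cite[Proposition 5.2]{MT11a} without argument, so there is no in-paper proof to compare against.

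Your treatment of part (ii) is correct, as is your reduction of part (i) to the vanishing of $\rsect(B_1;j'_!L'')$ for $B_1$ a small polydisc in $\CC^{k-1}$, together with your base case $k-1=1$.

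The inductive step, however, has a genuine gap. You claim that applying the base case fiberwise yields the stalkwise vanishing of $\rmR\pi'_\ast(j'_!L'')$, implicitly identifying the stalk of $\rmR\pi'_\ast$ at a point $p\in B_1'$ with the cohomology of the fiber $(\pi')^{-1}(p)$. But $\pi'$ is not proper, and this identification fails precisely at points of $B_1'$ lying on some coordinate hyperplane. Concretely, at the origin $0\in B_1'$ the stalk is the limit, over shrinking polydisc neighbourhoods $V\ni 0$, of $\rsect\bigl((\pi')^{-1}(V);j'_!L''\bigr)$; since the radial retraction of $B_1$ onto $(\pi')^{-1}(V)$ respects the coordinate stratification and hence the sheaf $j'_!L''$, each of these groups equals $\rsect(B_1;j'_!L'')$ itself --- the very quantity you want to prove is zero. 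The fiberwise cohomology over $0$ (which is trivially $0$, as $j'_!L''$ vanishes on the whole fiber $(\pi')^{-1}(0)$) tells you nothing about this stalk, and the argument is circular there.

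A clean repair avoids the pushforward altogether. Since the fundamental group of the punctured polydisc is $\ZZ^{k-1}$, the commuting monodromies $A_2,\ldots,A_k$ can be simultaneously upper-triangularized over $\CC$, so $L''$ admits a filtration with rank-one graded pieces. A rank-one local system on the punctured polydisc is an external product $L_{\lambda_2}\boxtimes\cdots\boxtimes L_{\lambda_k}$, and $j'_!$ of it is the external product of the one-variable extensions by zero; K\"unneth together with your (correct) base case then gives the vanishing for each graded piece, and the long exact sequences of the filtration propagate it to arbitrary $L''$.
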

The following result, which is a 
generalization of A'Campo's lemma i.e. 
Lemma \ref{lem:2-ac-21} 
(see also Oka \cite[Example (3.7)]{Oka97}) 
to constructible sheaves. 

\begin{lemma}\label{lem:2-999}
{\rm (\cite[Proposition 5.3]{MT11a})} 
For $0 \leq k \leq n$ let $L$ be a $\CC$-local system 
of rank $r>0$ on $( \CC^*)^k \times \CC^{n-k} \subset \CC^n$. 
Let $j: ( \CC^*)^k \times \CC^{n-k} \hookrightarrow \CC^n$
be the inclusion map and $h: \CC^n \longrightarrow \CC$ 
the function on $\CC^n$ defined by $h(z)=z_1^{m_1}z_2^{m_2} 
\cdot z_n^{m_n}$ 
($m_i \in \ZZ_{\geq 0}$) for $z=(z_1,z_2, \ldots, z_n) \in \CC^n$. 
Assume that $\sharp \{ 1 \leq i \leq n \ | \ m_i>0 \} 
\geq 2$. Then we have $\zeta_{h,0}(j_! L )(t)=1$.  
\end{lemma}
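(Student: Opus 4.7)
My plan is to adapt A'Campo's classical resolution-of-singularities approach to the constructible sheaf setting. The idea is to use a toric modification $\rho \colon Y \to X=\CC^n$ to reduce $h$ to a locally monomial function with normal crossings support, and then apply the pushforward formula for monodromy zeta functions (Theorem \ref{prp:2-99}) together with Lemma \ref{lem:2-99} to compute the local contributions. Since $h$ is itself a monomial, an appropriate toric modification can be found by taking any smooth regular subdivision of the positive orthant fan of $\NN^n$. Such a $\rho$ can be chosen to be an isomorphism over the open torus, and in particular over $U := (\CC^*)^k \times \CC^{n-k}$, and such that $g := h \circ \rho$ has simple normal crossings zero divisor in a neighborhood of $\rho^{-1}(0)$.

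The steps are as follows. First, since $\rho$ restricts to an isomorphism $\rho^{-1}(U) \simto U$, the pullback $\G := \rho^{-1}(j_! L)$ is of the form $j'_! L'$ with $j' \colon \rho^{-1}(U) \hookrightarrow Y$ and $L'$ the transported local system, and one checks via proper base change that $R \rho_* \G \simeq j_! L$. Next, I apply Corollary \ref{cpr:2-99} to a stratification of $\rho^{-1}(0)$ by the intersection pattern with the components of the normal crossings divisor $g^{-1}(0)$ and the strict transforms of $\{z_i=0\}_{i\leq k}$. At any point $y$ where two or more components of $g^{-1}(0)$ meet, $g$ is locally of the form $w_1^{a_1} \cdots w_{\ell}^{a_\ell}$ with $\ell \geq 2$ and $a_i > 0$, so Lemma \ref{lem:2-99}(i) yields trivial local contribution. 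At a smooth point of a single component $E_\alpha$, Lemma \ref{lem:2-99}(ii) yields a contribution of the form $\det(\id - t^{a_\alpha} A_\alpha)$. The contribution from the strict transforms of $\{z_i=0\}_{i \leq k}$ vanishes since $j'_! L'$ is zero there, so the only nontrivial factors come from smooth points of the exceptional toric divisors $E_\alpha$. The overall zeta function becomes a product $\prod_\alpha \det(\id - t^{a_\alpha} A_\alpha)^{\chi(E_\alpha^\circ \cap \rho^{-1}(0))}$ over exceptional components, where $E_\alpha^\circ$ denotes the smooth part of $E_\alpha$ inside the crossings divisor.

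The main obstacle is the final vanishing step: showing that $\chi(E_\alpha^\circ \cap \rho^{-1}(0)) = 0$ for every exceptional component. This is where the hypothesis $\sharp \{i : m_i > 0\} \geq 2$ enters crucially. Because $h$ is a monomial whose Newton support has dimension at least two in this sense, for each primitive ray generator $v$ in our fan the quotient lattice is nontrivial and contains a subtorus $\CC^*$ whose action preserves $E_\alpha^\circ \cap \rho^{-1}(0)$ freely, forcing the Euler characteristic to vanish. Verifying this compatibility carefully (and ensuring the subtorus action really does preserve the relevant stratum) is the technical heart of the argument. As an alternative route, one could try to bypass the resolution entirely by directly exploiting the free $S^1$-action on the Milnor fiber $\{h=t\}$ induced by rotating two coordinates $z_i,z_j$ (with $m_i,m_j>0$) in opposite directions, combined with Theorem \ref{thm.7.2.006}; this action, however, does not automatically preserve the local system $L$, so one would need to average over it, making the toric resolution approach ultimately cleaner.
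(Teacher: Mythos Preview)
The paper does not give its own proof of this lemma; it is quoted from \cite[Proposition~5.3]{MT11a}. Judged on its own, your proposal has a genuine circularity.

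The problem is at the crossing strata. You say that where $g$ is locally $w_1^{a_1}\cdots w_\ell^{a_\ell}$ with $\ell\geq 2$, Lemma~\ref{lem:2-99}(i) gives a trivial local contribution. But Lemma~\ref{lem:2-99} is stated only for the function $h(z)=z_1^m$, a \emph{single} power; its part~(i) concerns the situation where the local system lives on $(\CC^*)^k\times\CC^{n-k}$ with $k\geq 2$, not where the function is a product of several coordinate powers. At a crossing point your local model is precisely a local system extended by zero across several coordinate hyperplanes together with a product monomial in $\geq 2$ variables, and showing $\zeta=1$ there is exactly the content of Lemma~\ref{lem:2-999} itself. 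The nearby cycles do not even vanish at such points in general: already for $n=2$, $g=w_1w_2$ and a local system on $(\CC^*)^2$ with commuting monodromies $A_1,A_2$, the Milnor fiber is an annulus contained in $(\CC^*)^2$ and the stalk of $\psi_g$ computes $\ker(A_1A_2^{-1}-\id)$ in degree $0$ and $\mathrm{coker}(A_1A_2^{-1}-\id)$ in degree $1$, typically nonzero.

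More structurally, since $h$ is already a monomial its zero locus is already simple normal crossings, so a toric modification does not simplify the local model; a nontrivial $\rho$ only redistributes the same product-monomial situation over $\rho^{-1}(0)$. The positive-dimensional toric strata there have Euler characteristic $0$ and contribute trivially regardless, but the torus fixed points have Euler characteristic $1$, and at each of them $g$ is again a product monomial with at least two positive exponents. So the reduction does not terminate as written. Your alternative $S^1$-action idea is actually closer to a direct argument: the obstruction you name (non-equivariance of $L$) can be bypassed by realizing the Milnor monodromy via a lift rotating a single coordinate $z_i$ with $m_i>0$ and then using multiplicativity of $\det(\id-t\,\Phi)$ along the resulting four-term exact sequence, but that has to be carried out explicitly and is not what the resolution argument supplies.
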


For our use in the proof of Theorem \ref{MT-3}, we recall the following 
proposition for the Euler characteristic of a hypersurface in a projective space.

\begin{proposition}{\rm (\cite[Section 5.3]{Dim92})}\label{prp:2-999}
Let $f(z)\in\CC[z_1,z_2,\dots,z_{N+1}]$ be 
a homogeneous polynomial of degree $d>0$ such that the hypersurface
$D \coloneq\{f=0\}\subset \PP^N$ is smooth.
Then we have
\begin{equation}
\chi(D)=\frac{1}{d}\{(1-d)^{N+1}-1\}+N+1.
\end{equation} 
\end{proposition}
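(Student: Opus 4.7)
The plan is to compute $\chi(D)$ indirectly through the open complement $\PP^N\setminus D$, using the Milnor fibration of the homogeneous polynomial $f$ and a natural $\mu_d$-Galois covering linking its Milnor fiber with $\PP^N\setminus D$.

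First, since $D$ is smooth in $\PP^N$, the affine cone $V:=\{f=0\}\subset\CC^{N+1}$ has an isolated singularity at the origin (and nowhere else), because any critical point of $f$ outside $0$ would descend via $\CC^{N+1}\setminus\{0\}\to\PP^N$ to a singular point of $D$. Hence $f\colon(\CC^{N+1},0)\to(\CC,0)$ is an isolated hypersurface singularity, and Milnor's theorem yields
\begin{equation*}
\chi(F)=1+(-1)^N\mu,
\end{equation*}
where $F$ is the local Milnor fiber and $\mu$ is its Milnor number. For a homogeneous polynomial of degree $d$ with isolated critical point at $0$, the standard formula (obtainable from the weighted homogeneous structure, or from the Fermat-type model $z_1^d+\cdots+z_{N+1}^d$ combined with $\mu$-constancy, or from a Bezout argument on the partial derivatives, each homogeneous of degree $d-1$) gives $\mu=(d-1)^{N+1}$.

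Next, since $f$ is homogeneous of positive degree, radial rescaling identifies (up to diffeomorphism) the local Milnor fiber $F$ with the global affine fiber $f^{-1}(1)\subset\CC^{N+1}$. Moreover, the projection
\begin{equation*}
\phi\colon f^{-1}(1)\longrightarrow\PP^N\setminus D,\qquad z\longmapsto[z]
\end{equation*}
is a finite \'etale Galois covering of degree $d$: its Galois group $\mu_d=\{\zeta\in\CC^*\mid\zeta^d=1\}$ acts freely on $f^{-1}(1)$ by scalar multiplication, and each fiber $\phi^{-1}([w])=\{\lambda w\mid\lambda^d f(w)=1\}$ consists of $d$ points. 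Multiplicativity of $\chi$ for unramified finite coverings therefore gives
\begin{equation*}
\chi(\PP^N\setminus D)=\frac{\chi(F)}{d}=\frac{1+(-1)^N(d-1)^{N+1}}{d}.
\end{equation*}

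It then remains to invoke the additivity of $\chi$ along $\PP^N=D\sqcup(\PP^N\setminus D)$, the value $\chi(\PP^N)=N+1$, and the elementary identity $-(-1)^N(d-1)^{N+1}=(1-d)^{N+1}$ to obtain the announced formula. The only genuinely nontrivial input is the value $\mu=(d-1)^{N+1}$ of the Milnor number; once this is granted, the remaining ingredients (Milnor's theorem, the $\CC^*$-scaling trivialization valid because $f$ is homogeneous, and Euler-characteristic multiplicativity under finite \'etale covers) are all entirely standard.
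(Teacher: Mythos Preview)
Your argument is correct. The paper does not actually supply its own proof of this proposition; it simply cites Dimca's book \cite[Section~5.3]{Dim92} and uses the formula as a black box later in the proof of Theorem~\ref{MT-3}. Your route via the Milnor fiber of the homogeneous singularity and the $\mu_d$-covering $f^{-1}(1)\to\PP^N\setminus D$ is in fact the classical one used in Dimca's book, so there is no meaningful divergence to discuss.
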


\section{Proof of Theorem \ref{MT-1}}\label{dim-1}
In this section, we prove Theorem \ref{MT-1} and 
give some examples. 
Set $D\coloneq\ssupp(\SM)\subset X$ and 
$U\coloneq X\bs D\subset X$.
Then there exists a distinguished triangle 
\begin{equation}
\rsect_D(\SM)\longrightarrow\SM
\longrightarrow\rsect_{X\bs D}(\SM)
\overset{+1}{\longrightarrow}.
\end{equation}
The inclusion map $U\hookrightarrow X$ being affine, we have 
\begin{equation}
H^j\rsect_{X\bs D}(\SM)\simeq0 
\quad (j\neq0)
\end{equation}
and $H^0\rsect_{X\bs D}(\SM)\simeq\Gamma_{X\bs D}(\SM)$ is 
the localization of $\SM$ along the divisor $D\subset X$.
We thus obtain an exact sequence
\begin{equation}
0\longrightarrow\Gamma_D(\SM)\longrightarrow
\SM\longrightarrow\Gamma_{X\bs D}(\SM)
\longrightarrow H_D^1(\SM)\longrightarrow0.
\end{equation} 
Since the Fourier transform $(\cdot )^\wedge$ is 
an exact functor and for any $j\in\{0,1\}$ 
the Fourier transform $(H_D^j(\SM))^\wedge$ of 
$H_D^j(\SM)\in\Modhol(\SD_X)$ has 
the trivial monodromy at infinity, 
we may assume that $\SM\simeq\Gamma_{X\bs D}(\SM)$ 
i.e. $\SM$ is an algebraic meromorphic connection.
Indeed, let $i\colon Y^\an\hookrightarrow\var{Y}^\an$ be 
the inclusion map and $h$ a holomorphic coordinate of 
$\var{Y}^\an\simeq\PP^1$ on a 
neighborhood of the point $\infty\in\var{Y}^\an$ 
such that $h^{-1}(0)=\{\infty\}$. 
Then for the study of the monodromy at 
infinity of $\SM^\wedge$ it suffices to calculate the monodromy zeta functions 
\begin{equation}
\zeta_{h,\infty}(i_! Sol_Y(\SM^\wedge )), \ 
\tl{\zeta}_{h,\infty}(i_! Sol_Y(\SM^\wedge ))
\quad \in \CF_{\CC(t)^\ast}(\{\infty\})\simeq\CC(t)^\ast
\end{equation}
and hence by Lemma \ref{lemma:2-8} and Remark \ref{dist-lem} 
we can show that for any $\lambda\in\CC^\ast$ such that 
$\lambda\neq1$ we have
\begin{equation}
\mu (\SM^\wedge ,\lambda)= \mu (\Gamma_{X\bs D}(\SM)^\wedge 
,\lambda).
\end{equation}
In what follows, we thus assume that $\SM$ is an 
algebraic meromorphic connection. 
Denote by $L$ the local system 
$Sol_X(\SM)\vbar_{U^\an}\simeq H^0Sol_X(\SM)\vbar_{U^\an}$ on 
$U^\an$. Recall that by the inclusion map 
$i_X\colon X\hookrightarrow\var{X}$ 
we set $\tl{\SM}\coloneq\bfD i_{X\ast}\SM\simeq 
i_{X\ast}\SM\in\Modhol(\SD_{\var{X}})$. 
Let $a_1,a_2,\dots,a_l\in D^{\an}$
be the points in $D^{\an}\subset X^{\an}$
and set $a_\infty\coloneq\infty\in\var{X}^{\an}$
so what we have 
\begin{equation}
\tl{D}\coloneq  \mathrm{sing.supp}(\tl{\SM})^{\an} = 
D^{\an}  \sqcup\{a_\infty\} 
=\{a_1,a_2,\dots,a_l,a_\infty\}.
\end{equation}
For a point $a_i\in \tl{D}$ let $B(a_i)^\circ 
\subset \var{X}^{\an}$ be 
a sufficiently small open punctured disk centered 
at it and 
\begin{align}
N_i =N(a_i) \colon \SP_{S_{a_i}\var{X}^{\an}}^\prime
\longrightarrow (\ZZ_{\geq 0})_{S_{a_i}\var{X}^{\an}}
\end{align}
the multiplicity of 
the analytic meromorphic connection
$(\tl{\SM})^{\an}$ at it. 
If an exponential factor $f \in N_i^{>0}$ at the point $a_i \in 
\tl{D}$ is holomorphic on an open subset $W \subset B(a_i)^\circ$, 
by abuse of notations we write $f \in N_i^{>0}(W)$. 
For such $f\in N_i^{>0}(W)$ and $w \in Y^{\an}= \CC$ 
we define a holomorphic function $f^w$ on $W$ by
\begin{align}
f^w(z) \coloneq zw-f(z) \quad \(z\in W \).
\end{align}
Recall that in \cite[Section 3]{KT23} we constructed 
an enhanced sheaf $G\in\BECrc(\CC_{\var{X}^\an})$ 
on $\var{X}^\an\simeq\PP^1$ 
explicitly such that 
\begin{equation}
\SolE_{\var{X}}(\tl{\SM})\simeq \CCE_{\var{X}^\an}\Potimes G.
\end{equation}
Note that by our construction of 
$G$ the restriction of 
$\rmR \pi_\ast G$ to $U^\an\subset \var{X}^\an$ is isomorphic to 
the local system $L$. 
Set $G^\circ\coloneq G\vbar_{X^\an\times\RR_s}$ and let 
\begin{equation}
X^\an\times\RR_s\overset{p_1}{\longleftarrow}
(X^\an\times\RR_s)\times(Y^\an\times\RR_t)
\overset{p_2}{\longrightarrow}Y^\an\times\RR_t
\end{equation}
be the projections.
Then for 
the (enhanced) Fourier-Sato transform 
${}^\sfL G\in\BECrc(\CC_{\var{Y}^\an})$ of $G$ 
(for the definition, see \cite[Section 4.1]{KT23}) 
on $\var{Y}^\an$ 
we have an isomorphism 
\begin{equation}
{}^\sfL G\vbar_{Y^\an\times\RR_t}\simeq
\bfQ(\rmR p_{2!}(p_1^{-1}G^\circ\otimes\CC_{\{t-s-\Re zw\geq0\}}[1])),
\end{equation}
where $\bfQ\colon\BDC(\CC_{Y^\an\times\RR_t})
\longrightarrow\BEC(\CC_{Y^\an})$ 
is the quotient functor. Recall that we 
have an isomorphism 
\begin{equation}
Sol_{Y}^\rmE( \SM^{\wedge}) \simeq \CCE_{Y^\an}\Potimes 
({}^\sfL G\vbar_{Y^\an\times\RR_t}) 
\end{equation}
and hence for our study of the Fourier transform $\SM^{\wedge}$ 
it suffices to study ${}^\sfL G\vbar_{Y^\an\times\RR_t}$. 
Moreover, for a point 
$(w,t)\in Y^{\an}\times\RR$
we have an isomorphism
\begin{align}
\(\rmR p_{2!}(p_1^{-1}G^{\circ} \otimes
\CC_{\{t-s-\Re zw\geq0\}}[1])\)_{(w,t)}
\simeq \rsect_c \(X^{\an};
\rmR\pi_!(G^{\circ} \otimes\CC_{\{t-s-\Re zw\geq0\}})[1]\).
\end{align}
Let us fix a point $(w,t)\in Y^{\an} \times\RR$ and explain the structure of
\begin{align}
G(w,t) \coloneq
\rmR\pi_!(G^{\circ} \otimes\CC_{\{t-s-\Re zw\geq0\}} [1])\
\in\BDC(\CC_{X^{\an}}).
\end{align}
By our construction of $G$,
the restriction of $G(w,t)$ to
$D^{\an}=\{a_1,\dots,a_l\}\subset X^{\an}$ is zero and  
on $U^{\an}=X^{\an} \setminus D^{\an}$ we have 
\begin{equation}\label{vanshi} 
H^j \rmR\pi_!(G^{\circ} \otimes\CC_{\{t-s-\Re zw\geq0\}}) \simeq 0 
\qquad (j \not= 0). 
\end{equation}
We thus obtain $H^j G(w,t) \simeq 0$ for $j \not= -1$ and 
on $U^{\an}$ there 
exists an isomorphism 
\begin{align}\label{tru-vic} 
\rmR\pi_!(G^{\circ} \otimes\CC_{\{t-s-\Re zw\geq0\}}) 
\simto 
\rmR\pi_*(G^{\circ} \otimes\CC_{\{t-s-\Re zw\geq0\}}). 
\end{align}
This implies that 
the canonical morphism $G^{\circ} \longrightarrow 
G^{\circ} \otimes\CC_{\{t-s-\Re zw\geq0\}}$ induces 
a morphism 
\begin{align}\label{surmorp} 
\Phi (w,t): 
L \simeq H^0( \rmR\pi_* G^{\circ})|_{U^{\an}}  \longrightarrow 
M(w,t):= 
H^{-1}G(w,t)|_{U^{\an}} 
\end{align}
of $\RR$-constructible sheaves on $U^{\an}$. In \cite[Section 4.2]{KT23}
we showed that it is surjective. Namely $M(w,t)$ 
is a quotient sheaf of the local system $L$. 
More precisely, for any simply connected 
open subset $W \subset B(a_i)^\circ$ 
there exists an isomorphism 
\begin{align}\label{locdesc} 
M(w,t)|_W \simeq 
\bigoplus_{f \in N_i^{>0}(W)} 
\Bigl(\CC_{\left\{z\in W  \mid 
\Re f^w(z) \leq t\right\}} \Bigr)^{N_i(f)}
\end{align}
(see \cite[Section 4.2]{KT23}). 
Then for the kernel $K(w,t):= {\rm Ker} \Phi (w,t) 
\subset L$ of $\Phi (w,t)$ and any simply connected 
open subset $W \subset B(a_i)^\circ$ 
we obtain an isomorphism 
\begin{align}\label{locdescofK} 
K(w,t)|_W \simeq 
\bigoplus_{f \in N_i^{>0}(W)} 
\Bigl(\CC_{\left\{z\in W  \mid 
\Re f^w(z) > t\right\}} \Bigr)^{N_i(f)}. 
\end{align}
In our proof of Theorem \ref{MT-1}, this local description of 
the sheaf $K(w,t) \subset L$ will play a central role. Let 
\begin{equation}
\tl{\pi}\colon(U^\an\times\RR_s)\times(Y^\an\times\RR_t)\longrightarrow
U^\an\times(Y^\an\times\RR_t)
\end{equation}
be the projection and 
\begin{equation}
j\colon(U^\an\times\RR_s)\times(Y^\an\times\RR_t)
\longhookrightarrow (X^\an\times\RR_s)\times(Y^\an\times\RR_t)
\end{equation}
the inclusion map.
Then by the construction of $G^\circ$ we have 
\begin{equation}
j_! j^{-1}(p_1^{-1}G^\circ)\simto p_1^{-1}G^\circ
\end{equation}
and hence 
\begin{equation}\label{eq:cutU}
j_! j^{-1}(p_1^{-1}G^\circ\otimes\CC_{\{t-s-\Re zw\geq0\}})
\simto p_1^{-1}G^\circ\otimes\CC_{\{t-s-\Re zw\geq0\}}.
\end{equation}
As in \eqref{vanshi} we have  
\begin{equation}
H^j\rmR \tl{\pi}_! j^{-1}(p_1^{-1}G^\circ
\otimes\CC_{\{t-s-\Re zw\geq0\}}) \simeq0. 
\quad (j\neq0)
\end{equation}
Moreover, as in \eqref{tru-vic} there exists an isomorphism
\begin{equation}
\rmR \tl{\pi}_! j^{-1}(p_1^{-1}G^\circ
\otimes\CC_{\{t-s-\Re zw\geq0\}})
\simto
\rmR \tl{\pi}_\ast j^{-1}(p_1^{-1}G^\circ
\otimes\CC_{\{t-s-\Re zw\geq0\}}).
\end{equation}
Since the restriction of 
$\rmR \pi_\ast G^\circ$ to $U^\an\subset X^\an$ is isomorphic to $L$, 
for the projection 
$ q_1\colon U^\an\times(Y^\an\times\RR_t)\longrightarrow U^\an$ 
we have an isomorphism
\begin{equation}
\rmR \tl{\pi}_\ast j^{-1}(p_1^{-1}G^\circ)\simeq
q_1^{-1} L.
\end{equation} 
This implies that the canonical morphism 
$p_1^{-1}G^\circ\longrightarrow p_1^{-1}G^\circ\otimes\CC_{\{t-s-\Re zw\geq0\}}$ 
induces a morphism
\begin{equation}\label{eq:ast!}
\Phi: q_1^{-1} L \simeq 
\rmR \tl{\pi}_\ast j^{-1}(p_1^{-1}G^\circ) \longrightarrow
M:= H^0 \rmR \tl{\pi}_! j^{-1}(p_1^{-1}G^\circ
\otimes\CC_{\{t-s-\Re zw\geq0\}})
\end{equation}
of $\RR$-constructible sheaves on $U^\an\times(Y^\an\times\RR_t)$. 
We see also that the morphism $\Phi$ is surjective and 
hence $M$ is a quotient sheaf of the local system $q_1^{-1} L$. 
Set $K\coloneq\Ker\Phi$ and $\tl{L}\coloneq q_1^{-1}L$ and 
consider the exact sequence
\begin{equation}
0\longrightarrow K \longrightarrow \tl{L}
\overset{\Phi}{\longrightarrow} M \longrightarrow 0.
\end{equation}
Then for any point $(w,t) \in Y^\an\times\RR$ the restriction 
of $M$ (resp. $K$) to the submanifold  
$U^{\an} \times \{ (w,t) \} \simeq U^{\an}$ of 
$U^\an\times(Y^\an\times\RR_t)$ is isomorphic to 
$M(w,t)$ (resp. $K(w,t)$). 
Let $q_2\colon U^\an\times(Y^\an\times\RR_t)
\longrightarrow(Y^\an\times\RR_t)$ be the projection.
Then by (\ref{eq:cutU}) we obtain an isomorphism
\begin{equation}
{}^\sfL G \vbar_{Y^\an\times\RR_t} =
\rmR p_{2!}(p_1^{-1}G^\circ\otimes\CC_{\{t-s-\Re zw\geq0\}}) [1] 
\simeq \rmR q_{2!} M [1].
\end{equation}
We thus obtain a distinguished triangle 
\begin{equation}
\rmR q_{2!} K\longrightarrow
\rmR q_{2!} \tl{L}\longrightarrow
{}^\sfL G \vbar_{Y^\an\times\RR_t} [-1] 
\overset{+1}{\longrightarrow}.
\end{equation}
Let $r \geq0$ be the generic rank 
$\rank\SM^\wedge$ of $\SM^\wedge$. 
For the point 
$b_\infty\coloneq\infty \in\var{Y}^\an\simeq\PP^1$ 
let $B(b_\infty)^\circ\subset\var{Y}^\an\bs\{\infty\}=Y^\an$ 
be a sufficiently small open punctured disk 
in $\var{Y}^\an\simeq\PP^1$ centered at it.
Then by \cite[Proposition 4.10]{KT23} there exists 
a continuous function $\phi\colon B(b_\infty)^\circ\longrightarrow\RR$ 
such that the restriction of ${}^\sfL G \vbar_{Y^\an\times\RR_t}
 \simeq\rmR q_{2!} M[1]$ 
to the subset 
\begin{equation}
E\coloneq\{(w,t)\mid w\in B(b_\infty)^\circ, t\geq\phi(w)\}
\quad \subset Y^\an\times\RR
\end{equation}
of $Y^\an\times\RR_t$ is concentrated in degree zero 
and a local system of rank $r$.
The restrictions of the cohomology sheaves of 
$\rmR q_{2!}\tl{L}$ to $E$ are also 
locally constant (actually they are constant).
Hence the same is true also for $\rmR q_{2!} K$.
Moreover by the proof of \cite[Corollary 4.5]{IT20a} 
and \cite[Theorem 1.1]{KT23}
on the punctured disk $B(b_\infty)^\circ\subset Y^\an$ 
there exists an isomorphism
\begin{equation}
\rmR \pi_\ast({}^\sfL G \vbar_{Y^\an\times\RR_t})_E \simeq
Sol_Y(\SM^\wedge).
\end{equation}
As before, let $i\colon Y^\an\longhookrightarrow\var{Y}^\an$ be 
the inclusion map and $h$ a holomorphic function on the open disk 
$B(b_\infty)\coloneq B(b_\infty)^\circ\sqcup\{\infty\}\subset\var{Y}^\an$ 
centered at the point $b_\infty=\infty\in\var{Y}^\an$ such that 
$(h)^{-1}(0)=\{b_\infty\}$.
Then applying the functor 
$\psi_{h,\infty}(i_!(\cdot))$ to the distinguished triangle 
\begin{equation}
\rmR\pi_\ast(\rmR q_{2!}K)_E \longrightarrow
\rmR\pi_\ast(\rmR q_{2!}\tl{L})_E \longrightarrow
\rmR\pi_\ast({}^\sfL G \vbar_{Y^\an\times\RR_t} [-1])_E 
\overset{+1}{\longrightarrow} 
\end{equation} 
by Lemma \ref{lemma:2-8} we obtain an equality
\begin{equation}
\zeta_{h,\infty}(i_! Sol_Y(\SM^\wedge))
=\zeta_{h,\infty}(i_!\rmR\pi_\ast(\rmR q_{2!}K)_E)\cdot 
\zeta_{h,\infty}(i_!\rmR\pi_\ast(\rmR q_{2!}\tl{L})_E)^{-1}.
\end{equation}
Since the cohomology sheaves of $\rmR\pi_\ast(\rmR q_{2!}\tl{L})_E$ 
are constant on the punctured disk $B(b_\infty)^\circ\subset Y^\an$ 
and have the trivial monodromy, for the study of the eigenvalues 
$\lambda\in\CC^\ast$ such that $\lambda\neq1$ 
in the monodromy at infinity of $\SM^\wedge$ 
it suffices to calculate the monodromy zeta function 
\begin{equation}
\zeta_{h,\infty}(i_!\rmR\pi_\ast(\rmR q_{2!}K)_E)(t)
\quad \in\CC(t)^\ast.
\end{equation} 
For this purpose, we fix $R\gg0$ such that 
\begin{equation}
 \gamma_R\coloneq \{w\in Y^\an=\CC\mid\abs{w}=R\}
\quad \subset B(b_\infty)^\circ
\end{equation}
and consider the monodromies of the local systems
\begin{equation}
H^j\rmR\pi_\ast(\rmR q_{2!} K)_E\vbar_{B(b_\infty)^\circ}
\quad (j\in\ZZ)
\end{equation}
along the loop $ \gamma_R\subset B(b_\infty)^\circ$ 
in the ``counterclockwise" direction.
For $t_0\gg0$ such that 
\begin{equation}
t_0>\max_{w\in  \gamma_R}\phi(w)
\end{equation} 
we define a subset $\tl{ \gamma_R}\simeq  \gamma_R$ of $E\subset Y^\an\times\RR_t$ by 
\begin{equation}
\tl{ \gamma_R}\coloneq\{(w,t_0)\mid w\in  \gamma_R\}
=\{(R e^{i\theta},t_0)\mid 0\leq\theta\leq2\pi\}
\end{equation}
and identify it with the loop $ \gamma_R$ naturally. 
Recall that the restrictions of the cohomology sheaves of 
$\rmR q_{2!} K$ to $E$ are locally constant. 
Then there exist isomorphisms
\begin{equation}
H^j\rmR\pi_\ast(\rmR q_{2!}K)_E\vbar_{ \gamma_R}\simeq
H^j(\rmR q_{2!}K)\vbar_{\tl{ \gamma_R}}
\quad (j\in\ZZ).
\end{equation}
For a point $(Re^{i\theta},t_0)\in\tl{ \gamma_R}$ 
($0\leq\theta\leq2\pi$) of the loop $\tl{ \gamma_R}$ let 
\begin{equation}
i_\theta\colon U^\an\simeq q_2^{-1}((Re^{i\theta},t_0))
\longhookrightarrow U^\an\times (Y^\an\times\RR_t)
\end{equation}
be the inclusion map and set 
$K_\theta\coloneq i_\theta^{-1}K 
\simeq K(Re^{i\theta},t_0) \in \BDC(\CC_{U^\an})$
so that we have an isomorphism
\begin{equation}
(\rmR q_{2!}K)_{(Re^{i\theta},t_0)}\simeq\rsect_c(U^\an; K_\theta).
\end{equation}
Then by our local description of the sheaves $K(w,t)$ in 
\eqref{locdescofK}, for sufficiently large $t_0 \gg 0$ 
the support of $K_\theta \simeq K(Re^{i\theta},t_0)$ 
is contained in the open subset 
\begin{equation}
B(a_1)^\circ\sqcup B(a_2)^\circ \sqcup\dots\sqcup B(a_l)^\circ
\sqcup B(a_\infty)^\circ \quad \subset U^\an.
\end{equation}
Moreover for $1\leq i\leq l$, if we 
choose sufficiently large $R \gg 0$ and $t_0 \gg 0$ 
so that we have $t_0 \gg R$, 
for any exponential factor $f \in N_{i}^{>0}$ 
of $\SM^{\an}$ at the point 
$a_{i} \in X^{\an}$ the level 
set $\{ z \in B(a_i)^\circ \ | \ 
\Re ( f^{R e^{i \theta}}(z) ) > t_0 \}$ of the 
the real part of the holomorphic 
function 
\begin{equation}
f^{R e^{i \theta}}(z)= z\cdot Re^{i\theta} -f(z)
\end{equation}
on the punctured disk 
$B(a_i)^\circ$ is perturbed only slightly 
as $\theta$ moves from $0$ to $2 \pi$. 
This implies that for any $1\leq i\leq l$ there exists 
$K_i\in \BDC(\CC_{B(a_i)^\circ})$ such that for any point 
$(Re^{i\theta},t_0)\in\tl{ \gamma_R}$ $(0\leq\theta\leq2\pi)$ 
we have an isomorphism 
\begin{equation}
\rsect_c(B(a_i)^\circ;K_i)\simto\rsect_c(B(a_i)^\circ;K_\theta)
\end{equation}
(see the construction of the enhanced sheaf $G$ on 
the punctured disk $B(a_i)^\circ$ in 
\cite[Section 2.5]{KT23}). 
It follows that for any $j \in \ZZ$ the automorphism of the 
cohomology group $H^j_c(B(a_i)^\circ;K_\theta)$ obtained as 
$\theta$ moves from $0$ to $2 \pi$ is equal to the identity. 
Hence for the study of 
the eigenvalues $\lambda\in \CC^\ast$
such that $\lambda\neq1$ we may replace $\rsect_c(U^\an;K_\theta)$
by $\rsect_c(B(a_\infty)^\circ;K_\theta)$ and 
have only to study how the 
cohomology groups $H^j_c(B(a_\infty)^\circ;K_\theta)$ 
change as $\theta$ moves from $0$ to $2 \pi$. 
By our assumption $\SM \in \Modleq(\SD_X)$, if we 
choose sufficiently large $R \gg 0$ and $t_0 \gg 0$ 
so that we have $t_0 \gg R$,  
for any exponential factor $f \in N_{\infty}^{>0}$ 
of $( \tl{\SM} )^{\an}$ at the point 
$a_{\infty}= \infty \in \var{X}^{\an}$ 
and any $0 \leq \theta \leq 2 \pi$ the level 
set $\{ z \in B(a_\infty)^\circ \ | \ 
\Re ( f^{R e^{i \theta}}(z) ) > t_0 \}$ of the 
the real part of the holomorphic 
function 
\begin{equation}
f^{R e^{i \theta}}(z)= z\cdot Re^{i\theta} -f(z)
\end{equation}
on the punctured disk $B(a_{\infty})^\circ$ 
is a slight perturbation of the contractible open subset 
$V_\theta\coloneq \{ z \in B(a_\infty)^\circ \ | \ 
\Re ( z\cdot Re^{i\theta} ) > t_0 \}$. 
After shrinking $V_\theta$ if necessary, we may 
assume also that for any $f \in N_{\infty}^{>0}$ we have 
the inclusion 
\begin{equation}
V_\theta \subset \{ z \in B(a_\infty)^\circ \ | \ 
\Re ( f^{R e^{i \theta}}(z) ) > t_0 \}. 
\end{equation}
This implies that there exists a morphism $L_{V_\theta} \longrightarrow 
K_\theta$ which induces an isomorphism
\begin{align}
\rsect_c(V_\theta;L) \simeq\rsect_c(B(a_\infty)^\circ;L_{V_\theta}) 
\simto \rsect_c(B(a_\infty)^\circ;K_\theta)
\end{align}
(see the construction of the enhanced sheaf $G$ on 
the punctured disk $B(a_{\infty})^\circ$ in 
\cite[Section 2.5]{KT23}). 
Moreover for any point $a\in V_\theta$ we have an isomorphism
\begin{equation}
\rsect_c(V_\theta;L)[2]\simeq L_a.
\end{equation}
Note that when we calculate the monodromy of 
the right hand side by a monodromy zeta function the shift 
$[2]$ in the left hand side can be neglected.
However note that if $\theta$ moves from $0$ to $2\pi$ 
the open subset $V_\theta\subset X^\an=\CC$ rotates in 
the ``clockwise" direction.
Then we immediately obtain the assertion. 
This completes the proof of Theorem \ref{MT-1}. 
\qed   \\

Finally, let us give some examples and compare Theorem \ref{MT-1} with results obtained by explicit calculations.
\begin{example}
\begin{enumerate}
\item [\rm (i)]
Let us consider the case where $\SM$ is Bessel's equation on $X=\CC_z$. 
Namely, for a complex number $\nu\in\CC$ we set
\begin{equation}
P\coloneq z^2\partial_z^2+z\partial_z+(z^2-\nu^2)
\end{equation}
and $\SM\coloneq \SD_X/\SD_XP\in\Modhol(\SD_X)$.
It is well-known that $z=\infty$ is an irregular singular point of $\tl{\SM}$ and the slope at infinity is 1.
Hence we have $\SM\in\Modleq(\SD_X)$.
Since $\SM$ is regular at the origin $z=0$ and $\ssupp(\tl{\SM})=\{0,\infty\}$, we obtain the monodromy matrix at infinity $A$ of $\SM$ by calculating the characteristic exponents of $P$ at the origin $z=0$.
Moreover it follows from an explicit calculation that the Fourier transform $P^\wedge$ of $P$ is regular at any point of $\var{Y}$ (cf. \cite[Corollary 4.14]{KT23}). 
Therefore we also obtain the monodromy matrix at infinity $A^\prime$ of $\SM^\wedge$ by the characteristic exponents of $P^\wedge$ at $w=\infty$.
Explicitly, we have
\begin{align}
A&=
\begin{pmatrix}
e^{2\pi\sqrt{-1}\nu} &  0 \\
0 & e^{-2\pi\sqrt{-1}\nu} \\
\end{pmatrix}
, \\
A^\prime&=
\begin{pmatrix}
e^{2\pi\sqrt{-1}\nu} &  0 \\
0 & e^{-2\pi\sqrt{-1}\nu} \\
\end{pmatrix},
\end{align}
as expected.

\item [\rm (ii)]
Let us consider the case where $\SM$ is 
the Gauss hypergeometric equation on $X=\CC_z$. 
Namely, for complex numbers $\alpha,\beta,\gamma\in\CC$ we set
\begin{equation}
P\coloneq z(1-z)
\partial_z^2+\{\gamma-(\alpha+\beta+1)z\}\partial_z-\alpha\beta
\end{equation}
and $\SM\coloneq\SD_X/\SD_XP\in\Modhol(\SD_X)$.
In this case, we have $\tl{\SM}\in\Modrh(\SD_{\var{X}})$ and $\ssupp(\tl{\SM})=\{0,1,\infty\}$.
Hence we can calculate the monodromy matrix at infinity $A$ of $\SM$ by the characteristic exponents of $P$ at $z=\infty$.
Explicitly, we have 
\begin{equation}
A=
\begin{pmatrix}
e^{-2\pi\sqrt{-1}\alpha} &  0 \\
0 & e^{-2\pi\sqrt{-1}\beta} \\
\end{pmatrix}
.
\end{equation}
On the other hand, the restriction of 
$\SM^\wedge$ to $Y \setminus \{ 0 \}$ 
is an integrable connection (see \cite[Theorem 1.2]{IT20b}). 
Moreover the meromorphic connection $(\SM^\wedge) (\ast \{ 0 \})$ 
is regular at the origin 
$0 \in Y$ and $\SM^\wedge\in\Modleq(\SD_Y)$ (see 
\cite[Theorems 4.4 and 4.6]{IT20a}). It follows that 
the monodromy at infinity of $\SM^\wedge$ is equal to 
the one around the origin $0 \in Y$. 
Similarly to the case (i), the monodromy matrix 
at infinity of $\SM^\wedge$ is 
\begin{equation}
A^\prime=
\begin{pmatrix}
e^{2\pi\sqrt{-1}\alpha} &  0 \\
0 & e^{2\pi\sqrt{-1}\beta} \\
\end{pmatrix}.
\end{equation}   

\item [\rm (iii)]
Let us consider the case where $\SM$ is 
Katz's hypergeometric equation on $X=\CC_z$. 
Namely, for complex parameters $\alpha =( \alpha_1, \ldots, \alpha_n) 
\in \CC^n$, $\beta =( \beta_1, \ldots, \beta_m) 
\in \CC^m$ and $\gamma \in \CC^*$ we set
\begin{equation}
P \coloneq \gamma \cdot \prod_{i=1}^n (z \partial_z- \alpha_i) - 
z \cdot \prod_{j=1}^m (z \partial_z- \beta_j)
\end{equation}
and $\SM\coloneq\SD_X/\SD_X P \in\Modhol (\SD_X)$. 
This $\SD$-module has been widely studied from 
various points of view by several authors (see \cite{BHHS23}, \cite{CDS21}, \cite{DM89}, \cite{Fed18}, \cite{Hie22} and \cite{Kat90}). 
Assume that $d \coloneq n-m >0$. Then $z= \infty$ is an 
irregular singular point of $\tl{\SM}$ and the slopes 
at infinity are $0$ and $1/d \leq 1$. Hence we have 
$\SM \in\Modleq(\SD_X)$. Moreover $\SM$ is regular at 
the origin $z=0$ and ${\rm sing.supp} ( \tl{\SM} )= \{ 0, \infty \}$. 
We thus can calculate the monodromy at infinity of $\SM$ 
by the characteristic exponents $\alpha_1, \ldots, \alpha_n$ 
of $P$ at the origin $z=0$. Explicitly, its eigenvalues are 
$\exp (2 \pi i \alpha_i)$ ($1 \leq i \leq n$). 
On the other hand, for the Fourier transform $P^{\wedge}$ of 
$P$ we have 
\begin{equation}
P^{\wedge} = \gamma \cdot \prod_{i=1}^n (-w \partial_w- 1- \alpha_i)  
+ \partial_w \cdot \prod_{j=1}^m (-w \partial_w -1- \beta_j)
\end{equation}
and hence $P^{\wedge}$ is regular at the point $w= \infty$ (see also \cite[Theorems 1.1 and 4.16]{KT23}). 
Then by calculating the characteristic exponents 
of $P^{\wedge}$ at $w= \infty$, we can easily see 
that the eigenvalues of the  monodromy at infinity of $\SM^{\wedge}$ 
are $\exp (-2 \pi i \alpha_i)$ ($1 \leq i \leq n$), as expected. 
\end{enumerate}
\end{example}

\section{Monodromies at infinity of 
Fourier transforms of 
regular holonomic D-modules}\label{dim-N}

For an algebraic regular holonomic $\SD_X$-modules $\SM$ on $X=\CC_z^N$, 
let us consider the monodromies at infinity of $Sol_Y(\SM^\wedge)$. 
Let $\Omega\subset Y=\CC_w^N$ 
be the maximal open subset of $Y$ such that the restriction 
\begin{equation}
\Char(\SM)\cap q^{-1}(\Omega)\longrightarrow \Omega
\end{equation} 
of the projection $q\colon T^\ast X\simeq X\times Y\longrightarrow Y$ 
is an unramified (finite) covering. 
Then by the conicness of $\Char(\SM)\subset T^\ast X$ 
we can easily see that $\Omega$ is 
also a conic open subset of $Y=\CC_w^N$.
By \cite[Corollary 4.5]{IT20a} the Fourier transform $\SM^\wedge$ of $\SM$ is 
an integrable connection on $\Omega$. 
This implies that the restriction of 
its solution complex $Sol_Y(\SM^\wedge)\in\Dbc(\CC_{Y^\an})$ to 
$\Omega^\an\subset Y^\an$ is a local system.
Take a point $w_0 \in\Omega^\an$ such that 
$w_0\neq0$ and set 
\begin{equation}
\LL\coloneq\{\tau w_0 \mid \tau\in\CC\} 
\quad \subset Y^\an\simeq\CC^N.
\end{equation}
Assume that $\Omega \not= Y$. 
Then we have $\LL\simeq\CC$ and 
$\LL\cap\Omega=\LL\bs\{0\}\simeq\CC\bs\{0\}$.
Hence for $R>0$ we can consider the monodromy of 
the local system $Sol_Y(\SM^\wedge)\vbar_{\Omega^\an}$ along the loop 
\begin{equation}
C_R\coloneq\{Re^{i\theta}w_0\mid 0\leq\theta\leq2\pi\} 
\quad \subset\LL\bs\{0\}
\end{equation}
in the complex line $\LL\subset Y^\an$ as follows. 
Take a point $b \in C_R$ of $C_R$ and let $\Psi_b$ be 
the $\CC$-linear automorphism of the stalk 
\begin{equation}
Sol_Y(\SM^{\wedge})_b \simeq
\shom_{\SD_{Y^\an}}(( \SM^{\wedge})^\an,\SO_{Y^\an})_b
\end{equation}
at $b$ induced by the analytic continuations along the loop $C_R$ 
in the ``counterclockwise" direction. 
We denote by $\rank\SM^{\wedge}$ the generic rank of 
the Fourier transform $\SM^{\wedge}$ of 
the holonomic $\SD_X$-module $\SM$ so that we have 
$Sol_Y(\SM^{\wedge} )_b \simeq\CC^{\rank{\SM^{\wedge}}}$. 
Then the square matrix $A$ of size $\rank\SM^{\wedge}$ representing 
the automorphism $\Psi_b$ is defined up to conjugacy. 
We call it the monodromy (matrix) at infinity of $\SM^{\wedge}$.
It is clear that the conjugacy class of $A$ in 
$\GL_{\rank\SM^{\wedge}}(\CC)$ does not depend on 
the choices of $R >0$ and $b \in C_R$. Moreover 
it does not depend on the choice of the point 
$w_0 \in\Omega^\an$ such that 
$w_0\neq0$. For a non-zero complex number $\lambda\in\CC^\ast=\CC\bs\{0\}$ 
we denote by $\mu (\SM^{\wedge},\lambda)$ the multiplicity of the eigenvalue 
$\lambda$ in the monodromy at infinity of $\SM^{\wedge}$. 
Let $i\colon\CC\longhookrightarrow \PP^1$ be 
the inclusion map and $h$ a holomorphic coordinate of $\PP^1$ defined on 
a neighborhood of the point $\infty\in\PP^1$ in 
$\PP^1$ such that $h^{-1}(0)=\{\infty\}$. We define a 
morphism $g\colon X^\an=\CC^N\longrightarrow\CC$ by 
\begin{equation}
g\colon X^\an=\CC^N\longrightarrow\CC
\quad (z\longmapsto\innpro{z,w_0})
\end{equation}
and set $\SF \coloneq Sol_X(\SM)\in \Dbc(\CC_{X^\an})$. 

\begin{theorem}\label{MT-2} 
For any non-zero complex number 
$\lambda\in\CC^\ast=\CC\bs\{0\}$ such that $\lambda\neq1$ 
the multiplicity $\mu (\SM^{\wedge},\lambda)$ of it 
in the the monodromy at infinity of $\SM^{\wedge}$ 
is equal to that of 
the factor $t- \lambda$ (resp. $1-\lambda t$) in the 
rarional function 
$\Bigl\{ \tl{\zeta}_{h,\infty}(i_! \rmR g_!\SF)(t) \Bigr\}^{(-1)^{N+1}} \in\CC(t)^\ast$ 
(resp. $\Bigl\{ \zeta_{h,\infty}(i_! \rmR g_!\SF)(t) \Bigr\}^{(-1)^{N+1}} 
\in\CC(t)^\ast$). 
\end{theorem}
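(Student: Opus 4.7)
The plan is to reduce the $N$-dimensional problem to a one-dimensional computation of monodromy zeta functions via the linear function $g \colon X^\an \to \CC$, and then apply the strategy of the proof of Theorem \ref{MT-1} together with the zeta-function machinery of Section \ref{sec:zeta}.

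\textbf{Step 1 (enhanced solution complex of $\SM^\wedge$).} Since $\SM$ is regular holonomic, Proposition \ref{prop-K2}(v) gives $Sol_X^\rmE(\SM) \simeq \CCE_{X^\an} \Potimes \epsilon(\SF)$. Applying the enhanced Fourier--Sato transform of \cite{KS16a} and arguing as in \cite[Corollary 4.5]{IT20a} and \cite[Theorem 1.1]{KT23}, for each $w \in \Omega^\an$ and each $t$ sufficiently large (depending on $w$) one obtains an isomorphism
\begin{equation*}
\rsect_c\bigl(X^\an; \SF \otimes \CC_{\{z \in X^\an \mid t - \Re\langle z,w\rangle \geq 0\}}\bigr)[N] \simto Sol_Y(\SM^\wedge)_w.
\end{equation*}

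\textbf{Step 2 (reduction to a line via $g$).} For $w = \tau w_0 \in \LL$ we have $\Re\langle z,w\rangle = \Re(\tau g(z))$, so the cutting sheaf above is the pull-back $g^{-1}\CC_{\{t - \Re(\tau \zeta) \geq 0\}}$. The projection formula applied to $g$ yields
\begin{equation*}
\rsect_c\bigl(X^\an; \SF \otimes g^{-1}\CC_{\{t-\Re(\tau\zeta)\geq 0\}}\bigr) \simeq \rsect_c\bigl(\CC_\zeta; \rmR g_!\SF \otimes \CC_{\{t-\Re(\tau\zeta)\geq 0\}}\bigr).
\end{equation*}
Hence the restriction of $Sol_Y^\rmE(\SM^\wedge)$ to $\LL \times \RR_t \simeq \CC_\tau \times \RR_t$ coincides, up to the shift $[N]$, with the one-dimensional enhanced Fourier--Sato transform of $\epsilon(\rmR g_!\SF)$.

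\textbf{Step 3 (one-dimensional monodromy analysis).} Now I run the argument of Section \ref{dim-1} with the constructible sheaf $\rmR g_!\SF \in \Dbc(\CC_{\CC_\zeta})$ playing the role of the enhanced sheaf $G$ attached to $\tl{\SM}$. Because $\rmR g_!\SF$ carries no exponential enhancement, the local description \eqref{locdesc}--\eqref{locdescofK} of the sheaves $M(w,t)$ and $K(w,t)$ degenerates to a plain tensor product of $\rmR g_!\SF$ with $\CC_{\{t-\Re(\tau\zeta)\geq 0\}}$, and one obtains a distinguished triangle
\begin{equation*}
\rmR q_{2!}K \longrightarrow \rmR q_{2!}\tl{\SL} \longrightarrow \bigl(Sol_Y^\rmE(\SM^\wedge)\vbar_{\LL \times \RR_t}\bigr)[-1-N] \overset{+1}{\longrightarrow}
\end{equation*}
in which $\tl{\SL}$ has locally constant cohomology, hence contributes only to the eigenvalue $1$ at infinity. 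Thus, for $\lambda \neq 1$, the multiplicity $\mu(\SM^\wedge,\lambda)$ is determined by $\rmR q_{2!}K$.

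\textbf{Step 4 (identification with the monodromy zeta function).} Fix $R \gg 0$ and $t_0 \gg R$. As in the proof of Theorem \ref{MT-1}, the stalk $K_\theta$ over a point of the lifted loop $\tl{\gamma_R}$ is supported near $\infty \in \PP^1$ on the $\zeta$-line, and the domain $V_\theta = \{\zeta \in B(\infty)^\circ \mid \Re(Re^{i\theta}\zeta) > t_0\}$ is a contractible half-plane sector rotating in the clockwise direction as $\theta$ runs from $0$ to $2\pi$. The canonical isomorphism
\begin{equation*}
\rsect_c\bigl(V_\theta; \rmR g_!\SF\bigr)[2] \simeq (\rmR g_!\SF)_{a_\theta}
\end{equation*}
at a reference point $a_\theta \in V_\theta$ converts the counter-clockwise monodromy along $C_R$ into the nearby-cycle monodromy at $\infty \in \PP^1$ of $i_!\rmR g_!\SF$. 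Applying $\psi_{h,\infty}(i_!(\cdot))$ to the distinguished triangle of Step 3, using Theorem \ref{thm.7.2.006} and Lemma \ref{lemma:2-8} with the inversion formula \eqref{invrel}, and accounting for the shift $[-1-N]$ which produces the exponent $(-1)^{N+1}$ on the zeta function, one obtains the claimed identity between $\mu(\SM^\wedge,\lambda)$ and the multiplicity of $t-\lambda$ (resp.\ $1-\lambda t$) in $\tl{\zeta}_{h,\infty}(i_!\rmR g_!\SF)(t)^{(-1)^{N+1}}$ (resp.\ $\zeta_{h,\infty}(i_!\rmR g_!\SF)(t)^{(-1)^{N+1}}$).

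\textbf{Main obstacle.} The delicate step is the one-dimensional reduction in Step 2: one must confirm that the restriction of the enhanced Fourier--Sato transform of $Sol_X^\rmE(\SM)$ to a line $\LL \subset Y^\an$ really coincides (up to the predicted shift) with the one-dimensional enhanced Fourier--Sato transform of $\rmR g_!\SF$, and that the local structure of $K(w,t)$ for $w \in \LL$ of large modulus is governed by the behaviour of $\rmR g_!\SF$ near $\infty \in \PP^1$ rather than by finer invariants of $\SF$ itself. Here the hypothesis that $\SM$ is regular holonomic is essential, as it is precisely the absence of exponential factors in $Sol_X^\rmE(\SM)$ which allows the whole enhanced picture on $\LL \times \RR_t$ to be controlled by the plain constructible sheaf $\rmR g_!\SF$.
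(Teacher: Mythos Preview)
Your proposal is correct and follows essentially the same route as the paper: both arguments compute the stalks of the enhanced Fourier--Sato transform along the loop $\tl{C_R}$, pass from the closed half-space $Z_\theta$ to the open one $U_\theta$ via a distinguished triangle (discarding the $\theta$-independent term), and then reduce to the one-dimensional sheaf $\rmR g_!\SF$ by the projection formula, finishing with the clockwise rotation of $V_\theta$ and the shift accounting.

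The only difference is one of packaging. You push forward along $g$ first (Step~2) and then try to feed $\epsilon(\rmR g_!\SF)$ into the full Section~\ref{dim-1} machinery of $K(w,t)$, $M(w,t)$ and \cite[Proposition~4.10]{KT23}. The paper instead keeps the computation on $X^\an$: it uses directly the triangle $\SF_{U_\theta}\to\SF\to\SF_{Z_\theta}$, applies the projection formula to $\rsect_c(X^\an;\SF_{U_\theta})$, and invokes only the algebraic constructibility of $\rmR g_!\SF$ to conclude that for $t_0\gg 0$ one has $\rsect_c(V_\theta;\rmR g_!\SF)[2]\simeq(\rmR g_!\SF)_b$. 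This avoids any appeal to the exponential-factor bookkeeping of Section~\ref{dim-1}, which is designed for genuinely irregular $\SD$-modules and is not literally stated for an arbitrary constructible complex such as $\rmR g_!\SF$. Your observation that, in the regular case, the descriptions \eqref{locdesc}--\eqref{locdescofK} degenerate (all exponential factors vanish) is exactly what makes your detour harmless; but the paper's direct argument shows that the detour is unnecessary.
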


\begin{proof}
Let $\var{X}\simeq\PP^N$ be 
the projective compactification of $X=\CC^N$ and 
$i_X\colon X\hookrightarrow\var{X}$ the inclusion map. 
We set $\tl{\SM}\coloneq\bfD i_{X\ast}\SM\simeq i_{X\ast}\SM\in
\Modhol(\SD_{\var{X}})$.
By the inclusion map $i_{X^{\an}}\colon X^{\an} \hookrightarrow \var{X}^{\an}$ 
we define an $\RR$-constructible enhanced sheaf 
$G \in \BECrc(\CC_{\var{X}^\an})$ on $\var{X}^\an \simeq \PP^N$ by 
\begin{equation}
G \coloneq \epsilon \Bigl( (i_{X^{\an}})_! \SF \Bigr) 
\simeq\CC_{\{t\geq0\}}\otimes \pi^{-1} (i_{X^{\an}})_! \SF \quad 
\in\BECrc(\CC_{\var{X}^\an}). 
\end{equation} 
Then by Proposition \ref{prop-K2} (v) we obtain an isomorphism 
\begin{equation}
\SolE_{\var{X}}(\tl{\SM})\simeq \CCE_{\var{X}^\an}\Potimes G.
\end{equation}
Moreover by the proof of \cite[Theorem 4.4]{IT20a}, 
for the (enhanced) Fourier-Sato transform 
${}^\sfL G \in\BECrc(\CC_{\var{Y}^\an})$ of $G$ and 
a point $(w,t)\in C_R \times\RR 
\subset ( \LL\bs\{0\} ) \times \RR$ there exists an isomorphism
\begin{equation}
{}^\sfL G_{(w,t)}\simeq
\rsect_c(\{z\in X^\an\mid \Re\innpro{z,w}\leq t\}; \SF)[N].
\end{equation} 
For a sufficiently large $t_0 \gg 0$ we set also
\begin{equation}
\tl{C_R}\coloneq \{ (w,t_0) \mid w \in C_R \} = 
\{(Re^{i\theta}w_0,t_0)\mid 0\leq\theta\leq2\pi\} 
\quad \subset ( \LL\bs\{0\} ) \times \RR 
\subset Y^\an\times\RR
\end{equation}
and identify it with the loop $C_R\subset Y^\an$ naturally.
Then by the proof of \cite[Corollary 4.5]{IT20a} we obtain an isomorphism 
\begin{equation}
Sol_Y(\SM^\wedge)\vbar_{C_R}\simeq {}^\sfL G\vbar_{\tl{C_R}}.
\end{equation}
Hence, for the study of the monodromy of $Sol_Y(\SM^\wedge)$ along 
the loop $C_R$, it suffices to study that of ${}^\sfL G$ along $\tl{C_R}$.
For $0\leq\theta\leq2\pi$ let us set
\begin{equation}
Z_\theta\coloneq\{z\in X^\an\mid \Re\innpro{z,Re^{i\theta}w_0}\leq t_0\}
\quad \subset X^\an.
\end{equation}
Then for any point $(Re^{i\theta}w_0,t_0)\in\tl{C_R}$ of 
the loop $\tl{C_R}$ we obtain isomorphisms 
\begin{align}
{}^\sfL G_{(Re^{i\theta}w_0, t_0)} \simeq\rsect_c(Z_\theta;\SF)[N] 
\simeq\rsect_c(X^\an;\SF_{Z_\theta})[N].
\end{align}
For $0\leq\theta\leq2\pi$ we set also 
\begin{equation}
U_\theta\coloneq X^\an\bs Z_\theta
=\{z\in X^\an\mid \Re\innpro{z,Re^{i\theta}w_0}>t_0\}
\end{equation}
so that we have distinguished triangles 
\begin{equation}
\SF_{U_\theta}\longrightarrow\SF
\longrightarrow\SF_{Z_\theta}\overset{+1}{\longrightarrow}
\end{equation}
and 
\begin{equation}
\SF [N] \longrightarrow\SF_{Z_\theta} [N]
\longrightarrow \SF_{U_\theta} [N+1] 
\overset{+1}{\longrightarrow}. 
\end{equation}
Hence as in the proof of Theorem \ref{MT-1}, 
by the theory of monodromy zeta functions we can show that 
it suffices to study the automorphism of 
$\rsect_c(X^\an;\SF_{U_\theta} [N+1])$ obtained as 
$\theta$ moves from $0$ to $2\pi$.
For this purpose, for $0\leq\theta\leq2\pi$ we define 
an open subset $V_\theta\subset\CC$ of $\CC$ by
\begin{equation}
V_\theta\coloneq\{\tau\in\CC\mid\Re(Re^{i\theta} \tau )>t_0\}
\quad \subset\CC
\end{equation}
and consider the algebraic map 
\begin{equation}
g\colon X^\an=\CC^N\longrightarrow\CC
\quad (z\longmapsto\innpro{z,w_0}).
\end{equation}
Then for any $0\leq\theta\leq2\pi$ we have 
\begin{equation}
U_\theta=\{z\in X^\an\mid g(z)\in V_\theta\}
=g^{-1}(V_\theta)
\end{equation}
and hence there exist isomorphisms
\begin{align}
\rsect_c(X^\an;\SF_{U_\theta})
&\simeq\rsect_c(\CC;\rmR g_!(\SF_{U_\theta})) \notag \\ 
&\simeq\rsect_c(\CC;\rmR g_!(\CC_{U_\theta}\otimes\SF)) \notag \\
&\simeq\rsect_c(\CC;\rmR g_!(g^{-1}\CC_{V_\theta}\otimes\SF)) \notag \\
&\simeq\rsect_c(\CC;\CC_{V_\theta}\otimes\rmR g_!\SF) \notag \\
&\simeq\rsect_c({V_\theta};\rmR g_!\SF).
\end{align}
Now note that by the algebraicity of $\SM$ 
the solution complex $\SF=Sol_X(\SM)\in\Dbc(\CC_{X^\an})$ is 
algebraically constructible.
Since the morphism $g\colon X^\an=\CC^N\longrightarrow\CC$ is algebraic, 
this implies that $\rmR g_!\SF\in\Dbc(\CC_{\CC})$ is 
also algebraically constructible.
We thus can take the sufficiently large number $t_0>0$ 
so that for any $0\leq\theta\leq2\pi$ and $j\in\ZZ$ 
the restriction of 
$H^j\rmR g_!\SF$ to the open subset $V_\theta\subset\CC$ 
is a local system.
So, after replacing $t_0>0$ if necessary, 
we may assume that for any $0\leq\theta\leq2\pi$ and 
any point $b\in V_\theta$ in $V_\theta$ we have an isomorphism
\begin{equation}
\rsect_c(V_\theta;\rmR g_!\SF)[2]\simeq(\rmR g_!\SF)_b.
\end{equation} 
Note that when we calculate the 
monodromy of the right hand side by a monodromy 
zeta function 
the shift $[2]$ on the left hand side 
can be neglected. 
Hence it suffices to study the monodromy at infinity of 
$\rmR g_!\SF\in\Dbc(\CC_\CC)$.
However note that if $\theta$ moves from $0$ to $2\pi$ 
the open subset $V_\theta\subset\CC$ rotates 
in the ``clockwise" direction. This rotation is in 
the ``counterclockwise" direction when we view it 
from the point $\infty \in \PP^1$. Then the assertion 
immediately follows. 
\end{proof} 

From now, we shall explain how the monodromy zeta function 
$\zeta_{h,\infty}(i_! \rmR g_!\SF)(t) \in\CC(t)^\ast$ in 
Theorem \ref{MT-2} can be calculated. 
Let $H\coloneq\var{X}\bs X\simeq\PP^{N-1}$ be 
the hyperplane at infinity of $\var{X} \simeq \PP^N$ and 
$\tl{g}$ the meromorphic function on $\var{X}^\an \simeq \PP^N$ 
obtained by extending $g:X^{\an} \longrightarrow \CC$ to $\var{X}^\an$. 
Then it has a pole along $H^\an\subset\var{X}^\an$.
Moreover, the closure $\var{g^{-1}(0)}\subset\var{X}^\an$ of 
$g^{-1}(0)\subset X^\an$ in $\var{X}^\an$ 
intersects $H^\an$ transversally and 
the set of the points of 
indeterminacy of the meromorphic function $\tl{g}$ is 
equal to the codimension two 
complex submanifold 
$\var{g^{-1}(0)}\cap H^\an \subset \var{X}^\an$ of $\var{X}^\an$.
Let $\nu\colon(\var{X}^\an)^\sim\longtwoheadrightarrow\var{X}^\an$ 
be the blow-up of $\var{X}^\an$ along 
$\var{g^{-1}(0)}\cap H^\an$ and 
$\iota\colon X^\an\longhookrightarrow(\var{X}^\an)^\sim$ 
the inclusion map. Then the meromorphic function $\tl{g} \circ \nu$ 
has no point of indeterminacy on the whole 
$(\var{X}^\an)^\sim$ and hence 
there exists a proper holomorphic map 
$g^\prime\colon(\var{X}^\an)^\sim\longrightarrow\PP^1$ such that 
for the inclusion map $i\colon\CC\longhookrightarrow \PP^1$ 
we have a commutative diagram
\begin{equation}
\vcenter{\xymatrix@M=7pt{
X^\an \ar@{^{(}->}[r]^-{\iota}
\ar@{->}[d]_-{g}  &
(\var{X}^\an)^\sim \ar@{->}[d]^-{g^\prime} \\
\CC \ar@{^{(}->}[r]_-{i} &
\PP^1.       
}}
\end{equation}
We thus obtain an isomorphism
\begin{equation}\label{eq:iRg}
i_! \rmR g_!\SF\simeq\rmR g^\prime_\ast\iota_!\SF.
\end{equation}
Then it follows from the isomorphism (\ref{eq:iRg}) and 
Theorem \ref{prp:2-99} that there exists an equality
\begin{equation}
\zeta_{h,\infty}(i_! \rmR g_!\SF)
= \int_{(h \circ g^\prime)^{-1}(0)}\zeta_{h\circ g^\prime}(\iota_!\SF)
=\int_{(g^\prime)^{-1}(\infty)}\zeta_{h\circ g^\prime}(\iota_!\SF)
\end{equation}
in the multiplicative group $\CC(t)^\ast$, where
\begin{equation}
\int_{(g^\prime)^{-1}(\infty)}
\colon\CF_{\CC(t)^\ast}((g^\prime)^{-1}(\infty))
\longrightarrow \CC(t)^\ast
\end{equation}
is the topological integral of 
$\CC(t)^\ast$-valued constructible functions over 
the analytic subset $(g^\prime)^{-1}(\infty)\subset(\var{X}^\an)^\sim$.
Note that $(g^\prime)^{-1}(\infty)$ is nothing but the 
proper transform of $H^{\an}$ in the blow-up $(\var{X}^\an)^\sim$. 
In this situation, we can calculate 
$\dint_{(g^\prime)^{-1}(\infty)}\zeta_{h\circ g^\prime}(\iota_!\SF)
\in\CC(t)^\ast$ by the methods in 
Section \ref{sec:zeta} as follows.  
Set $Z\coloneq(\var{X}^\an)^\sim$ and $\tl{\SF}\coloneq\iota_!\SF$.
Let $\calS$ be a stratification of 
$Z$ adapted to the constructible sheaf $\tl{\SF}$ and 
for $k \geq 0$ set 
\begin{equation}
Z_k\coloneq\bigsqcup_{S\in\calS\colon \dim S \leq k} S 
\quad \subset Z.
\end{equation}
Then $Z_k$ are (closed) analytic subsets of 
$Z$ and for any $k\geq 1$ we have a distinguished triangle
\begin{equation}
\tl{\SF}_{Z_k\bs Z_{k-1}}\longrightarrow
\tl{\SF}_{Z_k}\longrightarrow
\tl{\SF}_{Z_{k-1}}\overset{+1}{\longrightarrow}.
\end{equation}
Decomposing the support of $\tl{\SF}$ in this way and 
using also truncations and Lemma \ref{lemma:2-8}, 
we may assume that $\tl{\SF}=i_{S!}L$ for 
a local system $L$ on a stratum $S\in\calS$, 
where $i_S\colon S\longhookrightarrow Z$ is the inclusion map.
Since $\tl{\SF}\vbar_{Z\bs\iota(X^\an)}\simeq0$, 
the condition $L\not\simeq0$ implies that $S$ is 
contained in the open subset $\iota(X^\an)\subset Z$.
Hence we may assume also that $S\subset\iota(X^\an)$.
This in particular 
implies that we have $S\cap(g^\prime)^{-1}(\infty)=\emptyset$.
Let $\var{S}\subset Z$ be the closure of $S$ in $Z$.
Then there exists a proper morphism $\rho\colon T\longrightarrow Z$ of 
a smooth complex manifold $T$ such that 
$\rho(T)=\var{S}$ inducing an isomorphism 
$\rho^{-1}(S)\simto S$ and 
$D\coloneq\rho^{-1}(\var{S}\bs S)\subset T$ is 
a normal crossing divisor in $T$.  
Let $j_S\colon S\simeq \rho^{-1}(S)\longhookrightarrow T$ 
be the inclusion map.
Then there exist isomorphisms 
\begin{equation}
\tl{\SF}\simeq i_{S!}L
\simeq \rmR \rho_! j_{S!}L 
\simeq \rmR \rho_\ast j_{S!}L.
\end{equation}
By Theorem \ref{prp:2-99} we thus obtain
\begin{align}
\int_{(g^\prime)^{-1}(\infty)}
\zeta_{h\circ g^\prime}(\tl{\SF})
&=\int_{(h\circ g^\prime)^{-1}(0)}
\zeta_{h\circ g^\prime}(i_{S!}L) \notag \\
&=\int_{(h\circ g^\prime\circ\rho)^{-1}(0)}
\zeta_{h\circ g^\prime\circ\rho}(j_{S!}L).
\end{align}
Moreover, by the condition 
$S\cap(g^\prime)^{-1}(\infty)=S\cap(h\circ g^\prime)^{-1}(0)=\emptyset$ 
for the holomorphic function 
$f_S\coloneq h\circ g^\prime\circ\rho\colon T
\longrightarrow\CC$ on $T$ 
we have $j_S(S)\cap f_S^{-1}(0)=\emptyset$.
This implies that $f_S^{-1}(0)\subset T$ is 
contained in the normal crossing divisor $D\subset T$.
Namely $f_S^{-1}(0)$ is a union of some irreducible components of $D$.
In this situation, we can apply 
Lemmas \ref{lem:2-99} and \ref{lem:2-999} 
to calculate the $\CC (t)^*$-valued constructible function
\begin{equation}
\zeta_{h\circ g^\prime\circ\rho}(j_{S!}L)
=\zeta_{f_S}(j_{S!}L)
\quad \in \CF_{\CC(t)^\ast}(f_S^{-1}(0))
\end{equation}
at each point of $f_S^{-1}(0)\subset T$.
Then by Corollary \ref{cpr:2-99} we can calculate 
the monodromy zeta function 
$\zeta_{h,\infty}(i_! \rmR g_!\SF)(t)\in \CC(t)^\ast$ 
in the general case. 

As a special case, we have the following result 
for the Fourier transforms of some regular meromorphic connections.

\begin{theorem}\label{MT-3}
Let $f(z)\in\CC[z_1,z_2,\ldots,z_N]$ be a polynomial of 
degree $d>0$ such that the hypersurface 
$\{f_d=0\}\subset H^\an\simeq\PP^{N-1}$ defined by 
its top degree part $f_d(z)$ is smooth and 
$j\colon X^\an\bs f^{-1}(0)\longhookrightarrow X^\an$ the inclusion map.
For a complex number $\alpha\in\CC$ and the rank one local system 
$L\coloneq\CC_{X^\an\bs f^{-1}(0)}f^\alpha(z)$ 
on $X^\an\bs f^{-1}(0)$ associated to it, 
let $\SM\in\Modrh(\SD_X)$ be the 
algebraic regular meromorphic connection on 
$X$ such that $Sol_X(\SM)\simeq j_!L$ 
(see \cite[Theorem 5.2.24 and Corollary 5.3.10]{HTT08}).
Then for $\SF\coloneq j_!L\in\Dbc(\CC_{X^\an})$ we have 
\begin{equation}
\zeta_{h,\infty}(i_!\rmR g_!\SF)(t)
=(1-te^{-2\pi\sqrt{-1}d\alpha})^{(1-d)^{N-1}}.
\end{equation}
Moreover, for any non-zero complex number 
$\lambda\in\CC^\ast$ such that $\lambda\neq1$ and 
the multiplicity $\mu(\SM^\wedge,\lambda)$ of it 
in the monodromy at infinity of $\SM^\wedge$ we have 
\begin{align}
\mu(\SM^\wedge,\lambda) = 
\begin{cases}
\ (d-1)^{N-1} & ( \lambda = e^{{-2\pi\sqrt{-1}d\alpha}}), \\
& \\
\ 0 & (\textit{otherwise}).
\end{cases}
\end{align}     
\end{theorem}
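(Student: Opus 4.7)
The plan is to apply Theorem \ref{MT-2} together with the resolution-theoretic integration formula from the paragraphs immediately after it, and then evaluate the resulting Euler integral by stratifying the hyperplane at infinity. First I would form the blow-up $\nu\colon(\var{X}^\an)^\sim\to\var{X}^\an$ of $\var{X}^\an\simeq\PP^N$ along the codimension-two center $\var{g^{-1}(0)}\cap H^\an$, where $H^\an\simeq\PP^{N-1}$ is the hyperplane at infinity. The hypothesis that $\{f_d=0\}\subset H^\an$ is smooth guarantees that in the blow-up the divisor at infinity $(g')^{-1}(\infty)$ is the proper transform $\tl{H}$ of $H^\an$ and is itself isomorphic to $\PP^{N-1}$ (we blew up the codimension-one subvariety $L_0\coloneq\var{g^{-1}(0)}\cap H^\an$ of $H^\an$); moreover, its intersections with the proper transform $\tl{D_f}$ of $\var{\{f=0\}}\subset\PP^N$ and with the exceptional divisor $E$ are transversal. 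By Theorem \ref{prp:2-99} one then has
\begin{equation*}
\zeta_{h,\infty}(i_!\rmR g_!\SF)(t)=\int_{(g')^{-1}(\infty)}\zeta_{h\circ g'}(\iota_!\SF).
\end{equation*}

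Next I would stratify $(g')^{-1}(\infty)\simeq\PP^{N-1}$ into four locally closed strata: (a) the open part $\PP^{N-1}\setminus(L_0\cup\{f_d=0\})$, (b) $L_0\setminus(L_0\cap\{f_d=0\})$, (c) $\{f_d=0\}\setminus(L_0\cap\{f_d=0\})$, and (d) $L_0\cap\{f_d=0\}$. A local computation in a blow-up chart shows that $h\circ g'$ restricts to a single coordinate $t$, and that $\iota_!\SF$ is the extension by zero of a rank-one local system from the complement of $\{t=0\}\cup\{P=0\}\cup\{\tl{f}=0\}$, where $\{P=0\}$ cuts out $E$ and $\{\tl{f}=0\}$ cuts out $\tl{D_f}$. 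For the strata (b), (c), (d), at least two of these local equations vanish at the base point, so Lemma \ref{lem:2-999} gives $\zeta_{h\circ g'}(\iota_!\SF)=1$ there. On the open stratum (a) only $\{t=0\}$ is relevant, and Lemma \ref{lem:2-99}(ii) produces $1-e^{-2\pi\sqrt{-1}d\alpha}t$, where the monodromy factor comes from the fact that a counterclockwise loop around $\{t=0\}$ in the blow-up chart pulls back in $X^\an$ to a rotation $z\mapsto e^{-i\theta}z$ along which $f(z)\sim f_d(z)$ picks up the rotation $e^{-2\pi\sqrt{-1}d}$, so that the multi-valued section $f^\alpha$ picks up $e^{-2\pi\sqrt{-1}d\alpha}$.

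By Corollary \ref{cpr:2-99}, only stratum (a) contributes, and
\begin{equation*}
\zeta_{h,\infty}(i_!\rmR g_!\SF)(t)=(1-e^{-2\pi\sqrt{-1}d\alpha}t)^{\chi(\text{stratum (a)})}.
\end{equation*}
To finish, I would compute $\chi(\text{stratum (a)})$ by inclusion-exclusion, using Proposition \ref{prp:2-999} for smooth degree $d$ hypersurfaces in $\PP^{N-1}$ and $\PP^{N-2}$; a short algebraic simplification of $\chi(\PP^{N-1})-\chi(L_0)-\chi(\{f_d=0\})+\chi(L_0\cap\{f_d=0\})$ yields $(1-d)^{N-1}$, which proves the first displayed formula. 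The multiplicity statement then follows directly from Theorem \ref{MT-2}: since $(-1)^{N+1}(1-d)^{N-1}=(d-1)^{N-1}$, the rational function $\{\zeta_{h,\infty}(i_!\rmR g_!\SF)\}^{(-1)^{N+1}}$ equals $(1-\lambda_0 t)^{(d-1)^{N-1}}$ with $\lambda_0=e^{-2\pi\sqrt{-1}d\alpha}$, and the factor $(1-\lambda t)$ occurs in it with exactly the multiplicity claimed.

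The main obstacle I foresee is the local monodromy computation for the rank-one local system $L=\CC\cdot f^\alpha$ around the proper transform $\tl{H}$ at a generic point of stratum (a): one must track how the chosen branch of $f^\alpha$ transforms under a small loop around $\{t=0\}$ in the blow-up chart, and confirm that the degree $d$ scaling $f_d(e^{-i\theta}z)=e^{-id\theta}f_d(z)$ (rather than a degree-one effect) is what controls the monodromy, producing $e^{-2\pi\sqrt{-1}d\alpha}$ and not some naive $e^{\pm2\pi\sqrt{-1}\alpha}$. Once this is settled, the stratum-by-stratum application of Lemmas \ref{lem:2-99} and \ref{lem:2-999} and the Euler characteristic arithmetic are routine.
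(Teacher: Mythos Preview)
Your approach is essentially the same as the paper's: blow up along $\var{g^{-1}(0)}\cap H^{\an}$, stratify the proper transform $\tl{H}\simeq\PP^{N-1}$ by its intersections with the exceptional divisor $E$ and the proper transform of $\var{f^{-1}(0)}$, apply the local lemmas to see that only the open stratum contributes, and compute its Euler characteristic by inclusion--exclusion via Proposition~\ref{prp:2-999}. One small correction: on strata (b)--(d) the relevant lemma is Lemma~\ref{lem:2-99}(i) (the case $k\geq2$), not Lemma~\ref{lem:2-999}, since $h\circ g'$ vanishes only along $\tl{H}$ to order one; the zeta function equals $1$ there because the local system is extended by zero across at least two coordinate hyperplanes, not because the defining function is a nontrivial monomial.
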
 

\begin{proof}
Note that for the closure 
$\var{f^{-1}(0)}\subset\var{X}^\an$ of 
$f^{-1}(0)\subset X^\an$ in $\var{X}^\an$ we have 
$\var{f^{-1}(0)} \cap H^\an =
\{f_d=0\}$. Then the smoothness of 
$\{f_d=0\}\subset H^\an\simeq\PP^{N-1}$ is equivalent to 
the condition that the hypersurface $\var{f^{-1}(0)}\subset\var{X}^\an$ 
is smooth in a neighborhood of $H^{\an}$ in $\var{X}^\an$ 
and intersects $H^\an$ transversally. The same is 
true also for the hypersurface $\var{g^{-1}(0)}\subset\var{X}^\an$. 
Moreover, by our definition of $\Omega \subset Y= \CC^N$ and the 
condition $w_0 \in \Omega^{\an}$, we can easily show that 
the divisor 
\begin{equation}
\var{f^{-1}(0)} \cup \var{g^{-1}(0)} \cup H^{\an} 
\qquad \subset \var{X}^\an
\end{equation}
is normal crossing in a neighborhood of $H^{\an}$ in $\var{X}^\an$. 
We denote by $E$ the exceptional divisor 
$\nu^{-1}(\var{g^{-1}(0)}\cap H^\an) \subset Z=(\var{X}^\an)^\sim$ of the 
blow-up $\nu\colon Z=(\var{X}^\an)^\sim\longtwoheadrightarrow\var{X}^\an$ 
of $\var{X}^\an$ along $\var{g^{-1}(0)}\cap H^\an$. 
Recall that $(g^\prime)^{-1}(\infty)\simeq\PP^{N-1}$ is 
the proper transform of $H^\an$ in the blow-up $Z=(\var{X}^\an)^\sim$ 
and denote it by $\tl{H}$. Let $( \var{f^{-1}(0)})^{\sim} \subset Z$ be 
the proper transform of $\var{f^{-1}(0)}$ in $Z$. Then the divisor 
\begin{equation}
( \var{f^{-1}(0)})^{\sim} \cup E \cup \tl{H}
\qquad \subset Z=(\var{X}^\an)^\sim
\end{equation}
is normal crossing in a neighborhood of $\tl{H}$ in $Z$. 
Now let us consider the following two hypersurfaces in $\tl{H}\subset Z$:
\begin{equation}
D_1\coloneq  ( \var{f^{-1}(0)})^{\sim} \cap \tl{H}, \quad
D_2\coloneq E \cap \tl{H}.
\end{equation}
Then $D_1$ and $D_2$ are smooth and intersect transversally in 
$\tl{H} \simeq \PP^{N-1}$. 
Note that the meromorphic extension of $f$
to $Z$ has a pole of order $d>0$ along
the divisor $\tl{H} \subset Z$.
Hence for a point $p\in\tl{H}=(g^\prime)^{-1}(\infty)$ 
by Lemmas \ref{lem:2-99} and \ref{lem:2-999} we obtain 
\begin{align}
\zeta_{h\circ g^\prime,p}(\iota_!\SF)(t) =
\begin{cases}
\ 1-te^{-2\pi\sqrt{-1}d\alpha} & (p\in\tl{H}\bs(D_1\cup D_2)), \\
& \\
\ 1 & (\textit{otherwise}).
\end{cases}
\end{align}      
Moreover by Proposition \ref{prp:2-999} we have $\chi(\tl{H})=N$ and 
\begin{align}
\chi(D_1)&=\frac{1}{d}\{(1-d)^N-1\}+N, \\
\chi(D_2)&=N-1, \\
\chi(D_1\cap D_2)&=\frac{1}{d}\{(1-d)^{N-1}-1\}+(N-1).
\end{align}
Then by Lemma \ref{lem:2-str-1} we obtain 
\begin{align}
\chi(\tl{H}\bs(D_1\cup D_2)) &=\chi_\rmc(\tl{H}\bs(D_1\cup D_2)) \\
&=\chi(\tl{H})-\chi(D_1)-\chi(D_2)+\chi(D_1\cap D_2) \\
&=(1-d)^{N-1}
\end{align}
and hence the first assertion 
\begin{align}
\zeta_{h,\infty}(i_!\rmR g_!\SF)(t)&=
\int_{(g^\prime)^{-1}(\infty)}\zeta_{h\circ g^\prime}(\iota_!\SF) \\
&=(1-te^{-2\pi\sqrt{-1}d\alpha})^{\chi(\tl{H}\bs(D_1\cup D_2))} \\
&=(1-te^{-2\pi\sqrt{-1}d\alpha})^{(1-d)^{N-1}}.
\end{align}
By Theorem \ref{MT-2} the second assertion immediately follows from the first one.
\end{proof}

Finally, let us give some examples to which Theorem \ref{MT-3} does not apply.
\begin{example}
\begin{enumerate}
\item [\rm (i)]
Let $N=2$ and $f(z_1,z_2)=z_1^3+z_1^2z_2-z_2\in\CC[z_1,z_2]$.
We consider the regular meromorphic connection 
$\SM\in\Modrh(\SD_X)$ associated to $f$ and $\alpha\in\CC$ (as in Theorem \ref{MT-3}) 
and the solution complex $\SF\coloneq Sol_X(\SM)\in\Dbc(\CC_{X^\an})$ of it. 
However the hypersurface $\{f_d=0\}\subset H^\an\simeq\PP^1$ is not smooth 
and we cannot apply Theorem \ref{MT-3} to this case. 
By an explicit calculation we see that the covering degree of 
$\Char(\SM)\cap q^{-1}(\Omega)\longrightarrow\Omega$ is 4.
In what follows, we use  the notations in the proof of Theorem \ref{MT-3}.
Note that $\var{f^{-1}(0)}\cap H^\an$ and $\var{g^{-1}(0)}\cap H^\an$ do not intersect.
Recall that $\nu\colon Z=(\var{X}^\an)^\sim\longtwoheadrightarrow\var{X}^\an$ is the blow-up of $\var{X}^\an$ along $\var{g^{-1}(0)}\cap H^\an$.
Then $D_1=(\var{f^{-1}(0)})^\sim\cap \tl{H}$ consists of two points 
and its singular locus consists of a single point that we denote by $P$. 
Let $\rho\colon T\longtwoheadrightarrow Z=(\var{X}^\an)^\sim$ 
be the blow-up of $Z$ along $P$ and $\iota^\prime\colon X^\an\longhookrightarrow T$ the inclusion map.
Denote by $E_P$ the exceptional divisor of it 
and by $(\var{f^{-1}(0)})^\approx$ and $\tl{H}^\sim$ 
the proper transforms of $({f^{-1}(0)})^\sim$ and $\tl{H}$ in $T$, 
respectively (see Figure \ref{fig:bl}).
\begin{figure}[b]
\centering
\begin{tikzpicture}[scale=0.5]
\draw (0,0) node{$\longleftarrow$} node[below]{$\rho$};
\begin{scope}[xshift=-6cm]
\begin{scope}
\coordinate (O) at(0,0);
\draw[name path=S1] (O) circle[radius=2.5];
\draw[name path=L1] (0,-0.75)--(-2,-3.2); 
\path [name intersections={of=S1 and L1,by=P0}];
\fill[white] (P0) circle[radius=0.105];
\draw (P0) circle[radius=0.105];
\begin{scope}[xshift=-1cm,yshift=2.4cm,scale=1.75,rotate=-7.5]
\draw[name path=C1] (0,-1)..controls (0.6,1.25) and (1.6,1.1)..(1.5,0);
\draw[name path=C2] (0.17,1)..controls (0.6,-1.25) and (1.4,-0.8)..(1.5,0);
\path [name intersections={of=C1 and C2,by=P1}];
\draw (-0.05,0.25) node{$P$};
\fill[white] (P1) circle[radius=0.06];
\draw (P1) circle[radius=0.06];
\path [name intersections={of=S1 and C2,by=P2}];
\fill[white] (P2) circle[radius=0.06];
\draw (P2) circle[radius=0.06];
\end{scope}
\draw (1.9,-2.4) node{$\tl{H}$};
\draw (-1,-2.9) node{$D_2$};
\draw (2.85,4.1) node{$(\var{f^{-1}(0)})^\sim$};
\draw (-2.5,-3.6) node{$E$};
\node[draw,outer sep=0pt] at(-3.5,3.5) {$Z$};
\end{scope}
\end{scope}
\begin{scope}[xshift=6cm]
\begin{scope}
\coordinate (O) at(0,0);
\draw[name path=S1] (O) circle[radius=2.5];
\draw[name path=L1] (0,-0.75)--(-2,-3.2); 
\path [name intersections={of=S1 and L1,by=P0}];
\fill[white] (P0) circle[radius=0.105];
\draw (P0) circle[radius=0.105];
\begin{scope}[xshift=-1cm,yshift=2.4cm,scale=1.75,rotate=-7.5]
\draw[name path=C1,white] (0,-1)..controls (0.6,1.25) and (1.6,1.1)..(1.5,0);
\draw[name path=C2,white] (0.17,1)..controls (0.6,-1.25) and (1.4,-0.8)..(1.5,0);
\draw[name path=L2] (0.15,-1)--(0.65,1.2);
\draw[name path=C3] (0.17,0.8)..controls (0.6,0.75) and (1.6,0.7)..(1.5,0);
\draw[name path=C4] (0,-0.6)..controls (0.6,-0.7) and (1.4,-0.6)..(1.5,0);
\path [name intersections={of=C1 and C2,by=P1}];
\fill[white] (P1) circle[radius=0.06];
\draw (P1) circle[radius=0.06];
\path [name intersections={of=S1 and C2,by=P2}];
\fill[white] (P2) circle[radius=0.06];
\draw (P2) circle[radius=0.06];
\path [name intersections={of=L2 and C3,by=P3}];
\fill[white] (P3) circle[radius=0.06];
\draw (P3) circle[radius=0.06];
\path [name intersections={of=L2 and C4,by=P4}];
\fill[white] (P4) circle[radius=0.06];
\draw (P4) circle[radius=0.06];
\draw (0.2,-1.3) node{$E_P$};
\end{scope}
\draw (2.1,-2.4) node{$\tl{H}^\sim$};
\draw (-1,-2.9) node{$D_2$};
\draw (2.7,3.8) node{$(\var{f^{-1}(0)})^\approx$};
\draw (-2.5,-3.6) node{$E$};
\node[draw,outer sep=0pt] at(-3.5,3.5) {$T$};
\end{scope}
\end{scope}
\end{tikzpicture}
\caption{The blow-up $\rho$ of $Z$ along $P$.}
\label{fig:bl}
\end{figure}
For a point $p\in\tl{H}^\sim\cup E_P=(g^\prime\circ\rho)^{-1}(\infty)$ 
by Lemmas \ref{lem:2-99} and \ref{lem:2-999} we obtain 
\begin{align}
\zeta_{h\circ g^\prime\circ\rho,p}(\iota^\prime_!\SF)(t) =
\begin{cases}
\ 1-te^{-6\pi\sqrt{-1}\alpha} 
& (p\in\tl{H}^\sim\bs((\var{f^{-1}(0)})^\approx\cup E_P\cup D_2)), \\
\ 1-te^{-2\pi\sqrt{-1}\alpha} 
& (p\in E_P\bs((\var{f^{-1}(0)})^\approx\cup \tl{H}^\sim)), \\
\ 1 & (\textit{otherwise}).
\end{cases}
\end{align} 
Moreover we have 
\begin{equation}
\chi\(\tl{H}^\sim\bs((\var{f^{-1}(0)})
^\approx\cup E_P\cup D_2)\)
=\chi\(E_P\bs((\var{f^{-1}(0)})^\approx\cup \tl{H}^\sim)\)=-1
\end{equation}
and hence 
\begin{equation}
\zeta_{h,\infty}(i_!\rmR g_!\SF)(t)=
(1-te^{-2\pi\sqrt{-1}\alpha})^{-1}(1-te^{-6\pi\sqrt{-1}\alpha})^{-1}.
\end{equation}
By Theorem \ref{MT-2}, for any non-zero complex number $\lambda\in\CC^\ast$ such that $\lambda\neq1$ we have
\begin{align}
\mu(\SM^\wedge,\lambda) = 
\begin{cases}
\ 1 & ( \lambda = e^{{-2\pi\sqrt{-1}\alpha}}, e^{{-6\pi\sqrt{-1}\alpha}}), \\
& \\
\ 0 & (\textit{otherwise}).
\end{cases}
\end{align}        
\item [\rm (ii)] 
For $N=2$, $f(z_1,z_2)=z_1^3+z_2\in\CC[z_1,z_2]$ and $\alpha\in\CC$, 
let us consider the regular meromorphic connection $\SM\in\Modrh(\SD_X)$ defined as in (i).
In this case, the hypersurface $\{f_d=0\}$ is not smooth 
and the covering degree of 
$\Char(\SM)\cap q^{-1}(\Omega)\longrightarrow\Omega$ is 2.
Similarly to the case (i), we can see that 
\begin{align}
\zeta_{h,\infty}(i_!\rmR g_!\SF)(t)&=
(1-te^{-2\pi\sqrt{-1}\alpha})(1-t^3e^{-6\pi\sqrt{-1}\alpha})^{-1} \\
&=(1+te^{-2\pi\sqrt{-1}\alpha}+t^2e^{-4\pi\sqrt{-1}\alpha})^{-1}.
\end{align}  
It follows from Theorem \ref{MT-2} that for any non-zero complex number $\lambda\in\CC^\ast$ such that $\lambda\neq1$ we have
\begin{align}
\mu(\SM^\wedge,\lambda) = 
\begin{cases}
\ 1 & ( \lambda = \beta_1, \beta_2), \\
& \\
\ 0 & (\textit{otherwise}),
\end{cases}
\end{align}        
where $\beta_1,\beta_2\in\CC^\ast$ are the non-zero complex numbers such that $\beta_1+\beta_2=-e^{{-2\pi\sqrt{-1}\alpha}}$ and $\beta_1\beta_2=e^{{-4\pi\sqrt{-1}\alpha}}$.
\end{enumerate}
\end{example}

\begin{bibdiv}
\begin{biblist}

\bib{A'Campo}{article}{
   author={A'Campo, Norbert},
   title={La fonction z\^eta d'une monodromie},
   language={French},
   journal={Comment. Math. Helv.},
   volume={50},
   date={1975},
   pages={233--248},
}

\bib{AET15}{article}{
   author={Ando, Kana},
   author={Esterov, Alexander},
   author={Takeuchi, Kiyoshi},
   title={Monodromies at infinity of confluent $A$-hypergeometric functions},
   journal={Adv. Math.},
   volume={272},
   date={2015},
   pages={1--19},
}

\bib{BHHS23}{article}{
   author={Barco, Davide},
   author={Hien, Marco},
   author={Hohl, Andreas},
   author={Sevenheck, Christian},
   title={Betti structures of hypergeometric equations},
   journal={Int. Math. Res. Not. IMRN},
   date={2023},
   number={12},
   pages={10641--10701},
}

\bib{BE04b}{article}{
   author={Bloch, Spencer},
   author={Esnault, H\'{e}l\`ene},
   title={Local Fourier transforms and rigidity for $\scr D$-modules},
   journal={Asian J. Math.},
   volume={8},
   date={2004},
   number={4},
   pages={587--605},
}

\bib{Bry86}{article}{
   author={Brylinski, Jean-Luc},
   title={Transformations canoniques, dualit\'{e} projective, th\'{e}orie de
   Lefschetz, transformations de Fourier et sommes trigonom\'{e}triques},
   note={G\'{e}om\'{e}trie et analyse microlocales},
   journal={Ast\'{e}risque},
   number={140-141},
   date={1986},
   pages={3--134, 251},
}

\bib{CDS21}{article}{
   author={Casta\~no Dom\'inguez, Alberto},
   author={Sevenheck, Christian},
   title={Irregular Hodge filtration of some confluent hypergeometric
   systems},
   journal={J. Inst. Math. Jussieu},
   volume={20},
   date={2021},
   number={2},
   pages={627--668},
}

\bib{DHMS20}{article}{
   author={D'Agnolo, Andrea},
   author={Hien, Marco},
   author={Morando, Giovanni},
   author={Sabbah, Claude},
   title={Topological computation of some Stokes phenomena on the affine
   line},
   journal={Ann. Inst. Fourier},
   volume={70},
   date={2020},
   number={2},
   pages={739--808},
}

\bib{DK16}{article}{
   author={D'Agnolo, Andrea},
   author={Kashiwara, Masaki},
   title={Riemann-Hilbert correspondence for holonomic D-modules},
   journal={Publ. Math. Inst. Hautes \'{E}tudes Sci.},
   volume={123},
   date={2016},
   pages={69--197},
}

\bib{DK18}{article}{
   author={D'Agnolo, Andrea},
   author={Kashiwara, Masaki},
   title={A microlocal approach to the enhanced Fourier-Sato transform in
   dimension one},
   journal={Adv. Math.},
   volume={339},
   date={2018},
   pages={1--59},
}

\bib{DK23}{article}{
   author={D'Agnolo, Andrea},
   author={Kashiwara, Masaki},
   title={Enhanced nearby and vanishing cycles in dimension one and Fourier
   transform},
   journal={Publ. Res. Inst. Math. Sci.},
   volume={59},
   date={2023},
   number={3},
   pages={543--570},
}

\bib{Dai00}{article}{
   author={Daia, Liviu},
   title={La transformation de Fourier pour les $\scr D$-modules},
   journal={Ann. Inst. Fourier},
   volume={50},
   date={2000},
   number={6},
   pages={1891--1944},
}

\bib{Dim92}{book}{
   author={Dimca, Alexandru},
   title={Singularities and topology of hypersurfaces},
   series={Universitext},
   publisher={Springer-Verlag, New York},
   date={1992},
   pages={xvi+263},
}

\bib{Dim04}{book}{
   author={Dimca, Alexandru},
   title={Sheaves in topology},
   series={Universitext},
   publisher={Springer-Verlag, Berlin},
   date={2004},
   pages={xvi+236},
}

\bib{DM89}{article}{
   author={Duval, Anne},
   author={Mitschi, Claude},
   title={Matrices de Stokes et groupe de Galois des \'equations
   hyperg\'eom\'etriques confluentes g\'en\'eralis\'ees},
   language={French},
   journal={Pacific J. Math.},
   volume={138},
   date={1989},
   number={1},
   pages={25--56},
}

\bib{Fan09}{article}{
   author={Fang, JiangXue},
   title={Calculation of local Fourier transforms for formal connections},
   journal={Sci. China Ser. A},
   volume={52},
   date={2009},
   number={10},
   pages={2195--2206},
}

\bib{Fed18}{article}{
   author={Fedorov, Roman},
   title={Variations of Hodge structures for hypergeometric differential
   operators and parabolic Higgs bundles},
   journal={Int. Math. Res. Not. IMRN},
   date={2018},
   number={18},
   pages={5583--5608},
}

\bib{Gar04}{article}{
   author={Garc\'{\i}a L\'{o}pez, Ricardo},
   title={Microlocalization and stationary phase},
   journal={Asian J. Math.},
   volume={8},
   date={2004},
   number={4},
   pages={747--768},
}

\bib{Gra13}{article}{
   author={Graham-Squire, Adam},
   title={Calculation of local formal Fourier transforms},
   journal={Ark. Mat.},
   volume={51},
   date={2013},
   number={1},
   pages={71--84},
}

\bib{GLM00}{article}{
   author={Gusein-Zade, Sabir M.},
   author={Luengo, Ignacio},
   author={Melle-Hern\'andez, Alejandro},
   title={On the zeta-function of a polynomial at infinity},
   journal={Bull. Sci. Math.},
   volume={124},
   date={2000},
   number={3},
   pages={213--224},
}

\bib{Hie22}{article}{
   author={Hien, Marco},
   title={Stokes matrices for confluent hypergeometric equations},
   journal={Int. Math. Res. Not. IMRN},
   date={2022},
   number={5},
   pages={3511--3560},
}

\bib{HS15}{article}{
   author={Hien, Marco},
   author={Sabbah, Claude},
   title={The local Laplace transform of an elementary irregular meromorphic
   connection},
   journal={Rend. Semin. Mat. Univ. Padova},
   volume={134},
   date={2015},
   pages={133--196},
}

\bib{Hoh22}{article}{
   author={Hohl, Andreas},
   title={D-modules of pure Gaussian type and enhanced ind-sheaves},
   journal={Manuscripta Math.},
   volume={167},
   date={2022},
   number={3-4},
   pages={435--467},
}

\bib{HTT08}{book}{
   author={Hotta, Ryoshi},
   author={Takeuchi, Kiyoshi},
   author={Tanisaki, Toshiyuki},
   title={$D$-modules, perverse sheaves, and representation theory},
   series={Progress in Mathematics},
   volume={236},
   edition={Japanese edition},
   publisher={Birkh\"{a}user Boston, Inc., Boston, MA},
   date={2008},
   pages={xii+407},
}

\bib{IT20a}{article}{
   author={Ito, Yohei},
   author={Takeuchi, Kiyoshi},
   title={On irregularities of Fourier transforms of regular holonomic
   D-modules},
   journal={Adv. Math.},
   volume={366},
   date={2020},
   pages={107093, 62},
}

\bib{IT20b}{article}{
   author={Ito, Yohei},
   author={Takeuchi, Kiyoshi},
   title={On some topological properties of Fourier transforms of regular
   holonomic D-modules},
   journal={Canad. Math. Bull.},
   volume={63},
   date={2020},
   number={2},
   pages={454--468},
}

\bib{Kas03}{book}{
   author={Kashiwara, Masaki},
   title={$D$-modules and microlocal calculus},
   series={Translations of Mathematical Monographs},
   volume={217},
   note={Translated from the 2000 Japanese original by Mutsumi Saito;
   Iwanami Series in Modern Mathematics},
   publisher={American Mathematical Society, Providence, RI},
   date={2003},
   pages={xvi+254},
}

\bib{KS90}{book}{
   author={Kashiwara, Masaki},
   author={Schapira, Pierre},
   title={Sheaves on manifolds},
   series={Grundlehren der mathematischen Wissenschaften},
   volume={292},
   note={With a chapter in French by Christian Houzel},
   publisher={Springer-Verlag, Berlin},
   date={1990},
   pages={x+512},
}

\bib{KS97}{article}{
   author={Kashiwara, Masaki},
   author={Schapira, Pierre},
   title={Integral transforms with exponential kernels and Laplace
   transform},
   journal={J. Amer. Math. Soc.},
   volume={10},
   date={1997},
   number={4},
   pages={939--972},
}

\bib{KS01}{article}{
   author={Kashiwara, Masaki},
   author={Schapira, Pierre},
   title={Ind-sheaves},
   journal={Ast\'{e}risque},
   number={271},
   date={2001},
   pages={136},
}

\bib{KS16a}{article}{
   author={Kashiwara, Masaki},
   author={Schapira, Pierre},
   title={Irregular holonomic kernels and Laplace transform},
   journal={Selecta Math.},
   volume={22},
   date={2016},
   number={1},
   pages={55--109},
}

\bib{KS16b}{book}{
   author={Kashiwara, Masaki},
   author={Schapira, Pierre},
   title={Regular and irregular holonomic D-modules},
   series={London Mathematical Society Lecture Note Series},
   volume={433},
   publisher={Cambridge University Press, Cambridge},
   date={2016},
   pages={vi+111},
}

\bib{Kat90}{book}{
   author={Katz, Nicholas M.},
   title={Exponential sums and differential equations},
   series={Annals of Mathematics Studies},
   volume={124},
   publisher={Princeton University Press, Princeton, NJ},
   date={1990},
   pages={xii+430},
}

\bib{KT23}{arXiv}{
   title={A Morse theoretical approach to Fourier transforms
   of holonomic D-modules in dimension one
   }, 
   author={Kudomi, Kazuki},
   author={Takeuchi, Kiyoshi},
   year={2023},
   eprint={2311.17395.},
}

\bib{LS95}{article}{
   author={Libgober, A.},
   author={Sperber, S.},
   title={On the zeta function of monodromy of a polynomial map},
   journal={Compositio Math.},
   volume={95},
   date={1995},
   number={3},
   pages={287--307},
}

\bib{Mal88}{article}{
   author={Malgrange, Bernard},
   title={Transformation de Fourier g\'{e}ometrique},
   note={S\'{e}minaire Bourbaki, Vol. 1987/88},
   journal={Ast\'{e}risque},
   number={161-162},
   date={1988},
   pages={Exp. No. 692, 4, 133--150},
}

\bib{Mal91}{book}{
   author={Malgrange, Bernard},
   title={\'{E}quations diff\'{e}rentielles \`a coefficients polynomiaux},
   series={Progress in Mathematics},
   volume={96},
   publisher={Birkh\"{a}user Boston, Inc., Boston, MA},
   date={1991},
   pages={vi+232},
}

\bib{MT11a}{article}{
   author={Matsui, Yutaka},
   author={Takeuchi, Kiyoshi},
   title={Milnor fibers over singular toric varieties and nearby cycle
   sheaves},
   journal={Tohoku Math. J. (2)},
   volume={63},
   date={2011},
   number={1},
   pages={113--136},
}

\bib{MT11b}{article}{
   author={Matsui, Yutaka},
   author={Takeuchi, Kiyoshi},
   title={Monodromy zeta functions at infinity, Newton polyhedra and
   constructible sheaves},
   journal={Math. Z.},
   volume={268},
   date={2011},
   number={1-2},
   pages={409--439},
}

\bib{MT13}{article}{
   author={Matsui, Yutaka},
   author={Takeuchi, Kiyoshi},
   title={Monodromy at infinity of polynomial maps and Newton polyhedra
   (with an appendix by C. Sabbah)},
   journal={Int. Math. Res. Not. IMRN},
   date={2013},
   number={8},
   pages={1691--1746},
}

\bib{Mochi10}{article}{
   author={Mochizuki, Takuro},
   title={Note on the Stokes structure of Fourier transform},
   journal={Acta Math. Vietnam.},
   volume={35},
   date={2010},
   number={1},
   pages={107--158},
}

\bib{Mochi18}{arXiv}{
    title={Stokes shells and Fourier transforms}, 
    author={Mochizuki, Takuro},
    year={2018},
    eprint={1808.01037.},
}

\bib{Mochi22}{article}{
   author={Mochizuki, Takuro},
   title={Curve test for enhanced ind-sheaves and holonomic $D$-modules, I},
   journal={Ann. Sci. \'{E}c. Norm. Sup\'{e}r. (4)},
   volume={55},
   date={2022},
   number={3},
   pages={575--679},
}

\bib{Oka97}{book}{
   author={Oka, Mutsuo},
   title={Non-degenerate complete intersection singularity},
   series={Actualit\'es Math\'ematiques. [Current Mathematical Topics]},
   publisher={Hermann, Paris},
   date={1997},
   pages={viii+309},
}

\bib{Sab93}{article}{
   author={Sabbah, Claude},
   title={Introduction to algebraic theory of linear systems of differential
   equations},
   conference={
      title={\'{E}l\'{e}ments de la th\'{e}orie des syst\`emes
      diff\'{e}rentiels. $\scr D$-modules coh\'{e}rents et holonomes},
      address={Nice},
      date={1990},
   },
   book={
      series={Travaux en Cours},
      volume={45},
      publisher={Hermann, Paris},
   },
   date={1993},
   pages={1--80},
}

\bib{Sab08}{article}{
   author={Sabbah, Claude},
   title={An explicit stationary phase formula for the local formal
   Fourier-Laplace transform},
   conference={
      title={Singularities I},
   },
   book={
      series={Contemp. Math.},
      volume={474},
      publisher={Amer. Math. Soc., Providence, RI},
   },
   isbn={978-0-8218-4458-8},
   date={2008},
   pages={309--330},
}

\bib{Sch03}{book}{
   author={Sch\"urmann, J\"org},
   title={Topology of singular spaces and constructible sheaves},
   series={Instytut Matematyczny Polskiej Akademii Nauk. Monografie
   Matematyczne (New Series) [Mathematics Institute of the Polish Academy of
   Sciences. Mathematical Monographs (New Series)]},
   volume={63},
   publisher={Birkh\"auser Verlag, Basel},
   date={2003},
   pages={x+452},
}

\bib{Tak10}{article}{
   author={Takeuchi, Kiyoshi},
   title={Monodromy at infinity of $A$-hypergeometric functions and toric
   compactifications},
   journal={Math. Ann.},
   volume={348},
   date={2010},
   number={4},
   pages={815--831},
}

\bib{Tak22}{arXiv}{
    title={Fourier transforms of irregular holonomic D-modules, 
singularities at infinity of meromorphic functions and irregular characteristic cycles}, 
    author={Takeuchi, Kiyoshi},
    year={2022},
    eprint={2211.04113.},
}

\bib{Tak23}{arXiv}{
    title={Geometric monodromies, mixed Hodge numbers of
    motivic Milnor fibers and Newton polyhedra}, 
    author={Takeuchi, Kiyoshi},
    year={2023},
    eprint={2308.09418., to appear in the Volume VII of
    the Handbook of Geometry and Topology of Singularities.},
}

\bib{Var76}{article}{
   author={Varchenko, A. N.},
   title={Zeta-function of monodromy and Newton's diagram},
   journal={Invent. Math.},
   volume={37},
   date={1976},
   number={3},
   pages={253--262},
}

\bib{Wasow}{book}{
   author={Wasow, Wolfgang},
   title={Asymptotic expansions for ordinary differential equations},
   series={Pure and Applied Mathematics},
   volume={Vol. XIV},
   publisher={Interscience Publishers John Wiley \& Sons, Inc., New
   York-London-Sydney},
   date={1965},
   pages={ix+362},
}
\end{biblist}
\end{bibdiv}

\end{document}